% !TEX TS-program = pdflatexmk
\PassOptionsToPackage{shortlabels}{enumitem}
\documentclass[bj,authoryear,noshowframe]{imsart}
\usepackage{imsart}
\usepackage[utf8]{inputenc}
\usepackage[T1]{fontenc}
\usepackage{graphicx}
\usepackage{booktabs}
\usepackage{amsmath,amsthm}
\usepackage{amssymb}
\usepackage{xspace}
\usepackage{bbding}
\usepackage{caption}
\usepackage{placeins} 
\usepackage{pifont}  % In your preamble
\usepackage{bbding}
\usepackage{makecell}
% In your document:
% change it to natbib, which compiles MUCH faster on my machine ....
%\RequirePackage[round]{natbib}

%\usepackage[colorlinks,citecolor=blue,urlcolor=blue,linkcolor=blue,breaklinks=true]{hyperref}
%\usepackage{pdflscape}
\usepackage{mathtools,mathrsfs,bm}
\usepackage{tabularx,array}
\usepackage[capitalise]{cleveref}

\usepackage[mathscr]{euscript}   % try \mathscr{G}

\usepackage{subcaption}

\usepackage[ruled,vlined]{algorithm2e}

\usepackage{tikz}
\usetikzlibrary{positioning,calc,cd,decorations.pathmorphing,shapes,automata,arrows}
\tikzset{rv/.style={circle,inner sep=1pt,fill=color1,draw,font=\sffamily},
lv/.style={circle,inner sep=1pt,fill=gray!50,draw,font=\sffamily},
fv/.style={rectangle,inner sep=1.5pt,fill=gray!20,draw,font=\sffamily},
>=stealth}

%\usepackage{todonotes}
%\setuptodonotes{inline, inlinewidth=1.5cm, color=green!30, noinlinepar, size=\footnotesize}

% imsart's [bj] option forces natbib's sort&compress, which reorders
% multi-key \citep into reference-list order. Disable it so citations
% keep the order they are typed.
\makeatletter
\def\NAT@sort{\z@}
\def\NAT@cmprs{\z@}
\makeatother

\startlocaldefs
%%%%%%%%%%%%%%%%%%%%%%%%%%%%%%%%%%%%%%%%%%%%%%
%%                                          %%
%% For Axiom, Claim, Corollary, Hypothesis, %%
%% Lemma, Theorem, Proposition              %%
%% use \theoremstyle{plain}                 %%
%%                                          %%
%%%%%%%%%%%%%%%%%%%%%%%%%%%%%%%%%%%%%%%%%%%%%%
\theoremstyle{plain}
\newtheorem{theorem}{Theorem}
\newtheorem{proposition}{Proposition}
\newtheorem{lemma}{Lemma}
\newtheorem{corollary}{Corollary}

%%%%%%%%%%%%%%%%%%%%%%%%%%%%%%%%%%%%%%%%%%%%%%
%%                                          %%
%% For Assumption, Definition, Example,     %%
%% Notation, Property, Remark, Fact         %%
%% use \theoremstyle{definition}            %%
%%                                          %%
%%%%%%%%%%%%%%%%%%%%%%%%%%%%%%%%%%%%%%%%%%%%%%
\theoremstyle{definition}
\newtheorem{assumption}{Assumption}
\newtheorem{condition}{Condition}
\newtheorem{definition}{Definition}
\newtheorem{remark}{Remark}
\newtheorem{example}{Example}

\crefname{condition}{Condition}{Conditions}
\Crefname{condition}{Condition}{Conditions}

\crefname{algocf}{Algorithm}{Algorithms}
\Crefname{algocf}{Algorithm}{Algorithms}

% alternative condition

\crefname{conditionalt}{Condition}{Conditions}
\Crefname{conditionalt}{Condition}{Conditions}

\newenvironment{conditionp}[1]{
  
  \conditionalt
}{\endconditionalt}

%%% User-defined macros should be placed here, but keep them to a minimum. ------
\def\T{{ \mathrm{\scriptscriptstyle T} }}

\newcommand{\Lp}{\mathcal{L}}

% models
\newcommand{\M}{\mathcal{M}}
\newcommand{\Mo}{\M^{\mathrm{o}}}
\newcommand{\Mc}{\M^{\mathrm{c}}}
\newcommand{\Mug}{\Mo_{\textsc{ug}}}
\newcommand{\Mcpdag}{\Mo_{\textsc{cpdag}}}
\newcommand{\Mmpdag}{\Mc_{\textsc{mpdag}}}
\newcommand{\Mdag}{\Mc_{\textsc{dag}}}
\newcommand{\Madmg}{\Mc_{\textsc{admg}}}

% graphs
\newcommand{\g}{\mathfrak{G}}

\DeclareMathOperator{\Ch}{Ch}

\DeclareMathOperator{\Pa}{Pa}

\DeclareMathOperator{\Sib}{Sib}
\DeclareMathOperator{\Nb}{N}
\DeclareMathOperator{\sk}{sk}
\DeclareMathOperator{\vstr}{v-str}
\DeclareMathOperator{\OP}{OP}

\newcommand{\poly}{\mathrm{poly}}

% paths
\newcommand{\du}{\downarrow\uparrow}
\newcommand{\ud}{\uparrow\downarrow}
% ---- or ---
%\newcommand{\du}{\curlyvee}
%\newcommand{\ud}{\curlywedge}

% misc

\newcommand{\indep}{\mathrel{\text{\scalebox{1.07}{$\perp\mkern-10mu\perp$}}}}

\definecolor{applegreen}{rgb}{0.55, 0.71, 0.0}
\definecolor{color1}{RGB}{184, 198, 228}

\renewcommand{\checkmark}{\ding{51}}
\newcommand\crossmark{\ding{55}}

% sets 
\newcommand{\sV}{\mathrm{V}}
\newcommand{\sA}{\mathrm{A}}
\newcommand{\sB}{\mathrm{B}}
\newcommand{\sC}{\mathrm{C}}
\newcommand{\sE}{\mathrm{E}}
\newcommand{\sF}{\mathrm{F}}
\newcommand{\sL}{\mathrm{L}}
\newcommand{\sI}{\mathrm{I}}
\newcommand{\sW}{\mathrm{W}}
\newcommand{\sU}{\mathrm{U}}
\newcommand{\sP}{\mathrm{P}}
\newcommand{\sS}{\mathrm{S}}
\newcommand{\sT}{\mathrm{T}}

% comments

% re-defines 
\let \epsilon \varepsilon

\newcommand{\G}{\mathcal{G}}
\newcommand{\B}{\mathcal{B}}
\renewcommand{\H}{\mathcal{H}}
\newcommand{\D}{\mathcal{D}}

\newcommand{\N}{\mathcal{N}}

\newcommand{\Glst}{\G_{\hat{0}}}
\newcommand{\Ggr}{\G_{\hat{1}}}

\newcommand{\coverby}{\lessdot}
\newcommand{\cover}{\gtrdot}
\newcommand{\join}{\vee}
\newcommand{\meet}{\wedge}

\DeclareMathOperator{\E}{\mathbb{E}}

\newcommand*\dd{\mathop{}\!\mathrm{d}}
\DeclareMathOperator{\rank}{\mathrm{rank}}
\DeclareMathOperator{\pseudo}{\mathrm{pseudo}}
\DeclareMathOperator{\pseudorank}{\mathrm{pseudo-rank}}
\DeclareMathOperator{\SHD}{\mathrm{SHD}}

% Number things with S in the Appendix

\newcommand{\appendixnumbering}{%
  \renewcommand{\thetheorem}{S\arabic{theorem}}%
  \renewcommand{\thealgocf}{S\arabic{algocf}}%
  \renewcommand{\thelemma}{S\arabic{lemma}}%
  \renewcommand{\theexample}{S\arabic{example}}%
  \renewcommand{\theproperty}{S\arabic{property}}%
  \renewcommand{\theassumption}{S\arabic{assumption}}%
  \renewcommand{\theproposition}{S\arabic{proposition}}%
  \renewcommand{\thecorollary}{S\arabic{corollary}}%
  \renewcommand{\thedefinition}{S\arabic{definition}}%
  \renewcommand{\thefigure}{S\arabic{figure}}%
  \renewcommand{\thetable}{S\arabic{table}}%
}

%%% User-defined macros should be placed here, but keep them to a minimum. ---------------------
\endlocaldefs

\begin{document}

\begin{frontmatter}
\title{Model-oriented graph distances via partially ordered sets}
\runtitle{Model-oriented graph distances via posets}

\begin{aug}
%%%%%%%%%%%%%%%%%%%%%%%%%%%%%%%%%%%%%%%%%%%%%%
%% Authors
%%%%%%%%%%%%%%%%%%%%%%%%%%%%%%%%%%%%%%%%%%%%%%
\author[A]{\inits{A.}\fnms{Armeen}~\snm{Taeb}\thanksref{t1}\ead[label=e1]{ataeb@uw.edu}\orcid{0000-0002-5647-3160}}
\author[B]{\inits{F.~R.}\fnms{F.~Richard}~\snm{Guo}\thanksref{t1}\ead[label=e2]{ricguo@umich.edu}\orcid{0000-0002-2081-7398}}
\author[C]{\inits{L.}\fnms{Leonard}~\snm{Henckel}\thanksref{t1}\ead[label=e3]{leonard.henckel@ucd.ie}\orcid{0000-0002-1000-3622}}

\runauthor{A. Taeb, F.~R. Guo and L. Henckel}

%%%%%%%%%%%%%%%%%%%%%%%%%%%%%%%%%%%%%%%%%%%%%%
%% Addresses
%%%%%%%%%%%%%%%%%%%%%%%%%%%%%%%%%%%%%%%%%%%%%%
\address[A]{Department of Statistics, University of Washington\printead[presep={,\ }]{e1}}
\address[B]{Department of Statistics, University of Michigan\printead[presep={,\ }]{e2}}
\address[C]{School of Mathematics and Statistics, University College Dublin\printead[presep={,\ }]{e3}}

%%%%%%%%%%%%%%%%%%%%%%%%%%%%%%%%%%%%%%%%%%%%%%
%% Equal contribution note
%%%%%%%%%%%%%%%%%%%%%%%%%%%%%%%%%%%%%%%%%%%%%%
\thankstext{t1}{All authors contributed equally to this work.}

\end{aug}

\begin{abstract}
A well-defined distance on the parameter space is key to evaluating estimators, ensuring consistency, and building confidence sets. While there are typically standard distances to adopt in a continuous space, this is not the case for combinatorial parameters such as graphs that represent statistical models. 
Defined on the graphs alone, existing proposals like the structural Hamming distance ignore the structure of the model space and can thus exhibit undesirable behaviors. 
We propose a model-oriented framework for defining the distance between graphs that is applicable across different graph classes. 
Our approach treats each graph as a statistical model and organizes the graphs in a partially ordered set based on model inclusion. 
This induces a neighborhood structure, from which we define the model-oriented distance as the length of a shortest path through neighbors, yielding a metric in the space of graphs. 
We apply this framework to probabilistic undirected graphs, causal directed acyclic graphs, {causal acyclic directed mixed graphs}, probabilistic completed partially directed acyclic graphs, and causal maximally oriented partially directed acyclic graphs.
We analyze theoretical and empirical behaviors of the model-oriented distance. By exploiting the underlying poset structures, we develop algorithms for computing and bounding the proposed distance that scale to moderate-sized graphs. 
Finally, we showcase its utility for quantifying the robustness of adjustment sets to errors in specifying the causal graph. 
\end{abstract}

\begin{keyword}[class=MSC]
\kwd[Primary ]{62H22}
\kwd[; secondary ]{06A06}
\end{keyword}

\begin{keyword}
\kwd{Graphical models}
\kwd{causal graphs}
\kwd{metrics}
\kwd{order theory}
\kwd{structural Hamming distance}
\end{keyword}

\end{frontmatter}

\section{Introduction} \label{sec:intro}

Graphs provide an intuitive framework for encoding and reasoning about the dependency structure among a set of variables. 
In statistics, they have become popular in two closely related but different contexts: \emph{probabilistic} modeling and \emph{causal} modeling. 
For the former context, graphs typically encode conditional independence relations among a set of (factual) random variables \citep{lauritzen1996graphical}; for certain scenarios, they can also encode generalized versions of conditional independence and other constraints \citep{evans2016graphs,richardson2023nested} --- we refer to these as \emph{probabilistic graphical models}. 
Among other applications, these graphs lead to sound and efficient algorithms for probabilistic reasoning \citep{lauritzen1988local,lunn2000winbugs}.
For the latter context, a graph models a causal system by encoding the relations of its building blocks, namely counterfactual (as well as factual) random variables; as such, the graph not only describes the system's behavior as things naturally occur, but also its behavior under interventions \citep{wright1934method,spirtes2000causation,pearl2009causality} --- we use the term \emph{causal graphical models} to refer to graphs like this. 
These graphs are used to determine if a causal effect can be identified from observational data \citep{huang2006identifiability,shpitser2006identification}, and if so, to also inform the best way to estimate the effect \citep{rotnitzky2020efficient,henckel2022graphical,guo2023variable}. 
There is also a large literature on learning probabilistic or causal graphs from data, a problem known as structure learning or causal discovery; the reader is referred to  \citet{drton2017structure,glymour2019review} for two surveys.

In whichever context these graphs are used, a key unresolved issue is to define a suitable distance between them. 
Having a distance is essential for several tasks, including benchmarking causal discovery algorithms \citep{scutari2019learns,rios2023benchpressscalableversatileworkflow}, characterizing their statistical performance \citep{Jin2012OptimalityOG,Ke2012CovariateAS,han2016sparse} or finding a consensus graph among a collection of graphs through a Frech\'et mean \citep{Ferguson,wang2025confidence}. 
Other potential applications include constructing ball-like confidence regions for a graph and conducting sensitivity analyses with respect to potential mistakes in the causal graph.
% We believe that a meaningful distance notion enables research progress in these different problems. 
%\textcolor{red}{We believe that a meaningful distance notion has delayed research into the latter three problems.} 
%characterizing the statistical performance of causal discovery algorithms, constructing confidence sets for a graph, and conducting sensitivity analysis of causal inference with respect to assumptions made about the causal structure. 
However, defining a suitable distance is challenging for two reasons. 
First, there are many types of graphs, such as undirected graphs (UGs), directed acyclic graphs (DAGs), acyclic directed mixed graphs (ADMGs), completed partially directed acyclic graphs (CPDAGs), and maximally oriented partially directed acyclic graphs (MPDAGs), each with its own edge types and rules on how edges may be configured. Second, each type of graphs relies on a set of \emph{semantics} (sometimes called \emph{Markov properties}) to be interpreted as probabilistic or causal models, and a good distance between two graphs should reflect how close they are in terms of the models they represent. 

\subsection{Related work}
Several distances have been proposed in the literature \citep{tsamardinos2006max,peters2015structural,viinikka2018intersection,wahl2025metric}, which we divide into two categories --- \emph{model-agnostic} and \emph{task-oriented} distances. 

The most popular distance is the structural Hamming distance (SHD) \citep{tsamardinos2006max}. 
It counts the number of edges that differ between two graphs with various proposals for how to weigh different types of edge mismatches \citep{perrier2008finding}. 
Irrespective of the weighting, the SHD is computationally cheap, flexible with respect to the graph class, and guaranteed to define a metric.
It is, however, \emph{model-agnostic} --- it disregards the specific models the graphs represent and is solely defined on the graphs. 
This results in undesirable behaviors. 
First, for a pair of graphs, the SHD provides a fixed answer regardless of whether the graphs are interpreted probabilistically or causally. 
Second, the SHD does not depend on the set of graphs in consideration. It is agnostic to constraints posed by the graph type or domain specific considerations (e.g., one may wish to consider acyclic graphs). 
Instead, the SHD treats each edge individually, which ignores that not all edge configurations lead to an instance in the graph class under consideration. We illustrate these issues with an example.

\begin{example} \label{ex:SHD}
Consider the four CPDAGs (see \cref{sec:graph_background}) listed in \cref{fig:three_cpdags}.
Each graph represents a set of distributions over $n+1$ random variables that obey the probabilistic constraints it encodes, where a missing edge between two variables signifies the conditional independence between them given another set of variables. 
Based on the edges (ignoring the difference between directed and undirected edges for now), we can see $\G_1 \succ \G_2 \succ \G_3 \succ \G_4$ in terms of the \emph{models} they represent. 
\cref{tab:ex-SHD} shows the distance of graphs $\G_2, \G_3, \G_4$ relative to $\G_1$: 
the first two columns list the two versions of the SHD (see \cref{def:SHD}) and the third column is the distance we propose in this paper. 
Our distance reflects what we expect logically: relative to $\G_1$, the graph $\G_2$ is the closest, followed by $\G_3$, and lastly $\G_4$. 
In contrast, the SHD incorrectly deems $\G_3$ to be the closest (can be much closer than $\G_2$ and $\G_4$) and fails to capture the fact that $\G_2$ is strictly closer than $\G_4$. 
This is due to the fact that the SHD wrongly assumes that undirecting any edge in $\G_2$ leads to a valid CPDAG that lies in between $\G_1$ and $\G_2$, thereby overestimating the distance between $\G_1$ and $\G_2$. 

Moreover, our distance agrees with the Bayesian Information Criterion (BIC) in the last column: we generate data under $\G_1$ from a Gaussian structural equation model on 11 variables with unit edge weights and noise variances, draw 100 datasets of size 1000, then fit models and estimate $\E[\mathrm{BIC}(\hat{\G}_i)-\mathrm{BIC}(\hat{\G}_1)]$. Here, a lower score indicates a better model. The discordance between BIC and SHD suggests that using SHD to benchmark score-based causal discovery algorithms \citep[see][\S 4]{drton2017structure} based on BIC or similar criteria, such as greedy equivalence search \citep{chickering2002optimal}, can be misleading.

\begin{table}[b]
\centering
\caption{Distances and BIC ($\pm$ standard error) relative to $\G_1$ for graphs in \cref{fig:three_cpdags} (assuming $n$ is even)}
\label{tab:ex-SHD}
\scalebox{1.}{%
\begin{tabular}{ccccc} 
\toprule
Graph & $\SHD_1$ & $\SHD_2$ & Model-oriented distance & BIC under $n=11$ and sample size $1000$  \\
\midrule
$\G_2$ & $n+1$ & $n+2$ & $1$ & $690 \pm 5$  \\
$\G_3$ & $2$ & $4$ & $2$ & $1084 \pm 6$\\
$\G_4$ & $n/2 + 1$ & $n + 2$ & $n/2+1$ &  $10159 \pm 20$ \\
\bottomrule
\end{tabular}}
\end{table}
\end{example}

\begin{figure}[!t]
    \centering
    %--- Left column (G1, G3) ---
    \begin{minipage}[t]{0.45\columnwidth}
        \centering
         %\subfloat[$\G_1$]
         {\includegraphics[width=.6\linewidth]{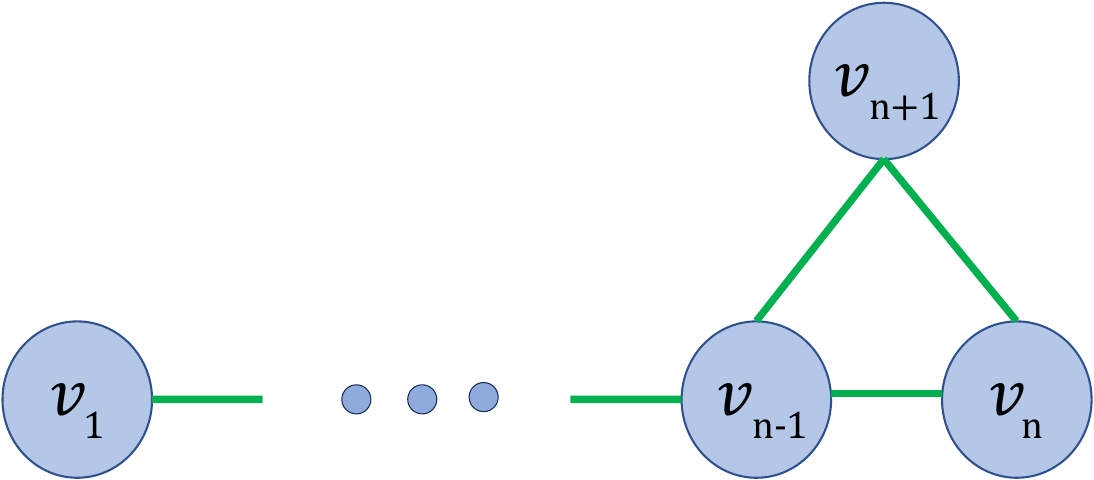}}\\
  (a) $\mathcal{G}_1$
  
\vspace{0.1in}         
       % \subfloat[$\G_3$]
       {\includegraphics[width=.6\linewidth]{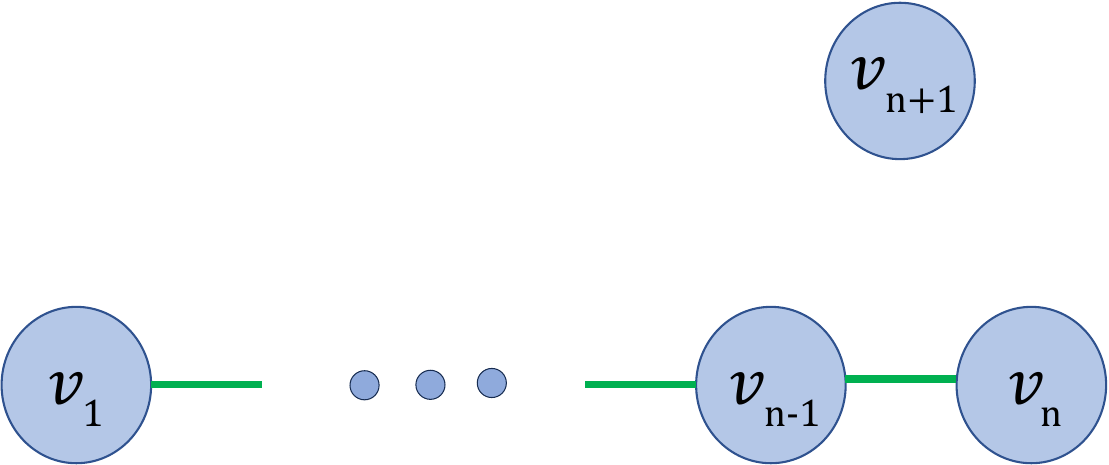}}\\
  (c) $\mathcal{G}_3$
       %\\[0.3em]
    \end{minipage}
    \hspace{0.05\columnwidth}
    %--- Right column (G2, G5) ---
    \begin{minipage}[t]{0.45\columnwidth}
        \centering
        %\subfloat[$\G_2$]
        {\includegraphics[width=.6\linewidth]{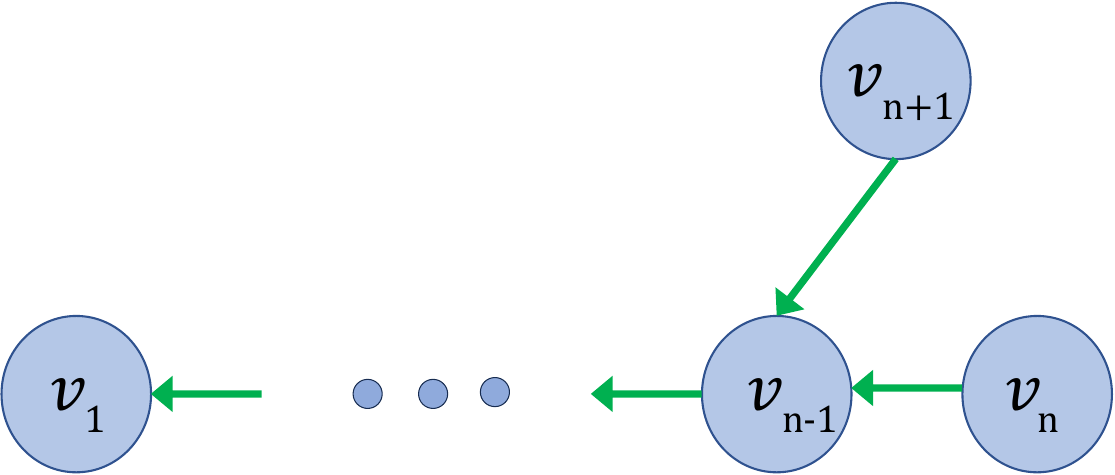}}\\
  (b) $\mathcal{G}_2$

  \vspace{0.1in}         
        %\subfloat[$\G_4$]
        {\includegraphics[width=0.9\linewidth]{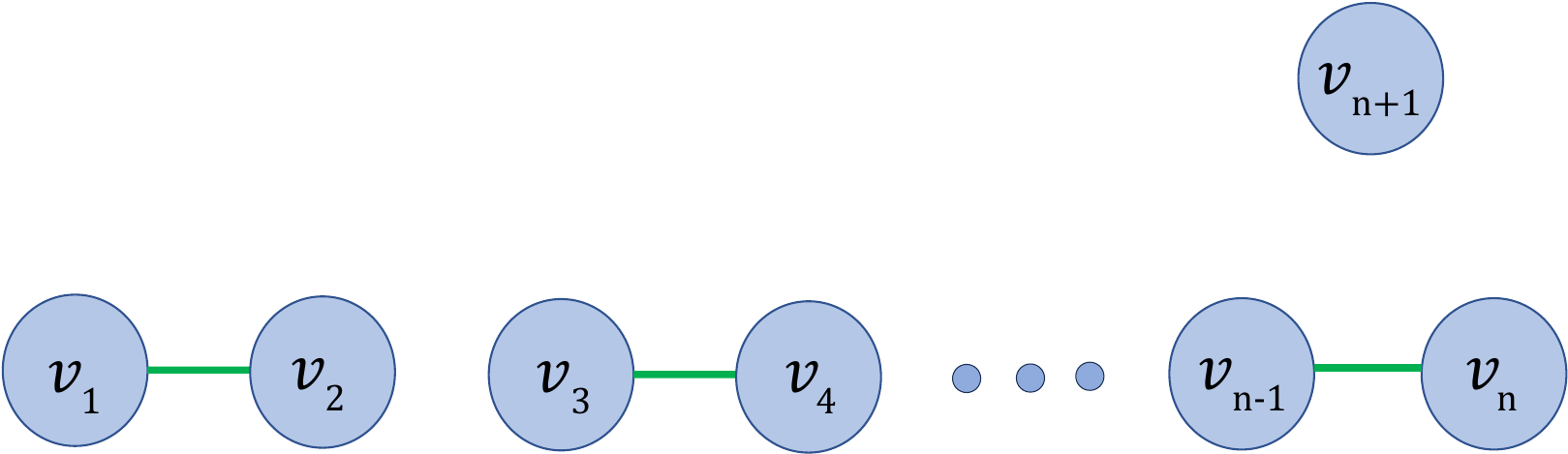}}\\
  (d) $\mathcal{G}_4$
    \end{minipage}
%    \\[1em]
%    %--- Bottom row (comparison plots) ---
%    %\subfloat[SHD distance vs BIC score]
%     \begin{minipage}[t]{0.45\columnwidth}
%     \centering
%    {\includegraphics[width=1\columnwidth]{}}\\
%  (e) SHD distance vs BIC score
%  \end{minipage}
%   % \hfill
%  %  \subfloat[Proposed model-oriented distance vs BIC score]
%       \begin{minipage}[t]{0.45\columnwidth}
%   \centering
%  {\includegraphics[width=1\columnwidth]{}}\\
%  (f) Model-oriented distance vs BIC score
%  \end{minipage}
    \caption{CPDAGs $\G_1 \succ \G_2 \succ \G_3 \succ \G_4$ in terms of the models they represent.}
    \label{fig:three_cpdags}
\end{figure}

%2 \text{ or } 4$ due to the fact that $\G_3$ has two more edges than $\G_1$. Meanwhile, $\mathrm{SHD}(\G_2,\G_3) = n+1$ or $n+2$ because the edges are of different types. Hence, as $n$ grows, $\G_3$ stays close to $\G_1$ but gets further away from $\G_2$. 

To address the limitations of SHD, \citet{peters2015structural} proposed an alternative, \emph{task-oriented} approach, where the distance between two graphs is measured according to a concrete task they serve.
For causal graphs, the central task is effect identification, i.e., expressing a causal effect as a functional of the observed distribution valid under any system compatible with the graph. 
Concretely, \citet{peters2015structural} introduced the Structural Intervention Distance (SID), which counts the number of causal effects that would be wrongly inferred by adjusting for the parents of the treatment in one graph, if the true data generating process corresponds to the other graph. \citet{henckel2024gadjid} generalized this approach into a broader class of \emph{adjustment identification distances}. Finally, \citet{wahl2025metric} extended this approach to probabilistic graph classes by proposing a family of \emph{separation distances}, which are counts of conditional independence statements that hold in one graph but not the other. The specific statements considered are determined by a separation strategy, designed to reflect common procedures for selecting separating conditioning sets such as parents. Task-oriented distances are generally fast to compute and useful for benchmarking.
They are, however, not metrics: they allow zero distance between distinct graphs, lack symmetry, and violate the triangle inequality. We show in \cref{app:SID} that a symmetrized version of SID also fails the triangle inequality.  The fact that they are not metrics limits their utility as analytical tools or for defining well-behaved neighborhoods.
% limiting the utility of such task-oriented measures. 
%It is worth mentioning that similar challenges exist for defining a distance between other combinatorial objects, such as permutations \citep{kendall1938new}, phylogenetic trees \citep{bansal2010robinson} and clusterings \citep{meila2005comparing}. 
%In these contexts, distances fall into two (not mutually exclusive) categories: information-theoretic distances \citep{meila2005comparing} and distances based on counting the minimal number of basic operations needed to transform one object into the other \citep{kendall1938new}. Both coincide with a model-oriented definition that we propose in this paper 

\citet{wahl2025metric} also proposed the s/c-metric, a weighted symmetric difference between the sets of all conditional independencies implied by each graph. While very expensive to compute, it is a proper metric and, since probabilistic graphical models are effectively collections of conditional independence statements (CIs), it is naturally model-oriented. It is, however, only applicable to probabilistic models (e.g., those represented by CPDAGs) and is not transparently generalizable to causal models (e.g., those represented by MPDAGs). It also does not adapt to the graph class under consideration (e.g., CPDAGs vs UGs) and ignores which sets of CIs \emph{correspond to} graphs in the class. This leads to some idiosyncrasies, which we discuss in \cref{app:sep-distance}. 

We provide an overview of the characteristics of these distances in \cref{tab:distance_comparison}.

% \begin{example}[structural Hamming distance violates Desideratum 2] 
%, because the model classes are nested: $\M_{\text{cpdag}}(\G_1) \subseteq \M_{\text{cpdag}}(\G_2) \subseteq \M_{\text{cpdag}}(\G_3)$, and 
%if we consider the model classes these graphs represent, $\G_2$ is actually closer to $\G_1$, because the model classes are nested: every conditional dependence encoded by $\G_1$ is also encoded by $\G_2$, and both are contained within those implied by $\G_3$. Furthermore, $\G_2$ is among the closest possible graphs to $\G_1$, as there does not exist another graph $\G$ whose model is nested between those of $\G_1$ and $\G_2$.

\subsection{Main contributions and organization of the paper}  \label{sec:contrib}
This paper makes the following main contributions:
\begin{enumerate}
\item Motivated by the use of graphs as objects for representing and reasoning about \emph{models} in statistics, we propose a formal framework for defining distance between graphs \emph{through} the models they represent. Our distance is applicable to almost any $(\g, \M)$, where $\g$ is a class of graphs and $\M$ is the \emph{semantics} that injectively maps a graph to a model. 

\item By design, our distance is adaptive to both $\M$ and $\g$. It differentiates between \emph{probabilistic models} (distributions of factual variables) and \emph{causal models} (distributions of counterfactual as well as factual variables). When $\g$ is chosen to be a class of simpler graphs, such as polytrees considered in \cref{sec:polytree-cpdags,sec:polytree-mpdags}, our distance simplifies.
% This is a feature unique to our distance; see \cref{tab:distance_comparison}.

\item We build our distance using the \emph{model-oriented partially ordered set (poset)} associated with $(\g, \M)$, which orders $\G \preceq \G'$ if $\M(\G) \subseteq \M(\G')$ for graphs $\G, \G' \in \g$. Using a notion of neighbors intrinsic to the poset, called \emph{covering relations}, our distance is defined as the shortest-path distance between graphs through neighbors. 
By definition, our distance is a \emph{metric}, which is essential to many statistical applications, such as bounding the error as in \cref{eq:d-triangle} and the sensitivity analysis conducted in \cref{sec:robust-adj}. 
Not all distances in the literature are metrics; see \cref{tab:distance_comparison}.

\item Our approach can be seen as a principled generalization of a related approach, which defines a distance between combinatorial objects, such as rankings \citep{kendall1938new} and clusterings \citep{meila2005comparing} considered in the literature, as the minimum number of \emph{basic operations} to transform one into the other. 
Instead of relying on any ad-hoc specification, the basic operations in our approach are \emph{defined} by the poset in terms of the covering relations. 
Our study of neighborhoods also connects to score-based causal discovery, where an algorithm moves through neighbors to search for a score-minimizing graph; this relationship is further explained in \cref{sec:cpdags}. 

\item Central to our development are several structural properties of model-oriented posets, which we study for a few important graph classes; see \cref{tab:summary} for a summary. Among these, for UGs, DAGs and ADMGs, we show that their posets are simple enough that our distance coincides with \emph{certain versions} of the structural Hamming distance making it cheap to compute. 

\item For CPDAGs and MPDAGs, the corresponding posets are more complex and our distance is markedly different from SHDs. We develop A* algorithms for computing our distance by pruning the search space using informative lower and upper bounds. For CPDAGs, our current implementation scales to graphs with as many as 13 vertices despite $|\g| > 10^{30}$. Numerical findings show that SHDs tend to overestimate the distance between graphs. 

\end{enumerate}

\begin{table}[t]
\caption{Comparison of our model-oriented distance to the Structural Hamming Distance (SHD),
% \citep{tsamardinos2006max}, 
 Structural Intervention Distance (SID), 
 % \citep{peters2014causal},
Separation Distances (SD) and s/c-Metric.} 
% \citep{wahl2025metric}.}
\label{tab:distance_comparison}
\scalebox{0.9}{%
\begin{tabular}{rcccccc}
\toprule
\textbf{Distance} &
\textbf{Metric} &
\textbf{Model-oriented} &
\makecell{\textbf{Suitable for}\\ \textbf{prob. graphs}} &
\makecell{\textbf{Suitable for}\\ \textbf{causal graphs}} &
\makecell{\textbf{Differentiating}\\ \textbf{causal vs.~prob.}} & \makecell{\textbf{Computationally}\\\textbf{cheap}} \\
\midrule
SHD & \checkmark  & \crossmark & \checkmark & \checkmark & \crossmark & \checkmark \\
SID & \crossmark & \crossmark & \crossmark & \checkmark & NA & \checkmark \\
SD  & \crossmark & \crossmark & \checkmark & \crossmark & NA &\checkmark \\
s/c-metric  & \checkmark & \checkmark & \checkmark & \crossmark & NA &\crossmark \\
Ours & \checkmark & \checkmark & \checkmark & \checkmark & \checkmark &\checkmark / \crossmark\\
\bottomrule
\end{tabular}%
}
\end{table}

The rest of the paper is organized as follows. 
In \cref{sec:background}, we introduce notation and background on graphs and the associated probabilistic or causal models. 
In \cref{sec:framework}, we define the model-oriented poset and distance, and study generic structural properties that facilitate distance computation. 
Then, these properties are studied in detail for a few graph classes in \cref{sec:m_distance_Graphs}. 
In \cref{sec:A-star}, we develop a generic A* algorithm for computing our distance, apply it to CPDAGs and present numerical results. 
\cref{sec:robust-adj} briefly describes an application to covariate adjustment in causal inference subject to graph misspecification. 
Finally, we conclude our paper with a discussion in \cref{sec:discussion}. 
All proofs and additional results are deferred to the Supplementary Material.

\section{Background} \label{sec:background}

\subsection{Notation} \label{sec:notation}
We use Roman capital letters (e.g., $\sV, \sB$) to denote a set and lowercase letters (e.g., $v_1, b_2$) to denote its elements. 
For a set $\sA$, we use $2^{\sA}$ to denote its power set. 
For two sets $\sA$ and $\sB$, we use $\sA \triangle \sB := (\sA \setminus \sB) \cup (\sB \setminus \sA)$ to denote their symmetric difference.
We use calligraphic letters (e.g., $\G, \D$) to denote graphs and italic capital letters (e.g., $X, Y$) to denote random variables or vectors. 
Symbol $\g$ is reserved for a class of graphs, which may vary from section to section. 
For random variables or vectors $X,Y,W$, we use $X \indep Y \mid W$ to denote that $X$ and $Y$ are conditionally independent given $W$. 

\subsection{Graphs and related concepts} \label{sec:graph_background}
We consider graphs with vertices representing random variables and edges between them representing probabilistic or causal dependencies between these variables. 
Throughout, we only consider graphs over a \emph{finite} set of vertices. 
Hence, a vertex set $\sV$ can be identified with $\{1,\dots, |\sV|\}$ for convenience. 
We write a graph over $\sV$ with $k \geq 1$ edge types as $\mathcal{G} = (\sV, \sE^1, \dots, \sE^k)$, where each edge type $\sE^i \subseteq (\sV \times \sV)$ consists of unordered or ordered pairs of \emph{distinct} vertices (to disallow self-loops) depending on the edge type: unordered for undirected ($-$) or bidirected ($\leftrightarrow$) edges, and ordered for directed ($\rightarrow$) edges. 

A graph $\mathcal{G}_1 = (\sV_1, \sE_1^1, \dots, \sE_1^k)$ is a \emph{subgraph} of $\mathcal{G}_2 = (\sV_2,\sE_2^1, \dots, \sE_2^k)$ if $\sV_1 \subseteq \sV_2$ and $\sE_1^i \subseteq \sE_2^i$ holds for each edge type $i$. 
For $\sV' \subseteq \sV$, the subgraph of $\G$ induced by $\sV'$ is $\G_{\sV'} := (\sV', \sE^1 \cap (\sV'\times\sV'), \dots, \sE^k \cap (\sV'\times\sV'))$. 
A \emph{cycle} is a sequence $v_1,\dots,v_m$ ($m \ge 3$) with $v_m = v_1$ such that every $(v_{i}, v_{i+1})$ is connected by an edge. 
A \emph{directed cycle} is a cycle of the form $v_1 \to \dots \to v_m$.
A \emph{partially directed cycle} is a cycle $v_1, \dots, v_m$ where either $v_i - v_{i+1}$ or $v_i \to v_{i+1}$ holds for each $i$ and $v_i \to v_{i+1}$ holds for some $i$.
A graph is \emph{chordal} if every cycle of length $\ge 4$ has a chord (an edge connecting two non-consecutive vertices). 
A graph is \emph{simple} if between every pair of vertices there is at most one edge (of any type). 
A \emph{partially directed acyclic graph} (PDAG) is a simple graph with directed and undirected edges and no directed cycles. 
For a PDAG, we use the following standard notation for parents, children, siblings and neighbors:
$\Pa(v):=\{w: w \to v\}, \; \Ch(v):=\{w: w \leftarrow v\}, \; \Sib(v):=\{w: w - v\}, \; \Nb(v) := \Sib(v) \cup \Pa(v) \cup \Ch(v).$
The graph obtained by replacing all directed edges with undirected edges is called the \emph{skeleton}; if it is acyclic, the PDAG is called a \emph{polytree}.
% The adjacencies induced by directed or undirected edges is called the \emph{skeleton}; when the skeleton contains no cycle, the PDAG is called a \emph{polytree}. 
We now describe the graph types that we focus on in this  paper.

\begin{description}
\item[Undirected graph (UG)] A simple graph consisting of undirected edges. 
An undirected graph $\G$ over $\sV$ defines a probabilistic model for the random vector $(X_v: v \in \sV)$ through the semantics known as the \emph{Markov properties} of UGs.
In the literature, there are four such semantics: factorization, as well as the global, local and pairwise Markov properties, which are not equivalent in general; see \citet[\S3.2.1]{lauritzen1996graphical}.
The choice is not essential for our approach. For simplicity, in \cref{sec:models} we will pose a positivity condition so that these four semantics are equivalent. 
For disjoint sets $\sA,\sB,\sC \subset \sV$, we use $\sA \indep \sB \mid \sC$ in $\G$ to denote that $\sA$ and $\sB$ are separated by $\sC$ in the graph.

\item[Directed acyclic graph (DAG)] A PDAG with only directed edges. 
As explained in \cref{sec:models}, a DAG can represent either a probabilistic model or a causal model. 
For the former, a DAG encodes a set of conditional independence (CI) constraints. 
Two DAGs defined on the same vertex set may encode the same set of CIs. If so, they are called \emph{Markov equivalent}; a \emph{Markov equivalence class} consists of the set of DAGs that are Markov equivalent. 
Two DAGs are Markov equivalent if and only if they share the same skeleton  and the same v-structures ($v_1 \rightarrow v_2 \leftarrow v_3$ with no edge between $v_1,v_3$ is called a v-structure at $v_2$) \citep{Verma1990EquivalenceAS,andersson1997characterization}; see \cref{fig:dags}(a) for an example. 
For a DAG $\D$ over vertex set $\sV$ and disjoint sets $\sA,\sB,\sC \subset \sV$, we use $\sA \indep_{d} \sB \mid \sC$ in $\D$ to denote that $\sA$ and $\sB$ are \emph{$d$-separated} by $\sC$ in the graph; see, e.g., \citet[\S 3.2.2]{lauritzen1996graphical}.

\item[Completed partially directed acyclic graph (CPDAG)] 
% Also known as an \emph{essential graph}, 
A PDAG representing a Markov equivalence class of DAGs, constructed as follows. First, let $\G$ have the same skeleton as any (and hence every) DAG in the class. Then, orient an edge if it has the same direction in every DAG in the class; otherwise, leave it undirected. See \cref{fig:dags}(a) for an example. We write $[\G]$ for the Markov equivalence class represented by $\G$. Not every PDAG is a valid CPDAG: it must satisfy the structural constraints in \cref{sec:valid-cpdag}, such as not containing the forbidden structures shown in \cref{fig:dags}(b).

% \item[Completed partially directed acyclic graph (CPDAG)] Also known as \emph{essential graph} and a PDAG for representing a Markov equivalence class of DAGs, which can be obtained as follows: 
% First, let $\G$ have the same skeleton as any (and hence every) DAG in the class. 
% For each edge in the skeleton, orient the edge accordingly if the corresponding edge points in the same direction in every DAG in the equivalence class; otherwise, let the edge be undirected. See \cref{fig:dags}(a) for an example. 
% We use $[\G]$ to denote the Markov equivalence class represented by $\G$. 
% Not every PDAG is a valid CPDAG: it has to obey the structural constraints listed in \cref{sec:valid-cpdag}, such as not containing any forbidden structure shown in \cref{fig:dags}(b). 

\item[Maximally oriented partially directed acyclic graph (MPDAG)] A PDAG for representing a set of Markov equivalent DAGs subject to certain \emph{background knowledge} about the orientation of one or more edges \citep{hauser2012characterization,Perkovic2019IdentifyingCE,jointly}. For the example in \cref{fig:dags}(a), while the Markov equivalence class represented by the CPDAG consists of 3 DAGs, among them, the two DAGs subject to $v_1 \leftarrow v_2$ are further represented by an MPDAG. 
For an MPDAG $\G$, we similarly use $[\G]$ to denote the set of DAGs it represents. 
By definition, a CPDAG is an MPDAG (without additional background knowledge) and a DAG is also an MPDAG (with full background knowledge).
Not every PDAG is a valid MPDAG: it has to obey the structural constraints listed in \cref{sec:valid-mpdag}.
In \cref{sec:models} and afterwards, we assign different semantics to CPDAGs and MPDAGs: for discussing probabilistic models associated with DAGs, it suffices to use CPDAGs only; for discussing the causal model associated with a DAG from a set of candidate DAGs that are Markov equivalent (but causally distinct), we use MPDAGs instead.

\item[Acyclic directed mixed graph (ADMG)] A graph consisting of directed and bidirected edges that has no directed cycles \citep{richardson2003markov}. Contrary to the previous graph types, an ADMG need not be simple. 
A DAG is also an ADMG. 
For a DAG $\G$ over a vertex set $\sV = \sW \cup \sU$, partitioned into observed vertices $\sW$ and latent vertices $\sU$, a corresponding ADMG $\G(\sW)$ over $\sW$ can be constructed via \emph{latent projection} \citep{pearl1991inferred} to represent the model (both probabilistic and causal) induced on the observed vertices $\sW$. If $\sU = \{u\}$ is a singleton, the latent projection $\G(\sW)$ is constructed as follows. Start with $\G_{\sW}$, namely the subgraph induced by $\sW$, and then add the following edges, if they are not already present, to $\G_{\sW}$:
\[ \begin{Bmatrix} w_1 \leftarrow u \rightarrow w_2 \\ 
w_1 \leftrightarrow u \rightarrow w_2 \\
w_1 \rightarrow u \rightarrow w_2 \end{Bmatrix} \text{ in $\G$} \quad \implies \quad \text{add } 
\begin{Bmatrix} w_1 \leftrightarrow w_2 \\ w_1 \leftrightarrow w_2 \\ w_1 \rightarrow w_2 \end{Bmatrix}. \]
If $|\sU| > 1$, one can show that $\G(\sW)$ is uniquely constructed by iteratively projecting out the vertices in $\sU$ in any order \citep{evans2018markov}. See \cref{fig:dags}(c) for an example. 
Like DAGs, an ADMG can represent both a probabilistic model and a causal model, specifically a model of the observed variables while allowing for latent variables. As probabilistic models, an ADMG is associated with a nested Markov model that posits generalized conditional independence constraints \citep{richardson2023nested}; however, the Markov equivalence classes of such models still remain open. For this reason, we only consider the causal models associated with ADMGs. 
\end{description}

\begin{figure}[htbp]
\subfloat[]{\includegraphics[width=.5\columnwidth]{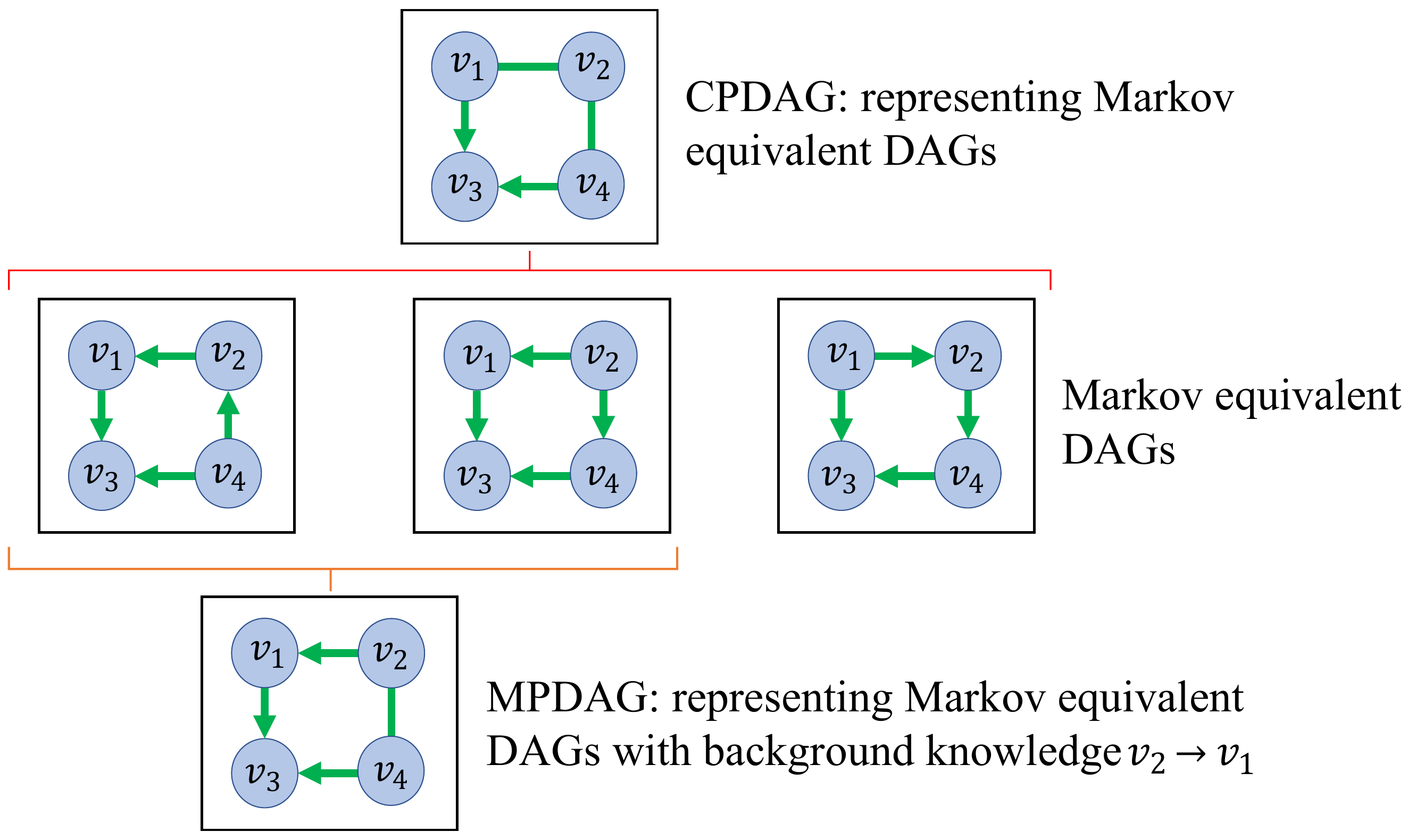}}
\hspace{2em}
\subfloat[]{\raisebox{2em}{\includegraphics[width=.2\columnwidth]{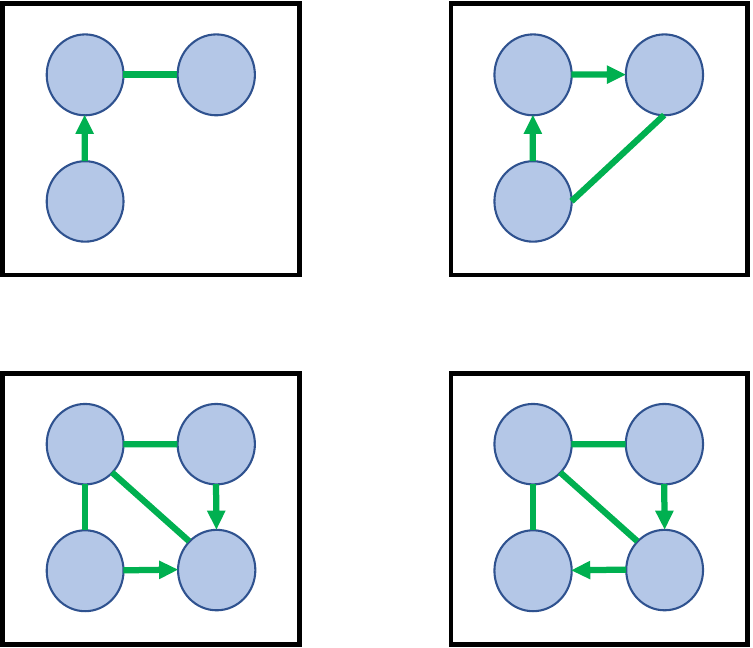}}}
\hspace{4em}
\subfloat[]{\includegraphics[width=.14\columnwidth]{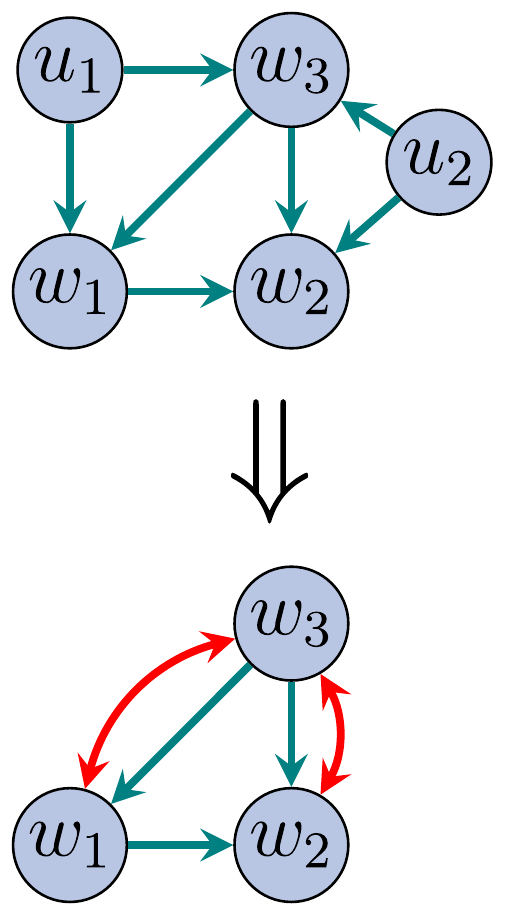}}
\caption{(a) The Markov equivalence class consisting of three DAGs is represented by the CPDAG at the top. 
Among them, two DAGs with background knowledge $v_2 \to v_1$ are further represented by the MPDAG at the bottom.
(b) Structures that are forbidden from a valid MPDAG; see also \cref{sec:valid-mpdag}. 
(c) An ADMG at the bottom formed by projecting out $\{u_1,u_2\}$ from the DAG above. }
\label{fig:dags}
\end{figure}

\subsection{Probabilistic and causal models associated with graphs} \label{sec:models}
For the five types of graphs we consider, we associate with each a \emph{semantic} that maps a graph to either a probabilistic or causal model: a probabilistic model is a set of distributions of the factual variables and a causal model is a set of distributions of the counterfactual variables. 

\subsubsection{Probabilistic models: UGs and CPDAGs} \label{sec:prob-models}
UGs and CPDAGs represent probabilistic models for a random vector $(X_v: v \in \sV)$. 
Suppose $X$ takes values in a product measurable space $\mathcal{X} = \prod_{v \in \sV} \mathcal{X}_v$. Let $\mu = \prod_{v \in \sV} \mu_v$ be a product measure over $\mathcal{X}$. 
Let $P$ be the law of $X$. 
We use the nonparametric model 
\begin{equation} \label{eq:mo}
\Mo := \left\{P: \text{$P$ is absolutely continuous with respect to $\mu$} \right\}
\end{equation}
as our ground set (here `$\mathrm{o}$' stands for observational) so all the models are \emph{subsets} of $\Mo$. 
For $P \in \Mo$, we use $p := \dd P / \dd \mu$ to denote its density. 

For UGs, we pose an additional positivity condition on the ground set such that all four Markov properties, namely factorization as well as the global, local
and pairwise Markov properties, lead to the same model definition \citep[\S3.2.1]{lauritzen1996graphical}. 
Concretely, for UGs let the ground set be $\Mo_{+} := \left\{P \in \Mo: p > 0 \text{ $\mu$-a.e.} \right\}$. 
An undirected graph $\G$ represents the model 
\begin{equation} \label{eq:m-ug}
\Mug(\G) := \big\{P \in \Mo_{+}: \sA, \sB, \sC \subset \sV \text{  disjoint},\; \sA \indep \sB \mid \sC \text{ in } \G  \implies X_{\sA} \indep X_{\sB} \mid X_{\sC} \text{ under $P$} \big\}.
\end{equation}

A CPDAG represents the probabilistic model $\Mcpdag(\mathcal{G})$ consisting of all distributions that factorize according to any (and hence every) DAG $\D \in [\G]$. 
That is, take any $\D \in [\G]$ and we can write $p(x) = \prod_{v\in \sV} p_{v}(x_v|x_{{\text{pa}_{\mathcal{D}}(v)}})$, where $p_v$ is a conditional density function. 
This model can be equivalently described in terms of the global or local Markov property associated with any $\D \in [\G]$; see \citet[\S 3.2.2]{lauritzen1996graphical}.
Since this holds without requiring positivity, we choose $\Mo$ in \cref{eq:mo} as the ground set and define the model through the global Markov property: for any $\D \in [\G]$,
\begin{equation} \label{eq:m-cpdag}
\Mcpdag(\mathcal{G}) := \big\{P \in \Mo: \sA, \sB, \sC \subset \sV \text{ disjoint}, \; \sA \indep_d \sB \mid \sC \text{ in $\D$} 
\implies X_{\sA} \indep X_{\sB} \mid X_{\sC} \text{ under $P$} \big\}.
\end{equation}

\subsubsection{Causal models: ADMGs, DAGs and MPDAGs} \label{sec:causal-models}
We follow \cite{zhao2025statistical} in defining causal models associated with DAGs and MPDAGs. 
Consider a causal system that underlies the observed factual random vector $X = (X_v: v \in \sV)$, which takes values in a product space $\mathcal{X} \equiv \mathcal{X}_{\sV}$ equipped with a suitable product $\sigma$-algebra. 
Here we use notation $\mathcal{X}_{\sI} := \prod_{v \in \sI} \mathcal{X}_v$ for every $\sI \subseteq \sV$. 
A causal model is a set of joint distributions of counterfactual variables as well as factual (i.e., observed) variables, both of which are part of the \emph{potential outcome schedule}: 
\[ X(\cdot) := \left(X_{v}(x_{\sI}): v \in \sV, \sI \subseteq \sV, x_{\sI} \in \mathcal{X}_{\sI} \right), \]
where $X_v(x_{\sI})$ is the potential outcome of variable $v$ under the intervention that imposes value $x_{\sI}$ on $X_{\sI}$. 
When $\sI = \sV$, we simply write $X_v(x_{\sV})$ as $X_v(x)$. 
In the special case when $\sI = \emptyset$, the margin $(X_v(\emptyset): v \in \sV)$ consists of all the factual variables, whose law is called the \emph{observed distribution}. 
The potential outcome schedule is a stochastic process that takes values in $\mathcal{X}^{\prod_{\sI \subseteq \sV} \mathcal{X}_{\sI}}$, which we assume is equipped with a suitable $\sigma$-algebra to form a measurable space. 
We use $P$ to denote the law of $X(\cdot)$, i.e., a probability measure on the measurable space. 
We assume $P$ admits regular conditional probability distributions that appear in the following definition; see \citet[Theorem 8.5]{kallenberg2021foundations} for regularity conditions on the measurable space for ensuring so. 
Similar to the definition of $\Mo$, we define the ground causal model to be those distributions that maintain \emph{consistency}:
\begin{multline} \label{eq:consistency}
\Mc:=\big\{P: P \left(X(x_{\sI}, x_{\sI'}) = X(x_{\sI}) \mid X_{\sI'}(x_{\sI}) = x_{\sI'} \right)=1, \text{ for all disjoint $\sI, \sI' \subseteq \sV$ and each $x \in \mathcal{X}$} \big\}, 
\end{multline}
where `$\mathrm{c}$' stands for causal. 
In words, in a world that imposes value $x_{\sI}$ on $X_{\sI}$, if we observe $X_{\sI'}$ to be $x_{\sI'}$, then the observed $X$ in that world, denoted by $X(x_{\sI})$, equals $X(x_{\sI}, x_{\sI'})$. Every causal model that we discuss next will be a subset of $\Mc$. 

We first introduce the causal models associated with ADMGs and those for DAGs will follow as a special case. Let $\G$ be an ADMG over vertex set $\sV$. 
For $\sA, \sB \subset \sV$, we write $\sA \not \leftrightarrow \sB$ in $\G$ if there is no bidirected edge between any vertex in $\sA$ and any vertex in $\sB$. 
We define 
\begin{equation} \label{eq:m-admg}
\Madmg(\G):= \left\{P \in \Mc: \text{$P$ satisfies (i) and (ii) with respect to $\G$} \right\},\,\text{where}
\end{equation}
\begin{enumerate}[(i)]
\item for every $v \in \sV$, $\sI \subseteq \sV$ and $x \in \mathcal{X}$, it holds that $P\left\{X_v(x_{\sI}) = X_v\left(x_{\Pa(v) \cap \sI}, X_{\Pa(v) \setminus \sI}(x_{\sI}) \right) \right\}=1$; 
\item for disjoint $\sA, \sB \subset \sV$ with $\sA \not \leftrightarrow \sB$, we have $X_{\sA}(x) \indep X_{\sB}(x)$ under $P$ for every $x \in \mathcal{X}$. 
\end{enumerate}
Condition (i) posits that the basic building blocks of the system are the so-called one-step-ahead counterfactuals $\{X_v(x_{\Pa(v)}): v \in \sV, x \in \mathcal{X} \}$ \citep{shpitser2022multivariate}. Further, (ii) states that the one-step-ahead counterfactuals of $i$ and $j$ under the same world $x$ are independent if there is no bidirected edge between $i$ and $j$. The causal model $\Madmg(\G)$ is known as the single-world/FFRCISTG model \citep{robins1986new,richardson2013single} in the literature, which makes fewer assumptions than Pearl's nonparametric structural equation model with independent errors (NPSEM-IE). 

Since a DAG is an ADMG without bidirected edges, $\Madmg$ when specialized to DAGs becomes
\begin{equation} \label{eq:m-dag}
\Mdag(\D):= \left\{P \in \Mc: \text{$P$ satisfies (i) and (iii) with respect to $\D$} \right\},\,\text{where}
\end{equation}
\begin{enumerate}[(i)]
\setcounter{enumi}{2} 
\item for every $x \in \mathcal{X}$, $\left(X_v(x_{\Pa_{\D}(v)}): v \in V \right )$ are mutually independent under $P$.
\end{enumerate}
Accordingly, we define the causal model associated with an MPDAG as a disjunction of DAG models. For any MPDAG $\G$, let
\[\Mmpdag(\G) :=  \bigcup_{\D \in [\G]} \Mdag(\D). \]

\section{Model-oriented distance via partially ordered sets}\label{sec:framework}
Let $\g$ be a collection of graphs 
over a finite vertex set $\sV$. 
Let $\M$ be an associated model map (i.e., \emph{semantics}) that assigns a model to each graph in the class. 
Given $(\g, \M)$, our goal is to define a model-oriented distance $d(\G_s, \G_t)$ for every $\G_s, \G_t \in \g$. 
By definition, $d$ depends on $\g$ and $\M$. 
For example, for $\sV = \{1,2,3,4,5\}$, we can define distances $d_1,d_2$ respectively for $(\g_1, \Mcpdag)$, $(\g_2, \Mcpdag)$ with 
\[ \g_1 := \{\text{all CPDAGs over $\sV$} \}, \quad \g_2 := \{\text{all polytree CPDAGs over $\sV$} \}. \]
In Section~\ref{sec:model_oriented}, we use the model map $\M$ to define the model-oriented poset and subsequently our model-oriented distance between $\G_s$ and $\G_t$. In Section~\ref{sec:simplification}, we describe how the poset structure may enable faster computation of the model-oriented distance. In Section~\ref{sec:edge-by-edge} we discuss cases where the model-oriented distance reduces to SHD.

%we explore structural properties of the poset underlying a graph class. These properties help with computing $d_{\Lp}$.

\subsection{Model-oriented poset and distance}
\label{sec:model_oriented}

\begin{definition}[Poset, cover, connectedness, least element, comparability] \label{defn:basic_prop}
A \emph{poset} (partially ordered set) $\Lp:= (\sL, \preceq)$ is a collection $\sL$ of elements and a relation $\preceq$ that is reflexive ($x \preceq x, ~ \forall x \in \sL$), transitive ($x \preceq y, y \preceq z \Rightarrow x \preceq z, ~ \forall x,y,z \in \sL$), and anti-symmetric ($x \preceq y, y \preceq x \Rightarrow x = y, ~ \forall x,y \in \sL$). {We use notation $x \prec y$ if $x \preceq y$ and $x \neq y$.} An element $y \in \sL$ \emph{covers} $x \in \sL$, {denoted as $x \coverby y$}, if $x \prec y$ and there is no $z \in \sL$ with $x \prec z \prec y$. The poset $\Lp$ is connected if for every $x,y \in \sL$, there exists a sequence of $k$ elements $x=x_1,x_2,\dots,x_k=y$ such that either $x_{i+1} \cover x_i$ or $x_{i+1} \coverby x_i$ for every $i = 1,2,\dots,k-1$. 
An element $x \in \sL$ is a \emph{minimal (maximal) element} if there is no $y \in \sL$ such that $y \prec x$ ($y \succ x$). 
An element $x \in \sL$ is called the \emph{least (greatest) element} if $x \preceq y$ ($x \succeq y$) for all $y \in \sL$. Elements $x, y \in \Lp$ are said to be \emph{comparable} if either $x \preceq y$ or $y \preceq x$; otherwise, they are \emph{incomparable}.
\end{definition}

\begin{condition}[Injectivity] \label{cond:inject}
For any $\G_s, \G_t \in \g$ and $\G_s \neq \G_t$, we have $\M(\G_s) \neq \M(\G_t)$. 
\end{condition}

\begin{condition}[Connectedness] \label{cond:connect}
For every $\G_s,\G_t \in \mathfrak{G}$, there exists a sequence of graphs $\G_s = \G_1, \dots, \G_k = \G_t$ with $k \geq 1$ such that for every $i = 1,\dots,k-1$, either $\mathcal{M}(\G_i) \subseteq \M(\G_{i+1})$ or $\M(\G_{i+1})\subseteq \M(\G_i)$.
%For every $\G_s,\G_t \in \mathfrak{G}$, there exists a sequence of graphs $\G_s = \G_1, \dots, \G_k = \G_t$ with $k \geq 1$ such that every $\G_i, \G_{i+1}$ are neighbors. 
\end{condition}

For natural choices of $(\g, \M)$, we argue that \cref{cond:inject,cond:connect} are generally satisfied. In fact we show in \cref{sec:inj} that all the five models $\Mug, \Mdag, \Madmg, \Mcpdag, \Mmpdag$ discussed in this paper satisfy \cref{cond:inject}. 
Connectedness is usually implied by either of the following two conditions. 

\begin{condition}[Least element] \label{cond:least}
There exists a graph $\Glst \in \g$ such that $\M(\Glst) \subseteq \M(\G)$ for every $\G \in \g$. 
\end{condition}
\noindent When it exists, $\Glst$ is the \emph{unique} least element in $\g$ and is usually taken to be the \emph{empty graph}, which represents the smallest model. Again, Condition~\ref{cond:least} is satisfied by all five graph classes considered in this paper with $\Glst$ being the empty graph. This condition also has the following dual.  
\begin{conditionp}{{\ref*{cond:least}$'$}}[Greatest element] 
There exists $\Ggr \in \g$ such that $\M(\Ggr) \supseteq \M(\G)$ for every $\G \in \g$. 
\end{conditionp}

\begin{definition}[Model-oriented poset] \label{def:Lp}
Given a graph class $\g$ and an associated model map $\M$ that satisfies \cref{cond:inject} and \cref{cond:connect}, the corresponding \emph{model-oriented poset} is $\Lp:= (\g, \preceq)$ with `$\preceq$' given by model containment: for $\G_1,\G_2 \in \mathfrak{G}$, we say $\G_1 \preceq \G_2$ if $\M(\G_1) \subseteq \M(\G_2)$. 
\end{definition}

Reflexivity and transitivity of `$\preceq$' hold by definition, while the anti-symmetry follows from the injectivity of $\M(\cdot)$ (\cref{cond:inject}). 
\cref{cond:connect} ensures that $\Lp$ is connected. 
The poset $\Lp$ offers a natural way to organize the graphs $\G \in \mathfrak{G}$ based on the models they represent.  
For example, consider $\g$ to be all the UGs over the vertex set $\{1,2,3\}$ and let $\Mug$ be the model map. 
In this case, the partial order `$\preceq$' can be characterized graphically: for $\G_1, \G_2 \in \g$, one can show that $\G_1 \preceq \G_2$ if and only if $\G_1$ is a subgraph of $\G_2$. 
Further, $\mathcal{G}_2$ covers $\G_1$ if and only if $\mathcal{G}_2$ has \emph{exactly one more edge} than $\mathcal{G}_1$. The least element $\Glst$ in the poset is the empty graph. 

This poset can be visualized with a \emph{Hasse diagram} shown in \cref{fig:hasse}(a): each box is a graph; if $\G_2$ covers $\G_1$, we then place $\G_2$ above $\G_1$ and draw an upward arrow $\G_1  \dashrightarrow \G_2$.
Partial order can be read off from the diagram: $\G_1 \preceq \G_2$ if there exists a directed path (can be of length zero) from $\G_1$ to $\G_2$. 
Hence, the poset $\Lp = (\g, \preceq)$ can be compactly represented by its Hasse diagram, which is a directed acyclic graph over $\g$. 

\begin{definition}[Model-oriented distance] Under \cref{cond:inject,cond:connect}, let $\Lp:= (\g, \preceq)$ be a model-oriented poset. The model-oriented distance between two graphs is the shortest-path distance in the Hasse diagram:
\begin{align*}
& d_{\Lp}(\G_s,\G_t):= \min\left\{\mathrm{length}(p): p \in \sP_{\Lp}(\G_s, \G_t) \right\}, \quad \G_s, \G_t \in \g, \quad \text{where} \\
\sP_{\Lp}(\G_s, \G_t) &:= \left\{p: \;\text{$p$ is a path between $\G_s$ and $\G_t$ in the Hasse diagram of $\Lp$} \right\} \\
&= \left\{(\G_0, \dots, \G_d): \G_0 = \G_s, \G_{d} = \G_t, \,\text{and for $1\leq i \leq d$, $\G_{i-1} \coverby \G_i$ or  $\G_{i-1} \cover \G_{i}$} \right \}.
\end{align*}
\end{definition}
%Finally, it is not hard to see that our model-oriented distance, denoted hereafter as $d_{\Lp}$ to highlight its dependence on $\Lp$ and hence $(\g, \M)$, is given by

%We will first describe our approach at a high level. 

%Consider a graph class $\mathfrak{g}$ over a node set $V$ equipped with a model map $\M$ linking a model to each graph $\G \in \mathfrak{G}$. Our goal is to find the distance between the graphs $\G_s$ and $\G_t$, with $\G_s,\G_t \in \mathfrak{G}$. Section~\ref{sec:high_level} describes our high-level approach for designing an model-oriented distance, which is based on a number of simple model modifications to transform the model of $\G_s$ into the model of $\G_t$. Sections~\ref{sec:poset_organization} and~\ref{sec:poset_distance} formalize this high-level approach using partially ordered sets (posets): Section~\ref{sec:poset_organization} explains how the graphs in $\mathfrak{G}$ can be organized into a poset, and Section~\ref{sec:poset_distance} describes how the poset structure enables a natural formulation for computing distances. 
%
%Throughout this section, we use the class of UGs, as a running example, with the corresponding model map $\mathcal{M} = \mathcal{M}_{\text{undir}}$. However, the proposed framework is general and will be applied to other types of graphs in Section \ref{sec:m_distance_Graphs}.

%\subsection{A high-level description of our approach} \label{sec:high_level}
The shortest path length is a standard way to measure distance in posets \citep{Monjardet1981MetricsOP, Fishburn1993CombinatoricsAP, Brualdi1995CodesWA}, although its use has primarily been confined to the order theory literature. 
As an alternative interpretation of the model-oriented distance, define
\begin{equation} \label{eqs:neighbor}
\text{Neighbors}(\G) := \{\G': \G' \coverby \G\} \cup \{\G': \G' \cover \G\}, \quad \G \in \g,
\end{equation}
consisting of graphs that either cover or are covered by $\G$. That is, $\G$ and $\G'$ are neighbors if there does not exist another graph whose model is strictly nested between those of $\G$ and $\G'$. Intuitively, $\G'$ is a neighbor of $\G$ if it corresponds to a finest possible refinement or coarsening of the model associated with $\G$. The model-oriented distance between two graphs therefore counts the \emph{minimum number of such elementary refinement or coarsening steps} required to transform one model into the other.

Defining distance through the minimal number of basic operations to transform one object to the other is common in other combinatorial settings. In the context of rankings or permutations, the Kendall's tau distance \citep{kendall1938new} counts the smallest number of adjacent swaps needed to align two rankings. In clustering, the variation of information \citep{meila2005comparing} counts the smallest weighted number of merges or splits needed to align two partitions. In both cases, the distance reflects how many basic, interpretable operations are required to transform one model into another.
Here, we use the notion of neighbor to provide a model-oriented definition of what the basic operations are. 

\begin{proposition}[Metric] Suppose \cref{cond:inject} and \cref{cond:connect} hold. The model-oriented distance is a metric, i.e., it satisfies the following three properties:
(i) symmetry: $d_{\Lp}(\G_s,\G_t) = d_{\Lp}(\G_t,\G_s)$;
(ii) positivity: $d_{\Lp}(\G_s,\G_t)\in [0,\infty)$ and $d_{\Lp}(\G_s,\G_t) = 0$ if and only if $\G_s=\G_t$; and
(iii) triangle inequality:
$d_{\Lp}(\G_s,\G_t)\le d_{\Lp}(\G_s,\G)+d_{\Lp}(\G,\G_t)$ for any $\G\in\mathfrak{G}$.
\end{proposition}
The proof of this proposition follows straightforwardly from the model-oriented distance being a shortest-path distance, which is a metric, defined on an undirected graph with vertex set $\g$ and edges incident to neighbors. Note that \cref{cond:connect} ensures that this graph is connected so that $d_{\Lp}$ is finite. 

The significance of the symmetry and positivity properties is straightforward. 
We underscore the importance of the triangle inequality from both statistical and algorithmic perspectives. Statistically, the triangle inequality is essential for proving properties like the consistency of an estimator. It allows one to bound the total error between an estimate and the truth by introducing an intermediate 'population-level' proxy, ensuring that convergence to the proxy implies convergence to the truth. Specifically:
\begin{equation} \label{eq:d-triangle}
d_{\Lp}(\hat{\G}_n, \G) \leq \underbrace{d_{\Lp}(\hat{\G}_n, \G_{\infty})}_{\text{variability}} + \underbrace{d_{\Lp}(\G_{\infty}, \G)}_{\text{approx. error}}
\end{equation}
where $\hat{\G}_n$ is the estimate, $\G_\infty$ is the intermediate population graph, and $\G$ is the true graph. Algorithmically, the triangle inequality facilitates enumerating graphs. For instance, to enumerate all graphs within distance $k$ of a given graph $\G$, it suffices to expand the neighborhood of all graphs at distance $k-1$ to $\G$. Without this property, the enumeration can be computationally prohibitive. In  
\cref{sec:robust-adj}, we use this important computational property to assess the robustness of adjustment sets to model errors. 

%consider a sensitivity application where 

%The `shortest path length' is a standard way to measure distance in posets \citep{Monjardet1981MetricsOP, Fishburn1993CombinatoricsAP, Brualdi1995CodesWA}, although its use has primarily been confined to the order theory literature. 

%As we show in \cref{thm:metric}, it naturally defines a metric.
%From a statistical perspective, when proving consistency or convergence of an estimator, one often introduces an intermediate ``ideal" object (e.g., the population version of the estimator) to bridge the gap between the data and the truth. The triangle inequality ensures that the distance between an estimated graph and the true graph can be controlled via the intermediate graph. From an algorithmic perspective, the triangle inequality helps in enumerating graphs within a small distance of a reference graph. For example, when the distance takes integer values, one can use the triangle inequality to enumerate all graphs within distance two of a reference graph by first finding all graphs at distance one, and then enumerating all graphs that are distance one from those. Without the triangle inequality, this decomposition is no longer possible, making enumeration far less efficient.

\begin{figure}[htbp]
\centering
\begin{minipage}{\textwidth}
  \centering
  \subfloat[CPDAGs]{\includegraphics[width=.46\columnwidth]{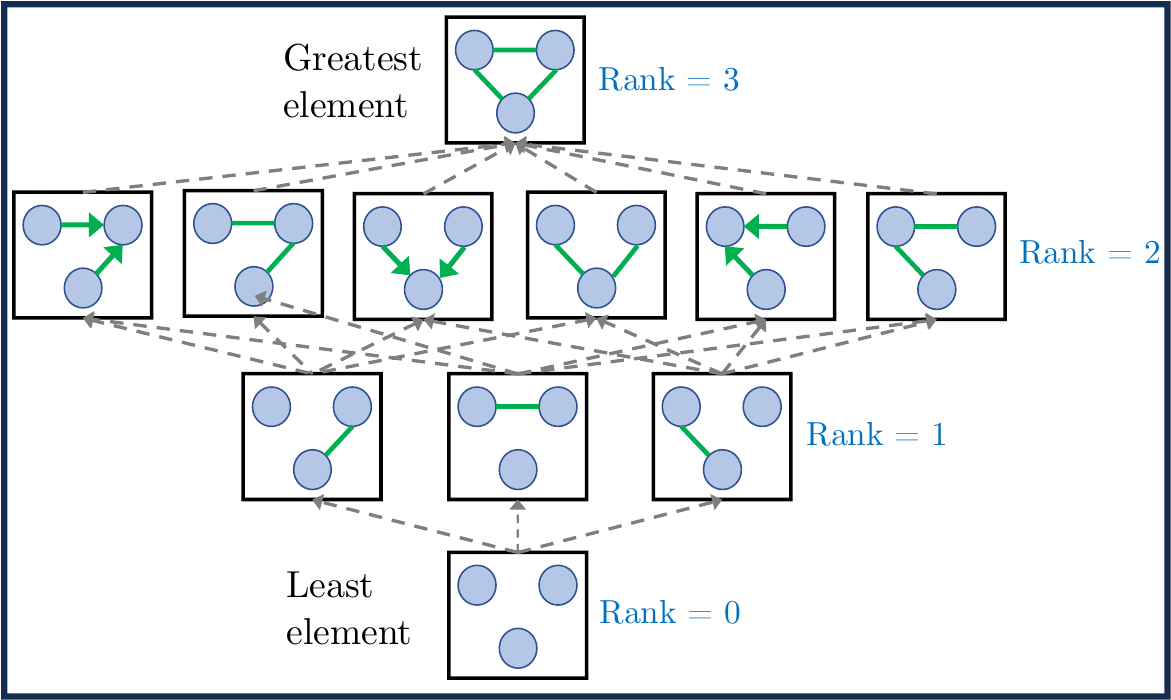}}
  \hfill 
  \subfloat[DAGs]{\includegraphics[width=.505\columnwidth]{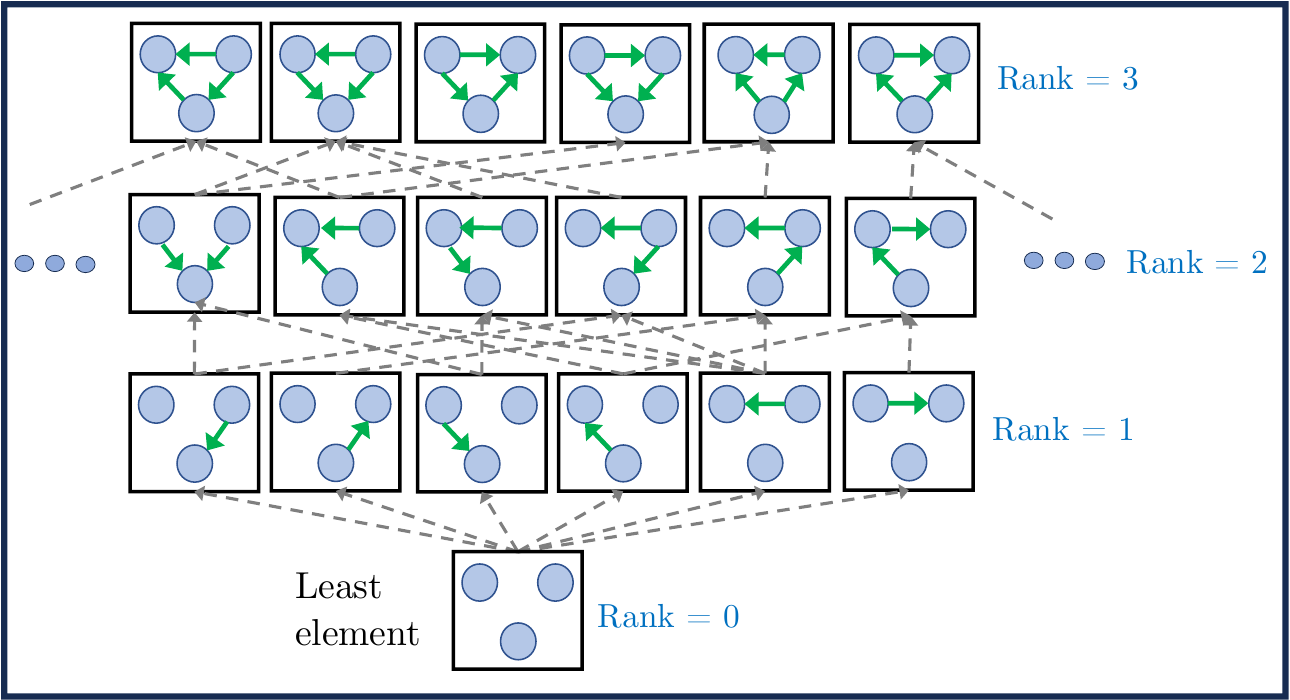}}\\
  \subfloat[UGs]{\includegraphics[width=.46\columnwidth]{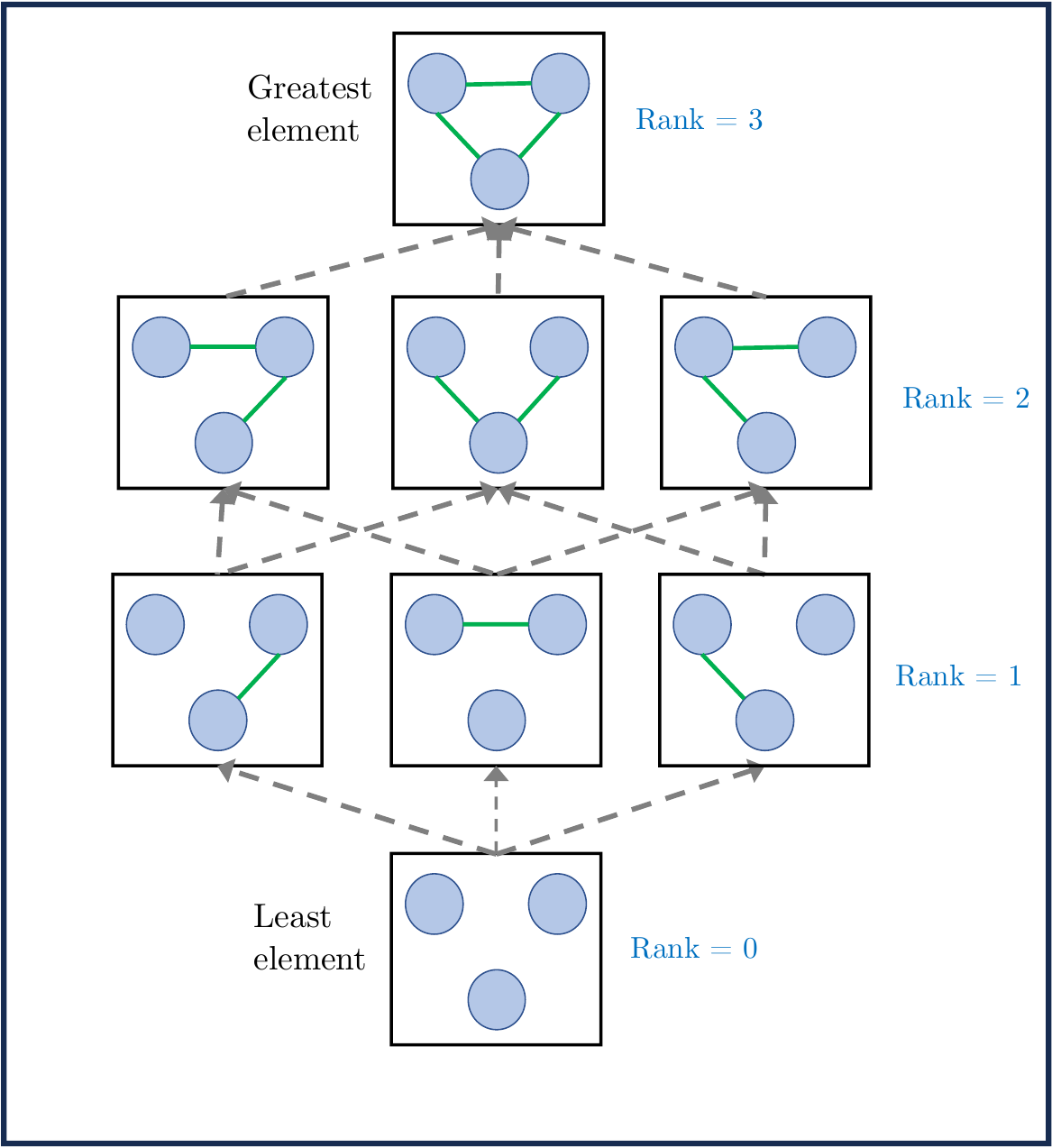}}
  \hfill
  \subfloat[MPDAGs]{\includegraphics[width=.51\columnwidth]{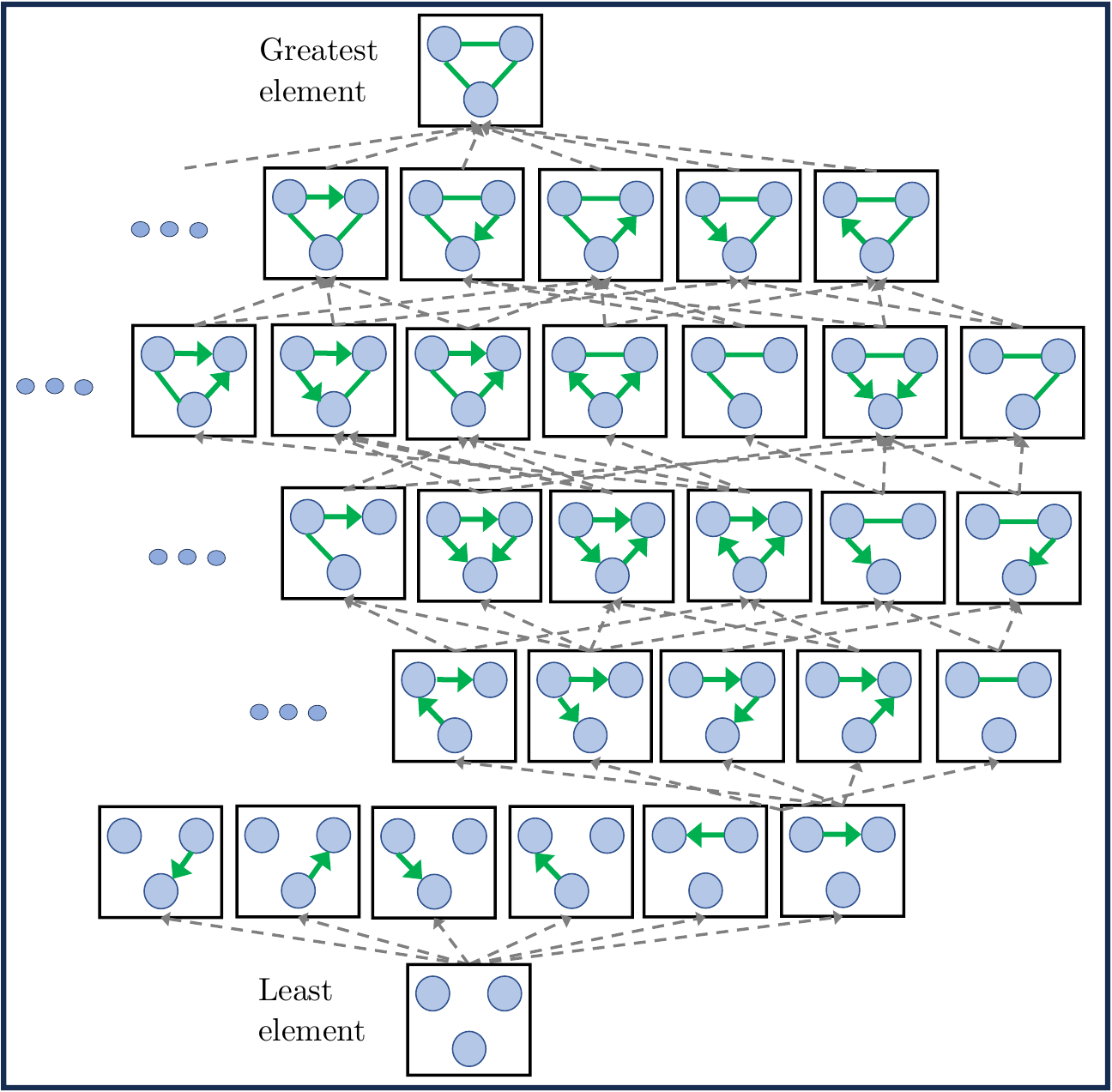}}
\end{minipage}
\caption{Hasse diagrams for four different statistical graph classes over vertex set $\{1,2,3\}$: each box represents a graph, and each dashed arrow represents a covering relation, i.e., we draw $\G_1  \dashrightarrow  \G_2$ pointing upwards if $\G_2$ covers $\G_1$. We can determine whether $\G_1 \preceq \G_2$ by checking whether there is a directed path from $\G_1$ to $\G_2$. In (b) and (d), only part of the poset is shown for simplicity. Each poset has a least element; posets (a), (c) and (d) also have a greatest element. Posets (a), (b) and (c) are graded, while (d) is not graded. 
The poset of ADMGs is not shown here: because it is a Cartesian product of posets (b) and (c), it can be drawn accordingly; see, e.g., \citet[pp.~246-247]{stanley2011enumerative}. }
\label{fig:hasse}
\end{figure}

\subsection{Poset structures that facilitate computing the model-oriented distance}
\label{sec:simplification}

Since the model-oriented distance corresponds to the shortest path in the poset's Hasse diagram, it can, in principle, be computed using search algorithms like A*. However, a naive search is often computationally prohibitive, as it requires repeatedly generating all neighbors for numerous candidate paths. Hence, we investigate structural properties of the poset to save computation --- by deriving a closed-form expression for distance, characterizing the neighbors, or pruning the search space. In this subsection, we analyze several such structural properties, which will be studied in \cref{sec:m_distance_Graphs} for each type of graphs.

%\smallskip The following concepts will be essential to our study. Let $\Lp = (\g, \preceq)$ be a model-oriented poset of graph class $\mathfrak{G}$ and model map $\M$.

%\noindent\textbf{Greatest element}: The poset has a \emph{greatest element} $\Ggr \in \g$ if $\Ggr \succeq \G$ for every $\G \in \g$.
%
%similarly, $\Lp$ has a \emph{least element} $\Glst \in \g$ if $\Glst \preceq \G$ for every $\G \in \g$.
%By the anti-symmetry of the partial order, if a greatest (least) element exists it must be unique.
%Besides, a graph $\G' \in \g$ is called a \emph{minimal} (\emph{maximal}) element if there exists no $\G \in \g$ such that $\G \prec \G'$ ($\G \succ \G'$). There may exist more than one minimal (maximal) element.
% Throughout, we assume $\Lp$ is connected (\cref{cond:connect}). 

\subsubsection{Graded poset} 
A connected poset $\Lp$ is \emph{graded} if it admits a function $\rank: \g \rightarrow \mathbb{N}$ such that (i) $\G_1 \coverby \G_2$ implies $\rank(\G_2) = \rank(\G_1)+1$, and (ii) $\rank(\G)=0$ holds for some minimal element $\G$. 
It can be shown that if it exists, such a rank function is unique. 

If the model-oriented poset is graded, we can exploit this property to simplify computation in two ways. First, we obtain the following closed form characterization of the model-oriented distance between graphs comparable within the poset.

\begin{proposition}[Distance between comparable graphs in a graded poset]\label{prop:graded}
Suppose $\Lp= (\mathfrak{G},\preceq)$ is a graded poset with rank function $\rank(\cdot)$. 
For any $\G_s,\G_t \in \mathfrak{G}$ with $\G_s \succeq \G_t$, every downward path from $\G_s$ to $\G_t$ has length $\rank(\G_s)-\rank(\G_t)$, which also equals $d_{\Lp}(\G_s,\G_t)$.
\end{proposition}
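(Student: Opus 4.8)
The plan is to establish the two assertions of the proposition in turn: first, that \emph{every} downward path from $\G_s$ to $\G_t$ has length exactly $\rank(\G_s)-\rank(\G_t)$, and second, that this common length equals the shortest-path distance $d_{\Lp}(\G_s,\G_t)$. The first assertion is a one-line telescoping consequence of the grading property. The second splits into an upper bound, obtained by exhibiting a downward path, and a lower bound, showing that no path in the Hasse diagram (which may move both up and down) can be shorter.

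For the first assertion, I would write an arbitrary downward path as $\G_s = \G^{(0)} \cover \G^{(1)} \cover \cdots \cover \G^{(m)} = \G_t$, so that $\G^{(i+1)} \coverby \G^{(i)}$ for each $i$. By the defining property of the rank function, each covering step drops the rank by exactly one, $\rank(\G^{(i+1)}) = \rank(\G^{(i)}) - 1$, and summing over $i=0,\dots,m-1$ telescopes to $\rank(\G_t) = \rank(\G_s) - m$. Hence $m = \rank(\G_s)-\rank(\G_t)$, independently of which downward path was chosen. To turn this into the upper bound $d_{\Lp}(\G_s,\G_t) \le \rank(\G_s)-\rank(\G_t)$, I first note that such a downward path exists: since $\G_s \succeq \G_t$ and $\g$ is finite, a standard fact about finite posets gives a saturated chain from $\G_t$ up to $\G_s$ (take any maximal chain in the interval $[\G_t,\G_s]$, whose consecutive elements are necessarily covering pairs), and reversing it yields a downward path of the stated length.

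For the lower bound, I would take an arbitrary path $\G_s = \G^{(0)}, \dots, \G^{(m)} = \G_t$ in the Hasse diagram, each consecutive pair being a covering relation in either direction. Because every Hasse-diagram edge is a covering relation, each step changes the rank by exactly $\pm 1$. Letting $u$ and $w$ be the numbers of upward and downward steps, summing the rank increments gives $u - w = \rank(\G_t)-\rank(\G_s)$, i.e.\ $w - u = \rank(\G_s)-\rank(\G_t)$, while $u + w = m$. Eliminating $u$ yields $m = \rank(\G_s)-\rank(\G_t) + 2u \ge \rank(\G_s)-\rank(\G_t)$. Minimizing over all paths gives $d_{\Lp}(\G_s,\G_t) \ge \rank(\G_s)-\rank(\G_t)$, and combining with the upper bound closes the argument.

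The main obstacle is not conceptual but lies in the lower-bound step: a shortest path is a priori permitted to wander upward above $\G_s$ or below $\G_t$, and one must rule out that such detours ever help. The sign-counting identity $w - u = \rank(\G_s)-\rank(\G_t)$ settles this cleanly, since it shows every upward step is forced to be compensated by an additional downward step, so any deviation from a monotone downward path strictly increases the length. The only other point requiring care is invoking the existence of a saturated chain between comparable elements in a finite poset, which underwrites the claim that downward paths between $\G_s$ and $\G_t$ exist at all.
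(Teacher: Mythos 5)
Your proof is correct and follows essentially the same route as the paper's: a telescoping sum of rank increments along an arbitrary path, the observation that each Hasse edge changes the rank by $\pm 1$ so that the length exceeds $\rank(\G_s)-\rank(\G_t)$ by twice the number of ``wrong-direction'' steps, and achievability via a monotone (saturated) chain. The only difference is that you explicitly justify the existence of a saturated chain between comparable elements in a finite poset, which the paper leaves implicit.
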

This is a well-known result that follows from the property that in a graded poset all \emph{maximal chains} between two comparable elements have equal length. A subset of $\g$ is a \emph{chain} (i.e., totally ordered set) if every pair of elements are comparable. A chain is \emph{maximal} if it is not contained by a larger chain. For completeness, we provide a short proof in \cref{proof:prop_graded}. 
Gradedness and the rank can greatly facilitate the enumeration of neighborhood, which is the basic routine to any search algorithm. Under gradedness, \cref{eqs:neighbor} becomes $\{\G': \G' \preceq \G,\, \rank(\G') = \rank(\G) - 1\} \cup \{\G': \G' \succeq \G,\, \rank(\G') = \rank(\G) + 1\}$.

\subsubsection{Down-up and up-down distance, semi-lattice, semimodularity}
To reduce the number of paths we need to explore, it is natural to consider the shortest path among a restricted set of paths between two graphs in the Hasse diagram. 
Specifically, for $\G_s, \G_t \in \g$, we consider the set of \emph{down-up} and \emph{up-down} paths between them, denoted by $\sP_{\Lp, \du}{(\G_s,\G_t)}$  and $\sP_{\Lp, \ud}{(\G_s,\G_t)}$ respectively. 
A path is down-up if it takes the form of 
\[ \G_s = x_0 \cover \cdots \cover x_{i^\ast} \coverby \cdots \coverby x_d = \G_t, \]
where $i^{\ast} \in \{0,\dots,d\}$ is the inflection point; downward and upward paths are special cases of down-up paths. Conversely, an up-down path takes the form of 
\[ \G_s = x_0 \coverby \cdots \coverby x_{i^\ast} \cover \cdots \cover x_d = \G_t. \]
The next result characterizes when down-up (up-down) paths connect every pair of graphs. 

%there exists $i^\ast \in \{0,1,\dots,d\}$ such that the sequence is decreasing up to $i^\star$, i.e., $x_{j+1} \preceq x_{j}$ for all $j < i^\star$, and increasing from $i^\star$ onward, i.e., $x_{j} \preceq x_{j+1}$ for all $j \geq i^\star$. Conversely, the path is called up-down if there exists such an index $i^\star$ where the sequence is increasing before $i^\star$ and decreasing afterwards; that is, $x_{j} \preceq x_{j+1}$ for all $j < i^\star$, and $x_{j+1} \preceq x_{j}$ for all $j \geq i^\star$. 

\begin{proposition}[Existence of down-up and up-down paths] \label{prop:du-up-exist}
The set of down-up paths $\sP_{\Lp, \du}{(\G_1,\G_2)}$ is non-empty for all $\G_1,\G_2\in\mathfrak{G}$ if and only if $\Lp$ has a least element (\cref{cond:least}). Similarly, the set of up-down paths $\sP_{\Lp, \ud}{(\G_1,\G_2)}$ is non-empty for all $\G_1,\G_2\in\mathfrak{G}$ if and only if $\Lp$ has a greatest element (\cref{cond:least}$'$).
\end{proposition}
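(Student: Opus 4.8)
The plan is to prove the statement for down-up paths and the least element; the dual statement for up-down paths and the greatest element then follows immediately by reversing the partial order (replacing $\preceq$ with $\succeq$ throughout, which interchanges down-up and up-down paths and turns \cref{cond:least} into \cref{cond:least}$'$). The organizing observation is that the inflection point of a down-up path is a common lower bound of its two endpoints: if
\[
\G_1 = x_0 \cover \cdots \cover x_{i^\ast} \coverby \cdots \coverby x_d = \G_2,
\]
then the descending segment gives $x_{i^\ast} \preceq \G_1$ and the ascending segment gives $x_{i^\ast} \preceq \G_2$. Conversely, any common lower bound can be made the inflection point of some down-up path. Thus $\sP_{\du}(\G_1,\G_2)\neq\emptyset$ is equivalent to $\G_1,\G_2$ having a common lower bound, and the whole proposition reduces to a statement about common lower bounds in a finite poset.

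For the ``if'' direction, suppose $\Lp$ has a least element $\Glst$, so $\Glst \preceq \G_1$ and $\Glst \preceq \G_2$ for any $\G_1,\G_2 \in \g$. I would invoke the standard fact that in a finite poset any comparable pair $a \preceq b$ is joined by a saturated chain $a = c_0 \coverby c_1 \coverby \cdots \coverby c_k = b$; one obtains it by repeatedly taking a maximal element $c$ of $\{x : a \preceq x \prec b\}$, which is necessarily covered by $b$ (otherwise an intermediate $z$ would contradict maximality), and recursing on $[a,c]$ using finiteness. Applying this to $\Glst \preceq \G_1$ and $\Glst \preceq \G_2$ gives a descending chain from $\G_1$ to $\Glst$ followed by an ascending chain from $\Glst$ to $\G_2$; their concatenation is a down-up path with inflection point $\Glst$. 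Degenerate cases such as $\G_1 = \Glst$ are covered, since pure chains count as down-up paths.

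The more interesting direction is ``only if''. Assuming $\sP_{\du}(\G_1,\G_2)\neq\emptyset$ for every pair, the inflection-point observation shows that every pair admits a common lower bound, i.e.\ $\Lp$ is downward directed. I would then reach a least element in two steps. First, $\Lp$ has a \emph{unique} minimal element: distinct minimal elements $m_1,m_2$ would share a common lower bound $c$, and minimality would force $c = m_1$ and $c = m_2$, whence $m_1 = m_2$. Second, in a finite poset a unique minimal element $m$ is automatically the least element, because every $\G \in \g$ dominates some minimal element of the finite nonempty down-set $\{x : x \preceq \G\}$ (which is also minimal in $\Lp$), and that element can only be $m$; hence $m \preceq \G$ for all $\G$, giving \cref{cond:least}.

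The main obstacle is this ``only if'' direction, specifically the passage from pairwise common lower bounds to a genuine least element. This is exactly where finiteness of $\g$ is essential: downward directedness alone does not yield a least element in an infinite poset, so the argument must combine the uniqueness-of-minimal-element observation with the finite-poset fact that every element dominates a minimal one. The remaining ingredient, existence of saturated chains between comparable elements, is routine for finite posets and is the only nontrivial fact needed for the ``if'' direction.
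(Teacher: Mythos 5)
Your proof is correct and follows essentially the same route as the paper's: both directions hinge on the observation that a down-up path exists precisely when the endpoints share a common lower bound, and the ``only if'' direction in both cases comes down to the fact that a finite poset without a least element has two distinct minimal elements with no common lower bound (you phrase this contrapositively via uniqueness of the minimal element, which is the same argument). Your write-up simply makes explicit the finiteness facts (saturated chains between comparable elements; every element dominates a minimal one) that the paper leaves implicit.
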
 

The posets of all graph types (UGs, DAGs, ADMGs, CPDAGs and MPDAG) we consider in this paper have the empty graph as the least element. Further, except for DAGs and ADMGs, all of them also have a greatest element; see \cref{tab:summary}. 
The down-up and up-down paths lead to the definition of down-up and up-down distances.

\begin{definition}[Down-up and up-down distance] 
Under \cref{cond:least}, the down-up distance is defined as $d_{\Lp,\du}(\G_s,\G_t) := \min\{\mathrm{length}(p): p \in  \sP_{\Lp, \du}{(\G_s,\G_t)}\}$ for every $\G_s, \G_t \in \g$.
Under \cref{cond:least}$'$,  the up-down distance is defined as $d_{\Lp,\ud}(\G_s,\G_t) := \min\{\mathrm{length}(p): p \in  \sP_{\Lp, \ud}{(\G_s,\G_t)}\}$.
\end{definition}

Unlike $d_{\Lp}$, the down-up and up-down distances may not be metrics as they are not guaranteed to obey the triangle inequality. Nevertheless, by definition, they are upper bounds on $d_{\Lp}$ and are generally easier to compute. When $\Lp$ is graded, the down-up or up-down distances take a closed form.

\begin{proposition}[Down-up and up-down distances for graded posets]
\label{prop:down_up_up_down_graded}
Suppose $\Lp = (\g,\preceq)$ is graded with rank function $\rank(\cdot)$. Then, under \cref{cond:least}, we have
\[ d_{\Lp,\du}(\G_s,\G_t) = \rank(\G_s)+\rank(\G_t)-2\max_{\G \preceq \G_s,\G_t} \rank(\G), \quad \G_s,\G_t \in \g. \]
Similarly, under \cref{cond:least}$'$, we have
\[ d_{\Lp,\ud}(\G_s,\G_t) = 2\min_{\G \succeq \G_s,\G_t} \rank(\G) - \rank(\G_s) - \rank(\G_t), \quad \G_s,\G_t \in \g. \]
\end{proposition}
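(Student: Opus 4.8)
The plan is to leverage gradedness so that the length of any down-up (or up-down) path depends only on the rank of its inflection point, and then to optimize over the admissible inflection points. First I would recall the defining property of a graded poset: whenever $x \coverby y$ we have $\rank(y) = \rank(x)+1$, so along any chain of cover relations the rank changes by exactly one at each step. This single fact converts path lengths into rank differences.

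For the down-up distance, consider an arbitrary down-up path
\[ \G_s = x_0 \cover \cdots \cover x_{i^\ast} \coverby \cdots \coverby x_d = \G_t \]
with inflection point $x_{i^\ast}$. The descending portion consists of the covers $x_j \cover x_{j+1}$ for $j < i^\ast$, so by gradedness it has length $\rank(\G_s) - \rank(x_{i^\ast})$; likewise the ascending portion has length $\rank(\G_t) - \rank(x_{i^\ast})$. Adding these, every down-up path through $x_{i^\ast}$ has length $\rank(\G_s) + \rank(\G_t) - 2\rank(x_{i^\ast})$. Since the path descends from $\G_s$ to $x_{i^\ast}$ and ascends from $x_{i^\ast}$ to $\G_t$, we have $x_{i^\ast} \preceq \G_s$ and $x_{i^\ast} \preceq \G_t$, i.e., the inflection point is a common lower bound of $\G_s$ and $\G_t$. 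Minimizing the length therefore reduces to maximizing $\rank(x_{i^\ast})$ over common lower bounds.

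To turn this bound into an equality I would establish realizability: every common lower bound $\G \preceq \G_s, \G_t$ occurs as the inflection point of some down-up path. Because $\G$ and $\G_s$ are comparable, \cref{prop:graded} guarantees a saturated descending chain from $\G_s$ down to $\G$ (of length $\rank(\G_s)-\rank(\G)$); symmetrically there is a saturated ascending chain from $\G$ up to $\G_t$. Concatenating them yields a down-up path with inflection $\G$. Under \cref{cond:least} the set of common lower bounds is non-empty (the least element is one), so the maximum is attained and
\[ d_{\Lp,\du}(\G_s,\G_t) = \rank(\G_s) + \rank(\G_t) - 2\max_{\G \preceq \G_s,\G_t}\rank(\G). \]

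The up-down case is entirely dual: the inflection point of an up-down path is a common upper bound, the length equals $2\rank(x_{i^\ast}) - \rank(\G_s) - \rank(\G_t)$, minimizing the length means minimizing $\rank(x_{i^\ast})$ over common upper bounds, and \cref{cond:least}$'$ supplies non-emptiness via the greatest element. The bulk of the argument is routine rank bookkeeping, so the only mildly delicate step is the realizability claim, namely that each optimal common lower (upper) bound can genuinely be threaded by saturated chains to both endpoints; this is precisely where gradedness is invoked beyond the length computation, and it is exactly the content I would quote from \cref{prop:graded}.
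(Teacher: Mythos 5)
Your proof is correct and follows essentially the same route as the paper's: use gradedness to express the length of any down-up (up-down) path as $\rank(\G_s)+\rank(\G_t)-2\rank(\G)$ in terms of the inflection point $\G$, observe that $\G$ ranges over common lower (upper) bounds, and optimize. Your explicit realizability step (every common lower bound is attained as an inflection point via saturated chains) is a detail the paper leaves implicit, but it does not change the argument.
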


For a further simplification, we require the notions of meet and join semi-lattice structures: graph $\G_3 \in \g$ is the \emph{least upper bound} (or \emph{join}) of $\G_1$ and $\G_2$, denoted as $\G_1 \join \G_2$, if $\G_3 \succeq \G_1, \G_2$ and $\G_3 \preceq \G'$ for every $\G'$ satisfying $\G' \succeq \G_1, \G_2$. The \emph{greatest lower bound} (or \emph{meet}) of $\G_1$ and $\G_2$ is defined similarly and is denoted as $\G_1 \meet \G_2$. 
Poset $\Lp$ is a \emph{join (meet) semi-lattice} if $\G \join \G'$ ($\G \meet \G'$) exists for every $\G, \G' \in \g$.
\begin{corollary}[Simplification with semi-lattice structure] \label{cor:du-semilat}
Suppose $\Lp = (\g,\preceq)$ is graded with rank function $\rank(\cdot)$. Under \cref{cond:least}, if $\Lp$ is a meet semi-lattice, we have 
\[ d_{\Lp,\du}(\G_s,\G_t) = \rank(\G_s)+\rank(\G_t)- 2 \rank(\G_s \meet \G_t), \quad \G_s,\G_t \in \g. \]
Similarly, under \cref{cond:least}$'$, if $\Lp$ is also a join semi-lattice, we have
\[ d_{\Lp,\ud}(\G_s,\G_t) = 2 \rank(\G_s \join \G_t) - \rank(\G_s) - \rank(\G_t), \quad \G_s,\G_t \in \g. \]
\end{corollary}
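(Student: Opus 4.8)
The plan is to derive the statement directly from \cref{prop:down_up_up_down_graded}, reducing everything to the identification of the optimal inflection point. That proposition already gives, under \cref{cond:least},
\[ d_{\Lp,\du}(\G_s,\G_t) = \rank(\G_s) + \rank(\G_t) - 2\max_{\G \preceq \G_s,\G_t}\rank(\G), \]
so the only thing left to prove is the identity $\max_{\G \preceq \G_s,\G_t}\rank(\G) = \rank(\G_s \meet \G_t)$; substituting it yields the claimed formula. The entire content of the corollary is therefore that, when the meet exists, the largest-rank common lower bound is the meet itself.

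First I would record a monotonicity property of the rank function: in a graded poset, $\G \preceq \G'$ implies $\rank(\G) \le \rank(\G')$. This is not immediate from the definition, which only constrains $\rank$ across cover relations, so I would argue it via saturated chains: in a finite poset any $\G \prec \G'$ can be joined by a chain $\G = x_0 \coverby x_1 \coverby \cdots \coverby x_m = \G'$ with $m \ge 1$ (obtained by repeatedly picking a minimal element strictly above the current one and below $\G'$), and applying the rank increment at each cover gives $\rank(\G') = \rank(\G) + m \ge \rank(\G)$.

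Next I would establish the identity by two inequalities. Since $\Lp$ is a meet semi-lattice, $\G_s \meet \G_t$ exists and is a common lower bound of $\G_s$ and $\G_t$, hence a feasible point of the maximization, which gives $\max_{\G \preceq \G_s,\G_t}\rank(\G) \ge \rank(\G_s \meet \G_t)$. Conversely, any $\G$ with $\G \preceq \G_s$ and $\G \preceq \G_t$ is a common lower bound, so by the defining property of the meet as the greatest lower bound $\G \preceq \G_s \meet \G_t$, and monotonicity yields $\rank(\G) \le \rank(\G_s \meet \G_t)$; taking the maximum over such $\G$ gives the reverse inequality. The up-down case follows by the order-dual argument, swapping \cref{cond:least} for \cref{cond:least}$'$, meets for joins, lower bounds for upper bounds, and the maximum for a minimum.

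The proof is essentially bookkeeping once \cref{prop:down_up_up_down_graded} is available; the one step that deserves care is the monotonicity of $\rank$, since gradedness is defined only through cover relations. Bridging that gap requires the existence of saturated chains between comparable elements, which is where finiteness of $\g$ enters. I do not anticipate any genuine obstacle beyond this.
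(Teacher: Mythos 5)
Your proposal is correct and follows essentially the same route as the paper: reduce to \cref{prop:down_up_up_down_graded} and observe that $\G_s \meet \G_t$ attains the maximum rank among common lower bounds (dually for the join). The paper simply asserts this last fact, whereas you additionally justify the monotonicity of $\rank$ along comparable pairs via saturated chains; that is a harmless and correct elaboration, not a different argument.
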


As described earlier, down-up or up-down distance is an upper bound on the model-oriented distance. The next key result characterizes the poset structure that makes them equal. 

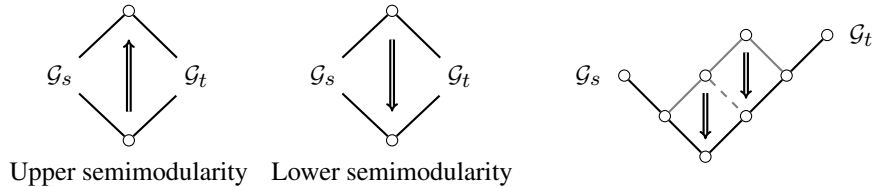
\begin{figure}[htb]
\centering
\begin{tikzpicture}
\tikzset{rv/.style={circle,inner sep=1.5pt,draw,font=\sffamily},
lv/.style={circle,inner sep=1pt,fill=gray!20,draw,font=\sffamily},
fv/.style={rectangle,inner sep=1.5pt,fill=gray!20,draw,font=\sffamily},
node distance=0.8cm, >=stealth}
\begin{scope} %---------
\node[] (s) {$\G_s$};
\node[rv, above right=of s] (v) {};
\node[rv, below right=of s] (w) {};
\node[below right=of v] (t) {$\G_t$};

\node[below=1mm of w,] (lb) {\small Upper semimodularity};

\draw[-, thick] (s) -- (v);
\draw[-, thick] (t) -- (v);
\draw[-, thick] (s) -- (w);
\draw[-, thick] (t) -- (w);
\draw[->, -{Implies}, double, thick, shorten >= 3mm, shorten <= 3mm] (w)  -- (v);
\end{scope}
\begin{scope}[xshift=3.5cm] %---------
\node[] (s) {$\G_s$};
\node[rv, above right=of s] (v) {};
\node[rv, below right=of s] (w) {};
\node[below right=of v] (t) {$\G_t$};

\node[below=1mm of w,] (lb) {\small Lower semimodularity};

\draw[-, thick] (s) -- (v);
\draw[-, thick] (t) -- (v);
\draw[-, thick] (s) -- (w);
\draw[-, thick] (t) -- (w);
\draw[->, -{Implies}, double, thick, shorten >= 3mm, shorten <= 3mm] (v)  -- (w);
\end{scope}
\begin{scope}[xshift=7.5cm, node distance=6mm] %---------
\node[rv] (s) {};
\node[left=1mm of s] (s-label) {$\G_s$};
\node[rv, below right=of s] (a) {};
\node[rv, above right=of a] (b) {};
\node[rv, above right=of b] (c) {};  % inflection
\node[rv, below right=of c] (d) {};
\node[rv, above right=of d] (t) {};
\node[rv, below right=of a] (e) {};
\node[rv, above right=of e] (f) {};  
\node[right=1mm of t] (t-label) {$\G_t$};

\draw[->, -{Implies}, double, thick, shorten >= 1.5mm, shorten <= 1.5mm] (b)  -- (e);
\draw[->, -{Implies}, double, thick, shorten >= 1.5mm, shorten <= 1.5mm] (c)  -- (f);
\draw[-, thick] (s) -- (a) -- (e) -- (f) -- (d) -- (t);
\draw[-, thick, color=black!50] (a) -- (b) -- (c) -- (d);
\draw[-, thick, dashed, color=black!50] (b) -- (f);
\end{scope}
\end{tikzpicture}
\caption{Upper and lower semimodularity of a poset. Between any $\G_s, \G_t$ in a lower semimodular poset, a shortest path can always be transformed into a down-up path of the same length.}
\label{fig:semimod}
\end{figure}

\begin{definition}[Semimodularity; see \cref{fig:semimod}] \label{def:semimod}
Poset $\Lp = (\g, \preceq)$ is \emph{lower semimodular} if for every $\G_s, \G_t, \G$ such that $\G \cover \G_s$, $\G \cover \G_t$ and $\G_s \neq \G_t$, there exists some $\G'$ such that $\G_s \cover \G'$ and $\G_t \cover \G'$; similarly, $\Lp$ is \emph{upper semimodular} if the same statement holds when `$\cover$' is replaced with `$\coverby$'.
\end{definition}

\begin{theorem}[Simplification for semimodular posets]\label{thm:semimod}
Given a graph class $\g$ and a model map $\M$ that satisfy \cref{cond:inject}, let $\Lp=(\mathfrak{G},\preceq)$ be the model-oriented poset. 
Under \cref{cond:least} (least element), $\Lp$ is lower semimodular if and only if $d_{\Lp,\du}(\G_s,\G_t) = d_{\Lp}(\G_s,\G_t)$ holds for every $\G_s,\G_t \in \g$.
Similarly, under \cref{cond:least}$'$ (greatest element), $\Lp$ is upper semimodular if and only if $d_{\Lp,\ud}(\G_s,\G_t) = d_{\Lp}(\G_s,\G_t)$ holds for every $\G_s,\G_t \in \g$.
\end{theorem}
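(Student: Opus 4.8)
The plan is to regard $d_{\Lp}$ as the shortest-path length in the (undirected) Hasse diagram, where each edge is a covering relation traversed as either a down-step ($\cover$) or an up-step ($\coverby$), and to use the observation that a down-up path is exactly a path whose step sequence contains no \emph{peak}, i.e.\ no up-step immediately followed by a down-step (a local pattern $x \coverby y \cover z$). Since down-up paths are a subset of all paths, $d_{\Lp}(\G_s,\G_t) \le d_{\Lp,\du}(\G_s,\G_t)$ holds trivially, so the content of the ``if'' direction is the reverse inequality, while the ``only if'' direction I would prove by contraposition.

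For the direction \emph{lower semimodular} $\Rightarrow d_{\Lp,\du}=d_{\Lp}$, I would take a shortest path $P$ from $\G_s$ to $\G_t$ and repeatedly eliminate peaks by a local move. Whenever $P$ contains a peak $x \coverby y \cover z$, the two down-covers $y \cover x$ and $y \cover z$ (with $x \neq z$) trigger lower semimodularity (\cref{def:semimod}), yielding $\G' =: w$ with $x \cover w$ and $z \cover w$; replacing the subpath $x \to y \to z$ by the valley $x \to w \to z$ leaves the length unchanged. To force termination I would fix a strictly order-preserving map $f:\g\to\mathbb{R}$ (any linear extension supplies one, using finiteness of $\g$) and track $\Phi(P)=\sum_i f(x_i)$: since $w \prec x \prec y$ gives $f(w)<f(y)$, each move strictly decreases $\Phi$, and as there are only finitely many walks of the fixed length, the process halts at a peak-free, hence down-up, path of the same length. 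The delicate point is that the move is licensed only when $x\neq z$; I would rule out the excluded case by noting that no move ever creates a \emph{backtrack} (a step immediately retraced), for deleting such a backtrack would produce a strictly shorter $\G_s$--$\G_t$ walk, contradicting the minimal length that every move preserves. Hence every peak encountered has distinct endpoints, the move always applies, and we obtain $d_{\Lp,\du}\le d_{\Lp}$, giving equality.

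For the converse I would argue the contrapositive. If $\Lp$ is not lower semimodular, choose $\G,\G_s,\G_t$ with $\G\cover\G_s$, $\G\cover\G_t$, $\G_s\neq\G_t$, yet admitting no common lower cover $\G'$ (no $\G'$ with $\G_s\cover\G'$ and $\G_t\cover\G'$). Then $\G_s\coverby\G\cover\G_t$ is a length-$2$ walk, so $d_{\Lp}(\G_s,\G_t)\le 2$; moreover $\G_s,\G_t$ cannot be neighbors, since either covering relation between them would place one strictly between the other and $\G$, contradicting that $\G$ covers each, whence $d_{\Lp}(\G_s,\G_t)=2$. On the other hand a down-up path exists by \cref{cond:least} but cannot have length $\le 2$: length $1$ would make $\G_s,\G_t$ neighbors, and a length-$2$ down-up path $\G_s\cover\G''\coverby\G_t$ would exhibit a common lower cover $\G''$, which we assumed does not exist. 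Therefore $d_{\Lp,\du}(\G_s,\G_t)\ge 3>2=d_{\Lp}(\G_s,\G_t)$, breaking the claimed equality.

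Finally, the upper-semimodular / up-down statement follows by order duality: I would apply the lower-semimodular result to the dual poset $\Lp^\ast=(\g,\succeq)$, whose least element is the greatest element of $\Lp$ guaranteed by \cref{cond:least}$'$, whose down-up paths are precisely the up-down paths of $\Lp$, and whose lower semimodularity is the upper semimodularity of $\Lp$, using that the Hasse-diagram shortest-path length is insensitive to orientation so that $d_{\Lp^\ast}=d_{\Lp}$ and $d_{\Lp^\ast,\du}=d_{\Lp,\ud}$. The main obstacle is the fixed-length bookkeeping in the forward direction: establishing that peak removal neither lengthens nor stalls the path (through the no-backtrack argument together with the strictly decreasing potential $\Phi$) is where the real work lies, whereas the converse and the dual are comparatively direct.
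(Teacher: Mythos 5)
Your proof is correct, and the converse and duality steps coincide with the paper's; the difference lies in how the forward direction is organized. The paper proves lower semimodularity $\Rightarrow d_{\Lp,\du}=d_{\Lp}$ by induction on $d_{\Lp}(\G_s,\G_t)$: it picks, among the shortest paths, one with the fewest up-down segments, isolates the first maximal segment $\G_a \coverby \cdots \coverby \G_b \cover \cdots \cover \G_c$, and either invokes the inductive hypothesis on the strictly shorter subpath or (when the whole path is a single up-down segment) appeals to the peak-pushing picture of \cref{fig:semimod}. You instead run the peak-pushing move directly on an arbitrary shortest walk and certify termination with the potential $\Phi(P)=\sum_i f(x_i)$ built from a linear extension; the same local diamond-completion is the engine in both arguments, but your bookkeeping replaces the double minimization (shortest length, then fewest up-down segments) with a single strictly decreasing potential. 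Two points where your write-up is actually more careful than the paper's: you justify that every peak $x\coverby y\cover z$ has $x\neq z$ (via the no-backtrack property of shortest walks), which is needed before \cref{def:semimod} can be applied and which the paper leaves implicit, and you make the termination of the transformation in the paper's case~i explicit rather than deferring to the figure. What the paper's induction buys in exchange is that it never has to argue termination of an iterative rewriting process at all, since the heavy lifting is absorbed into the inductive hypothesis on shorter subpaths. Your handling of the converse by contraposition and of the up-down statement via the dual poset $(\g,\succeq)$ is equivalent to the paper's direct argument and its appeal to duality.
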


\cref{fig:semimod} illustrates the basic mechanism behind \cref{thm:semimod}: when $\Lp$ is semimodular, any shortest path can be transformed into a down-up or up-down path with the same length. 
We remark that, to our best knowledge, \cref{thm:semimod} weakens the conditions for existing results in the literature and is well-suited for studying model-oriented graph distances. For example, \citet[Theorem 8]{Monjardet1981MetricsOP} additionally requires the poset to be graded with a greatest element, while \citet[Proposition 2.5]{foldes2021distances} instead requires the poset to be a semi-lattice. We will use \cref{thm:semimod} to study the model-oriented distance for polytree CPDAGs and MPDAGs in \cref{sec:polytree-cpdags,sec:polytree-mpdags}.  

%However, the model-oriented posets for graphs may not satisfy these conditions. In fact, in \cref{sec:cpdags} we will use \cref{thm:semimod} to study polytree CPDAGs, where these earlier results are not applicable. 

\subsection{Relation to edge-by-edge distances}
\label{sec:edge-by-edge}
Recall that SHD is given by the number of edges that are mismatched between the two graphs. Different types of mismatches can receive different weights \citep{perrier2008finding}. For PDAGs, for example, there are two common variants of SHD \citep{tsamardinos2006max} defined as follows. Consider encoding a PDAG $\G$ over $\sV$ with adjacency matrix 
\[ A_{i,j} := \begin{cases} 1, \quad &\text{$i - j$ or $i \rightarrow j$} \\ 0, \quad &\text{otherwise} \end{cases}, \quad i,j \in \sV. \]

\begin{definition}[Structural Hamming distance for PDAGs] \label{def:SHD}
Let PDAGs $\G_s, \G_t$ over a vertex set be encoded by adjacency matrix $A^s, A^t$ respectively. The two variants of SHD are given by 
\[ \SHD_{1}(\G_s,\G_t):= \sum_{i<j} \mathbb{I}\left[A^s_{i,j}\neq A^{t}_{i,j} \text{ or }A^s_{j,i}\neq A^{t}_{j,i}\right], \quad
\SHD_{2}(\G_s,\G_t):= \sum_{i,j} \mathbb{I}\left[A^s_{i,j}\neq A^{t}_{i,j}\right], \]
where $\mathbb{I}[\cdot]$ is the indicator function. Note that $\SHD_2 \geq \SHD_1$ holds by definition. 
%Consider a pair of graphs $\G_s,\G_t$ that may have both undirected and directed edges. The first variant of SHD, denoted ${$\SHD_1$}(\G_s,\G_t)$, counts an error for each pair of nodes whose connectivity differs in any way The second variant, denoted $\SHD_2(\G_s,\G_t)$, assigns (i) a cost of $2$ when a node pair has an undirected edge in one graph but is disconnected in the other, or when the pair has directed edges in opposite directions; (ii) a cost of $1$ when a node pair in one graph has a directed edge and the other has no edge; and (iii) a cost of $0$ when the connectivity (including direction) is identical for the node pair across the two graphs. By construction, we always have: $\SHD_1(\G_s,\G_t) \leq \SHD_2(\G_s,\G_t)$.  
\end{definition}

%The first variant, which we refer to as $\SHD_1(\G_s,\G_t)$, counts an error for each pair of nodes whose connectivity differs in any way. 

% Adapted versions of these two variants can also be more appropriate for other graph classes. 
% Consider, for example, ADMGs which like PDAGs contain two edge-types ($\rightarrow,\leftrightarrow$) but as opposed to PDAGs are not simple graphs, that is, there may be both a directed edge and a bidirected edge between two nodes. Here, it is natural to treat the two edge types separately, by representing an ADMG by two adjacency matrices; one for directed edges and one for bidirected edges. A natural distance is then to compute $\SHD_2$ for the directed edges and $\SHD_1$ for the bidirected edges and then summing the two or, alternatively, use $\SHD_1$ for both. 
These variants of SHD beg the question which is best suited for a graph class, which is addressed by the next lemma. 
% This corollary assumes that any two graphs $\G_1 = (V, E_1)$ and $\G_2 = (V, E_2)$ inside $\mathfrak{G}$ satisfy $\G_1 \preceq \G_2$ if $E_1 \subseteq E_2$ --- that is, every edge in $\G_1$ is also present in $\G_2$. We will show that this type of partial order structure holds for the classes of UGs and DAGs.

\begin{lemma}[Edge-by-edge model-oriented distances] \label{corr:main}
Let $\g$ be a class of graphs with $k \geq 1$ edge types (see \cref{sec:graph_background}). Consider a poset $\Lp$ that orders graphs by subgraph containment, i.e., $\G_s = (\sV, \sE^1_s, \dots, \sE_s^k) \preceq \G_t = (\sV, \sE^1_t, \dots, \sE^k_t)$ if $\sE^i_s \subseteq \sE^i_t$ for $i=1,\dots,k$. 
Suppose $\Lp$ is lower semimodular and graded with $\rank(\G) = \sum_{i=1}^k |\sE^i|$ for each $\G = (\sV, \sE^1, \dots, \sE^k)$. Then, we have $d_{\Lp}(\G_s,\G_t)=\sum_{i=1}^k |\sE^i_s \triangle \sE^i_t|$. 
Further, when $k=1$, $d_{\Lp}$ reduces to $\SHD_1$ if $\g$ is a class of undirected graphs, and to $\SHD_2$ if $\g$ is a class of directed graphs.
\end{lemma}

Effectively, this shows that the model-oriented distance is an edge-by-edge distance whenever the partial ordering can be determined by examining one edge at a time 
(subgraph containment) and the Hasse diagram is sufficiently dense (lower semimodular). 
This typically holds for graph classes that impose few constraints, such as DAGs and ADMGs, 
but not for CPDAGs and MPDAGs. In \cref{sec:A-star} we show that the model-oriented distance differs markedly from SHDs in CPDAGs and MPDAGs. 

\section{Model-oriented distance for probabilistic and causal graphs} \label{sec:m_distance_Graphs}
In this section, we apply the general framework to UGs, DAGs, ADMGs, CPDAGs, MPDAGs as well as simpler subclasses of CPDAGs and MPDAGs. 
\cref{tab:summary} summarizes the structural properties of the poset underlying each graph class, with UGs being the most structured and MPDAGs being the least structured. 
It is worth noting that the lower (meet) and upper (join) properties need not be symmetric. This is due to the fact that, starting from an arbitrary graph, we can more commonly obtain a graph in the same class by removing edges than adding edges.

\begin{table}
\centering
\caption{Structural properties of the model-oriented poset for different graph classes}
\label{tab:summary}
\scalebox{.95}{%
\begin{tabular}{rcccccc} 
\toprule
\textbf{Graph class} & \textbf{Model} & \textbf{Least element} & \textbf{Greatest element} & \textbf{Graded} & \textbf{Semi-lattice} & \textbf{Semimodular} \\
\midrule
UG & prob. & empty graph & full graph& \checkmark & join + meet &  \text{lower + upper}\\
\hline
DAG & causal & empty graph & {\crossmark}& \checkmark & meet  & lower\\
\hline
ADMG & causal & empty graph & {\crossmark}&\checkmark & meet  & lower\\
\hline
CPDAG & prob. & empty graph & full graph&\checkmark & {\crossmark} & \crossmark \\
Polytree CPDAG & prob. & empty graph & {\crossmark}&\checkmark & {\crossmark} & lower \\
\hline
MPDAG & causal & empty graph & full graph&{\crossmark} & {\crossmark} &{\crossmark}\\
Polytree MPDAG & causal & empty graph &{\crossmark} &\checkmark & {\crossmark}& lower \\
\bottomrule
\end{tabular}}
\end{table}

\subsection{Probabilistic UGs} \label{sec:ugs}
As a warm-up, consider $\g$ the class of UGs over a finite vertex set $\sV$ representing probabilistic models given by $\Mug$ in \cref{eq:m-ug}. The next result states that model inclusion becomes graph inclusion.

\begin{proposition}[UG comparability] \label{prop:order-ug}
For UGs $\G_1,\G_2 \in \mathfrak{G}$, we have $\Mug(\G_1) \subseteq \Mug(\G_2)$ if and only if $\G_1$ is a subgraph of $\G_2$.
\end{proposition}
Because graph containment in $\g$ is isomorphic to set containment in the power set of $\{(i,j) \in \sV \times \sV: i<j\}$, we have a model-oriented poset $\Lp$ that is highly structured. In fact, $\Lp$ is known as the \emph{Boolean lattice}, which gives the next result. \cref{fig:hasse}(c) shows its Hasse diagram when $|\sV| = 3$. 

\begin{theorem}[UG poset] \label{prop:poset-ugs}
Let $\Lp$ be the model-oriented poset associated with $(\g, \Mug)$, where $\g$ is the class of UGs over a finite vertex set $\sV$. We have the following results. 
\begin{enumerate}
\item $\Glst$ is the empty graph; $\Ggr$ is the full graph.
\item $\Lp$ is graded with $\rank(\cdot)$ equal to the number of edges. 
\item We have $\G_1 \coverby \G_2$ if and only if $\G_1$ is a subgraph of $\G_2$ with exactly one fewer edge. 
\item $\Lp$ is a lattice (i.e., both a meet and a join semi-lattice).
\item $\Lp$ is both lower and upper semimodular.
\end{enumerate}
\end{theorem}

From \cref{prop:poset-ugs,corr:main}, we obtain a closed form for the model-oriented distance.

\begin{corollary} \label{cor:d-ugs}
For UGs $\G_1 = (\sV, \sE_1)$ and $\G_2 = (\sV, \sE_2)$, we have $d_{\Lp}(\G_1, \G_2) = |\sE_1 \triangle \sE_2| = \SHD_1(\G_1, \G_2)$.
\end{corollary}

\subsection{Causal DAGs} \label{sec:dags}
Let $\g$ be the class of DAGs over a finite vertex set. 

\begin{proposition}[DAG comparability] \label{prop:order-dags}
For DAGs $\D_1,\D_2 \in \mathfrak{G}$, we have $\Mdag(\D_1) \subseteq \Mdag(\D_2)$ if and only if $\D_1$ is a subgraph of $\D_2$.
\end{proposition}
Using this result, the model-oriented poset is characterized by the next result. \cref{fig:hasse}(b) shows the Hasse diagram of this poset when the vertex set is of size 3. 

\begin{theorem}[DAG poset] \label{prop:poset-dags}
Let $\Lp$ be the model-oriented poset associated with $(\g, \Mdag)$, where $\g$ is the class of DAGs over a finite vertex set $\sV$. We have the following results. 
\begin{enumerate}
\item $\D_{\hat{0}}$ is the empty graph. There is no $\D_{\hat{1}}$ when $|\sV|>1$. 
\item $\Lp$ is graded with $\rank(\cdot)$ equal to the number of edges.
\item We have $\D_1 \coverby \D_2$ if and only if $\D_1$ is a subgraph of $\D_2$ with exactly one fewer edge.
\item $\Lp$ is a meet semi-lattice.%; when $|\sV| > 1$, $\Lp$ is not a join semi-lattice.
\item $\Lp$ is lower semimodular.%; when $|\sV| > 1$, $\Lp$ is not upper semimodular.
\end{enumerate}
\end{theorem}

When $|\sV| > 1$, $\Lp$ is neither a join semi-lattice nor upper semimodular. By \cref{prop:poset-dags,corr:main}, we obtain the following closed form characterization of the model-oriented distance between DAGs.

% that the model-oriented distance between DAGs coincides with a certain version of SHD. 

\begin{corollary} \label{cor:d-dags}
The model-oriented distance between two DAGs is given by 
\[ d_{\Lp}(\D_1, \D_2) = |\sE_1 \triangle \sE_2| = \SHD_2(\D_1, \D_2), \quad \D_1 = (\sV, \sE_1), \,\D_2 = (\sV, \sE_2) \in \g. \]
Further, $d_{\Lp}(\D_1, \D_2) \geq \SHD_1(\D_1, \D_2)$ holds; whenever $\D_1$ and $\D_2$ have at least one edge in opposite directions, we have $d_{\Lp}(\D_1, \D_2) > \SHD_1(\D_1, \D_2)$.
\end{corollary}
This result is valuable for two reasons. First, the closed-form expression is cheap to compute as it coincides with a specific SHD variant. Second, it provides evidence that $\SHD_2$ is more appropriate for DAGs than $\SHD_1$.
%highlights that our distance reduces to a variant of the structural Hamming distance. This result provides justification for which variant is most appropriate,
%is interesting for multiple reasons. First, it says that the model-oriented distance can be computed efficiently, 

% \begin{proof}
% It follows from \cref{prop:poset-dags} and \cref{corr:main}. 
% \end{proof}

% ==================
\subsection{Causal ADMGs} \label{sec:admgs}
Let $\g$ be the class of ADMGs over a finite vertex set $\sV$. 

\begin{proposition} [ADMG comparability] \label{prop:order-admgs}
For ADMGs $\G_1, \G_2 \in \g$, we have $\Madmg(\G_1) \subseteq \Madmg(\G_2)$ if and only if $\G_1$ is a subgraph of $\G_2$. 
\end{proposition}

%We leave its proof to \cref{sec:proof_ordering_mpdags}. 
Let $\Lp$ be the model-oriented poset associated with $(\g, \Madmg)$. 
For an ADMG $\G = (\sV, \sE^{\leftrightarrow}, \sE^{\to})$, the bidirected edges $\sE^{\leftrightarrow}$ and the directed edges $\sE^{\to}$ are \emph{variation independent} in the sense that any bidirected graph $\B:=(\sV, \sE^{\leftrightarrow})$ and any DAG $\D:=(\sV, \sE^{\to})$ lead to a valid ADMG. 
Then, in light of \cref{prop:order-admgs}, we see that $\Lp$ is a \emph{Cartesian product} of posets $\Lp^{\leftrightarrow}$ and $\Lp^{\to}$ with $\G_1 \preceq \G_2$ if and only if $\B_1 \preceq \B_2$ and $\D_1 \preceq \D_2$; see, e.g., \citet[\S3.2]{stanley2011enumerative}. 
The poset $\Lp^{\leftrightarrow}$ with elements $\B$ is the Boolean lattice over unordered vertex pairs, which is isomorphic to the UG poset. 
The poset $\Lp^{\to}$ with elements $\D$ is the DAG poset. 
The following properties can then be deduced from \cref{prop:poset-ugs,prop:poset-dags}.

\begin{corollary}[ADMG poset] \label{cor:poset-admgs} 
The following results hold:
\begin{enumerate}
\item $\G_{\hat{0}}$ is the empty graph. There is no $\G_{\hat{1}}$ when $|\sV|>1$. 
\item $\Lp$ is graded with $\rank(\cdot)$ equal to the total number of edges.
\item We have $\G_1 \coverby \G_2$ if and only if $\G_1$ is a subgraph of $\G_2$ with exactly one fewer edge.
\item $\Lp$ is a meet semi-lattice.
\item $\Lp$ is lower semimodular.
\end{enumerate}
\end{corollary}

By \cref{cor:poset-admgs,corr:main}, we obtain our distance between ADMGs in a closed form.
\begin{corollary} \label{cor:d-admgs}
The model-oriented distance between two ADMGs is given by 
\[ d_{\Lp}(\G_1, \G_2) = |\sE_1^{\leftrightarrow} \triangle\,\sE_2^{\leftrightarrow}| + |\sE_1^{\to} \triangle\, \sE_2^{\to}|, \quad  \G_1 = (\sV, \sE_1^{\leftrightarrow},\sE_1^{\to}), \;\G_2 = (\sV, \sE_2^{\leftrightarrow},\sE_2^{\to}) \in \g. \]
\end{corollary}

% ==================
\subsection{Probabilistic CPDAGs} \label{sec:cpdags}

Let $\g$ be the class of CPDAGs over a finite vertex set $\sV$. 
% which represent probabilistic models in the sense of $\Mcpdag$ given by \cref{eq:m-cpdag}. 
For $\G \in \g$, we use $[\G]$ to denote the set of Markov equivalent DAGs it represents. 
Here, model containment is characterized by a basic result due to \citet{chickering2002optimal}, restated as follows. %\at{Is every part of this a chickering result? I thought only the third part?}
\begin{lemma}[CPDAG comparability]\label{prop:order-cpdags}
For CPDAGs $\G_s,\G_t \in \mathfrak{G}$, the following are equivalent: 
\begin{enumerate}
\item $\Mcpdag(\G_s) \subseteq \Mcpdag(\G_t)$.
\item There exist DAGs $\D_s \in [\G_s]$ and $\D_t \in [\G_t]$ such that the set of d-separations in $\D_t$ are also present in $\D_s$. 
\item There exist DAGs $\D_s \in [\G_s]$ and $\D_t \in [\G_t]$ such that there is a sequence of DAGs $\D_s =: \D_0, \D_1, \dots, \D_k := \D_t$ $(k \geq 0)$, where each $\D_i$ is obtained from $\D_{i-1}$ by one of the two operations: (i) reversing a covered edge, or (ii) adding an edge. For operation (i), $\D_{i-1}$ and $\D_{i}$ are Markov equivalent; for (ii), the d-separations in $\D_{i-1}$ contain those in $\D_i$. 
\end{enumerate}
\end{lemma}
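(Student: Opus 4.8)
The plan is to establish the three statements as two equivalences, exploiting the fact that all DAGs in a Markov equivalence class share the same set of d-separations, so that both (2) and (3) may be read as statements about the classes $[\G_s]$ and $[\G_t]$ rather than about particular representatives. Concretely, I would prove (1) $\Leftrightarrow$ (2) analytically, via the global Markov property together with faithfulness, and (2) $\Leftrightarrow$ (3) combinatorially, via covered-edge reversals and edge additions.

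For the link between (1) and (2), I would use the global Markov characterization in \cref{eq:m-cpdag}: $\Mcpdag(\G)$ is exactly the set of $P \in \Mo$ obeying every conditional independence dictated by a d-separation in any $\D \in [\G]$. The direction (2) $\Rightarrow$ (1) is then immediate, since if every d-separation of $\D_t$ is also a d-separation of $\D_s$, then any $P$ respecting all independences of $\D_s$ a fortiori respects those of $\D_t$, giving $\Mcpdag(\G_s) \subseteq \Mcpdag(\G_t)$. For (1) $\Rightarrow$ (2) I would invoke the existence of a distribution $P^\ast \in \Mo$ that is \emph{faithful} to $\D_s$, i.e.\ whose conditional independences are \emph{exactly} the d-separations of $\D_s$ (such distributions exist generically, e.g.\ among Gaussians, the set of unfaithful laws having measure zero). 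Since $P^\ast \in \Mcpdag(\G_s) \subseteq \Mcpdag(\G_t)$, it must obey every d-separation of $\D_t$; by faithfulness these independences coincide with the d-separations of $\D_s$, forcing the d-separations of $\D_t$ to be contained in those of $\D_s$.

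The direction (3) $\Rightarrow$ (2) is the easy combinatorial half. Reversing a covered edge leaves both the skeleton and the set of v-structures unchanged, so $\D_{i-1}$ and $\D_i$ are Markov equivalent and have identical d-separation sets; adding an edge can only destroy d-separations and never create new ones. Hence along the sequence $\D_s = \D_0, \dots, \D_k = \D_t$ the d-separation set is non-increasing, which yields that the d-separations of $\D_t$ are contained in those of $\D_s$.

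The main obstacle is the constructive direction (2) $\Rightarrow$ (3), which is the technical core of the result. Here I would argue by induction on the discrepancy between the two edge sets, using the freedom---guaranteed by Meek's theorem that covered-edge reversals connect any two members of an equivalence class---to choose convenient representatives $\D_s, \D_t$. At each step one must locate an adjacency present in $\D_s$ but absent in $\D_t$, or a mis-oriented edge, and then exhibit a covered edge whose reversal legally re-positions an endpoint so that a subsequent edge addition strictly reduces the discrepancy while keeping the running DAG an independence sub-model of $\D_s$. The delicate part---and where a careful graphical case analysis is unavoidable---is guaranteeing at every stage that such a covered edge exists and that its reversal introduces neither a directed cycle nor an unwanted v-structure; this is exactly the bookkeeping carried out in \citet{chickering2002optimal}, so I would either reproduce that construction or, since the statement is quoted here as a known theorem, cite it directly.
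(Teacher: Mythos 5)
The paper offers no proof of this statement at all: it is quoted verbatim as Theorem~4 of \citet{chickering2002optimal}, so the ``paper's approach'' is simply to cite. Your proposal is consistent with that and adds correct supporting detail: the equivalence of (1) and (2) via the global Markov property plus the existence of a distribution faithful to $\D_s$ is sound (noting that faithful distributions exist in the ground set $\Mo$ under the same kind of non-degeneracy assumption the paper uses elsewhere, cf.\ its Assumption~1), and your argument that (3) implies (2) --- covered-edge reversals preserve the d-separation set while edge additions only destroy d-separations --- is correct. For the one genuinely hard direction, (2) implies (3), you correctly identify it as the transformational characterization whose case analysis constitutes Chickering's proof, and deferring to that citation is exactly what the paper itself does, so there is no gap relative to the paper's treatment.
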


Here, a directed edge $u \rightarrow v$ is \emph{covered} in directed acyclic graph $\D$ if $\Pa_{\D}(v) = \Pa_{\D}(u) \cup \{u\}$. Using the result above, we can derive the properties of the model-oriented poset over the collection $\g_{\textsc{cpdag}}$ listed in \cref{prop:poset-cpdags}. \cref{fig:hasse}(a) shows the Hasse diagram of this poset when $|\sV|=3$.

%\begin{remark}
%In light of the second result in \cref{prop:order-cpdags}, it may be tempting to conjecture that for CPDAGs $\G_s$ and $\G_t$, we have $\G_s \preceq \G_t$ if and only if there exist DAGs $\D_s \in [\G_s], \D_t \in [\G_t]$ such that $\D_s$ is a subgraph of $\D_t$. 
%But the `only if' part of this is not true, as illustrated by the following counterexample:

%\[
%\begin{tikzpicture}
%\tikzset{node distance=8mm}
%\begin{scope}
%\node[name=v1, rv]{$v_1$};
%\node[name=v2, rv, right of=v1]{$v_2$};
%\node[name=v3, rv, below of=v2]{$v_3$};
%\node[name=v4, rv, left of=v3]{$v_4$};
%\node[name=l, left of=v1, yshift=-3mm]{$\G_s$: };
%\draw[->, thick, color=teal] (v1) to (v3);
%\draw[->, thick, color=teal] (v2) to (v3);
%\end{scope}
%\end{tikzpicture}
%\qquad 
%\begin{tikzpicture}
%\tikzset{node distance=8mm}
%\begin{scope}[xshift=3cm]
%\node[name=v1, rv]{$v_1$};
%\node[name=v2, rv, right of=v1]{$v_2$};
%\node[name=v3, rv, below of=v2]{$v_3$};
%\node[name=v4, rv, left of=v3]{$v_4$};
%\node[name=l, left of=v1, yshift=-3mm]{$\G_s$: };
%\draw[->, thick, color=teal] (v1) to (v2);
%\draw[->, thick, color=teal] (v3) to (v1);
%\draw[->, thick, color=teal] (v3) to (v2);
%\draw[->, thick, color=teal] (v4) to (v1);
%\end{scope}
%\end{tikzpicture}\]
%\end{remark}

\citet{Taeb2023ModelSO} highlighted some properties of the CPDAG poset, which we restate as follows.
\begin{theorem}[CPDAG poset] \label{prop:poset-cpdags}
Let $\Lp$ be the model-oriented poset associated with $(\g, \Mcpdag)$, where $\g$ is the class of CPDAGs over a finite vertex set $\sV$. We have the following results. 
\begin{enumerate}
\item $\Glst$ is the empty graph; $\Ggr$ is the full graph.
\item For $\G_1,\G_2 \in \g$, we have $\G_1 \coverby \G_2$ if and only if there exist DAGs $\D_1 \in [\G_1]$, $\D_2 \in [\G_2]$ such that $\D_1$ is a subgraph of $\D_2$ with exactly one fewer edge.
\item $\Lp$ is graded with $\rank(\cdot)$ equal to the number of edges. 
\end{enumerate}
\end{theorem}
Further, $\Lp$ is neither a join or meet semi-lattice nor upper or lower semimodular; see \cref{app:cpdags}. 
% \subsubsection{Upper and lower bounds on $d_{\Lp}$}
As a result, it is more challenging to compute our model-oriented distance here than was the case for UGs, DAGs or ADMGs. First, \cref{corr:main} no longer applies and indeed, as we demonstrate in \cref{ex:SHD}, SHD behaves very differently from our distance. Further, by \cref{thm:semimod}, because the poset is not semimodular, up-down and down-up distances can only provide upper bounds on the model-oriented distance. 
Indeed, there are cases where the shortest path between two CPDAGs is zigzag --- neither down-up nor up-down; see \cref{app:cpdag_ex} for an example.

Due to these difficulties, we propose using the A* algorithm to compute $d_{\Lp}$ (see \cref{sec:A-star}). As an informed search algorithm, A* relies on good, custom-made lower and upper bounds on the distance to speed up the search: the lower bound, often called the \emph{heuristic function} in the literature, allows one to disregard some paths without discarding a potentially optimal path; the upper bound enables early termination and further pruning of suboptimal paths. 
In \cref{app:upper_lower}, we derive upper and lower bounds on $d_{\Lp}$ between CPDAGs. 

\subsubsection{Restriction to polytrees} \label{sec:polytree-cpdags}
We have seen that the model-oriented distance for probabilistic CPDAGs does not admit a simple, closed-form expression in general. 
% as it does for causal DAGs (\cref{cor:d-dags}) and causal ADMGs (\cref{cor:d-admgs}). This reflects the complexity of the poset underlying CPDAGs.
% We will use the A* algorithm to compute the distance in \cref{sec:A-star}. 
We now show that the distance does, however, take a simpler form if we restrict the graph class to \emph{polytrees}, that is, CPDAGs whose skeletons do not contain cycles. 
Due to their relative tractability, polytrees have been studied extensively in the causal discovery literature; see, e.g., \citet{Rebane1987TheRO,Sepehr2019AnAT,Grttemeier2021OnTP,Tramontano,Amendola2021ThirdorderMV}.

Formally, let $\g_{\poly}$ be the class of CPDAGs over a vertex set $\sV$ that are polytrees and let $\Lp_{\poly}$ be the model-oriented poset associated with $(\g_{\poly}, \Mcpdag)$. 
The poset $\Lp_{\poly}$ is called the \emph{subposet} of $\Lp$ \emph{induced} by $\g_{\poly}$. 
This subposet has the following properties. 

\begin{theorem}[Polytree CPDAG poset] \label{thm:poset-polytree-cpdags}
Let $\Lp_{\poly}$ be the model-oriented poset associated with $(\g_{\poly}, \Mcpdag)$, where $\g_{\poly}$ is the class of polytree CPDAGs over a finite vertex set $\sV$. The following results hold.
\begin{enumerate}
\item $\Glst$ is the empty graph. There is no $\Ggr$ when $|\sV|>2$.
\item For $\G_1,\G_2 \in \g_{\poly}$, we have $\G_1 \preceq \G_2$ if and only if there exist DAGs $\D_1 \in [\G_1]$, $\D_2 \in [\G_2]$ such that $\D_1$ is a subgraph of $\D_2$.
\item $\Lp_{\poly}$ is graded with $\rank(\cdot)$ equal to the number of edges. 
\item $\Lp_{\poly}$ is lower semimodular. 
\item $d_{\Lp_{\poly}}(\G_s,\G_t) = d_{\Lp_{\poly},\du}(\G_s,\G_t)$ holds for every $\G_s,\G_t \in \g_{\poly}$.
\end{enumerate}
\end{theorem}
Importantly, the last result in this theorem says that we can restrict to down–up paths when finding the model-oriented distance, leading to substantial computational savings.
% We conclude this subsection on CPDAGs with the following remark. 

% \begin{remark}[Separation distance]
% \at{Needs to be updated}
% It is worth mentioning a different `model-oriented' metric on CPDAGs: \citet{wahl2025metric} define the \emph{separation distance} between graphs $\G_s$ and $\G_t$ as $|\sCI(\G_s) \triangle \sCI(\G_t)|$, where $\sCI$ is the set of conditional independence statements. i.e., triplets $(\sA,\sB,\sC)$ representing $\sA \indep_{d} \sB \mid \sC$, encoded by a graph. 
% In addition to the obvious difficulty of enumerating the CI statements, we argue that their distance is less `model-oriented' than our definition because it ignores the structure underlying those sets of CIs that \emph{correspond to} graphs. 
% In fact, determining which sets of triples correspond to a valid conditional independence model is a challenging problem \citep{sadeghi2017faithfulness}.
% Ignoring this issue leads to undesirable idiosyncrasies, which we discuss in \cref{app:sep-distance}.
% \end{remark}

% ==================
\subsection{Causal MPDAGs} \label{sec:mpdags}
Let $\g$ be the set of MPDAGs over a finite vertex set $\sV$.
% equipped with the model map $\Mmpdag$. 
Recall from \cref{sec:background} that an MPDAG $\G$ represents a subset of Markov equivalent (but causally distinct) DAGs that satisfy certain background knowledge; we use $[\G]$ to denote the collection of DAGs that $\G$ represents. 
The partial order in this context is characterized graphically by the following result; compare with \cref{prop:order-cpdags}.

\begin{proposition}[MPDAG comparability] \label{prop:order-mpdags}
Consider two MPDAGs $\G_s$ and $\G_t$. 
We have $\Mmpdag(\G_s) \subseteq \Mmpdag(\G_t)$ if and only if for every DAG $\mathcal{D}_s \in [\G_s]$, there exists a DAG $\mathcal{D}_t \in [\G_t]$ such that $\mathcal{D}_s$ is a subgraph of $\mathcal{D}_t$. 
\end{proposition}

% \cref{prop:order-mpdags} is central to studying the model-oriented poset $\Lp$ of MPDAGs. 
\cref{fig:hasse}(d) shows the corresponding Hasse diagram when $|\sV|= 3$ and 
\cref{tab:summary} summarizes its structural properties. In particular, the least element $\Glst$ is the empty graph and the greatest element $\Ggr$ is the fully connected undirected graph. However, unlike UGs, DAGs and CPDAGs, the MPDAG poset does not admit the total number of edges as the rank function, as illustrated by the next example. 

\begin{example} \label{ex:mpdag}
Consider the MPDAGs in \cref{fig:mpdag_illus} over $n$ vertices. 
One can use \cref{prop:pseudo-covering} to show $\G_1 \coverby \G_2$.
It also holds that $\G_1 \coverby\G_3$. 
To see this, first observe that $\G_1 \prec \G_3$ by \cref{prop:order-mpdags}; then, note that there cannot be a graph $\G$ that satisfies $\G_1 \prec \G \prec \G_3$ because $\G$ must result from removing one or more directed edge from $\G_3$, which would introduce a forbidden structure to the MPDAG (see \cref{fig:dags}).
Hence, despite both being neighbors of $\G_1$, the graph $\G_2$ has one more edge than $\G_1$, while the graph $\G_3$ has $n-1$ more edges than $\G_1$. 
\end{example}

\begin{figure}[htbp]
  \subfloat[$\G_1$]{\includegraphics[width=.25\columnwidth, angle = 0]{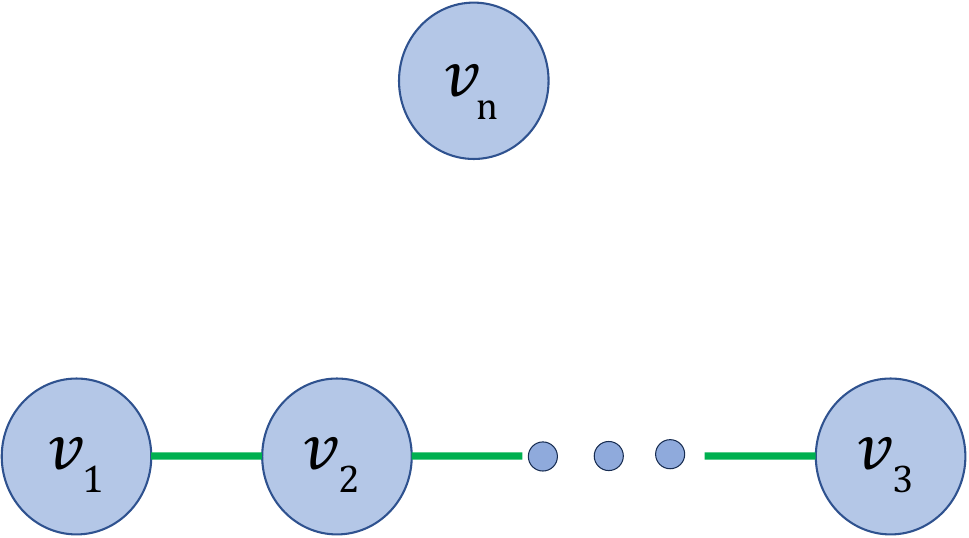}} \hspace{0.5in}\  \subfloat[$\G_2$: a neighbor of $\G_1$]{\includegraphics[width=.25\columnwidth, angle = 0]{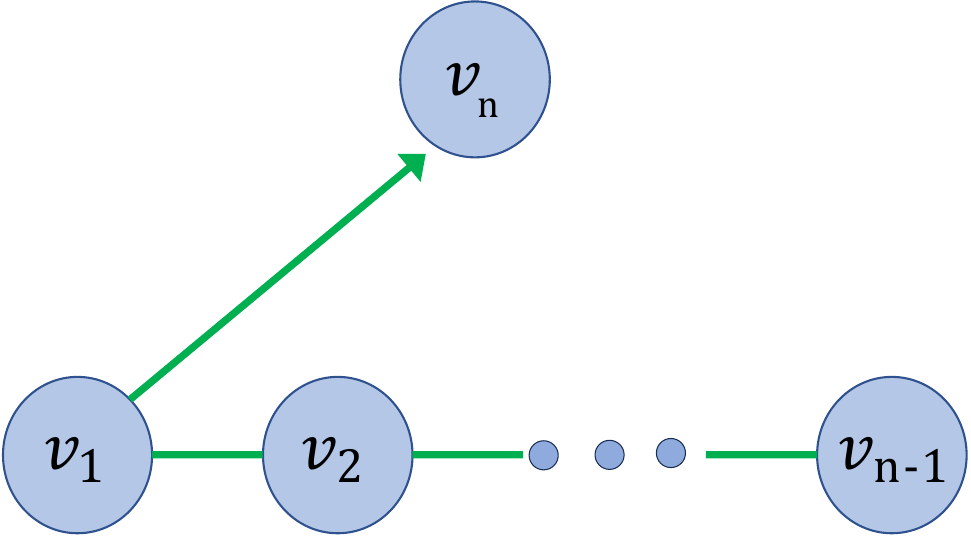}} \hspace{0.5in} 
        \subfloat[$\G_3$: a neighbor of $\G_1$]{\includegraphics[width=.25\columnwidth, angle = 0]{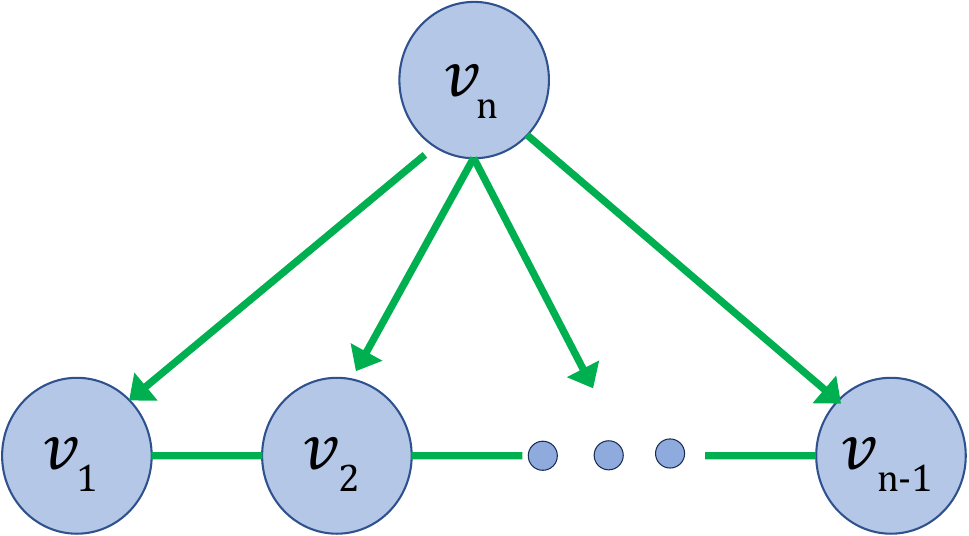}}\hspace{0.2in}  
            \caption{MPDAGs $\G_1, \G_2, \G_3$ considered in \cref{ex:mpdag}.}
\label{fig:mpdag_illus}
\end{figure}

The following result establishes that the poset does in fact not admit any rank function.

\begin{proposition}[MPDAG non-gradedness] \label{prop:not_graded_mpdags}
Let $\g$ be the class of MPDAGs over a finite vertex set $\sV$ with {$|\sV| \geq 4$}. The model-oriented poset of $(\g, \Mmpdag)$ is not graded. 
\end{proposition}

In addition to $\Lp$ not being graded, we do not have an explicit graphical characterization of the neighborhood (i.e., the covering relation); indeed, as we have seen from \cref{ex:mpdag}, such a characterization is likely challenging as the structures of neighboring graphs may vary greatly. 
Hence, it is difficult to enumerate neighbors; an essential component of any search algorithm. Moreover, the down-up and up-down distances are less useful in the context of MPDAGs. First, $\Lp$ is neither upper nor lower semimodular (see \cref{app:mpdags}), so $d_{\Lp,\du}$ and  $d_{\Lp,\ud}$ are merely upper bounds of $d_{\Lp}$ (see \cref{thm:semimod}). Second, the lack of gradedness and semi-lattice structures (see \cref{app:mpdags}) prevents us from reducing these distances to a more computable form (see \cref{prop:down_up_up_down_graded,cor:du-semilat}). 

 To remedy these challenges, we introduce the following notion of \emph{pseudo-rank}.

\begin{definition}[MPDAG pseudo-rank] The pseudo-rank of an MPDAG $\G$ is defined as
\[ \pseudorank(\G):= \text{number of directed edges} +  \text{number of undirected edges} \times 2 . \]
\end{definition}
This pseudo-rank serves as a surrogate for the rank in a non-graded poset and reflects an intuitive measure of structural complexity, as shown by the next two results. \cref{prop:pseudo-covering}, which is proved in \cref{proof:mpdag_pseudo}, may be of independent interest.

\begin{proposition}[Pseudo-rank and covering for MPDAGs] \label{prop:pseudo-covering}
For MPDAGs $\G_1,\G_2$ {such that $\G_1 \preceq \G_2$, we have $\pseudorank(\G_1) \leq \pseudorank(\G_2)$, where the equality holds if and only if $\G_1 = \G_2$.} 
Additionally, if $\pseudorank(\G_1)+1=\pseudorank(\G_2)$, we have $\G_1 \coverby \G_2$. 
\end{proposition}

\begin{proposition}[MPDAG model complexity]\label{prop:mpdag_upper}
Between any MPDAG $\G$ and the empty graph $\Glst$, there exists a maximal chain (i.e., of the form $\Glst=:\G_0 \coverby \cdots \coverby \G_l:=\G$) of length $\pseudorank(\G)$. 
\end{proposition}

Based on \cref{prop:mpdag_upper}, we define $\sP_{\Lp,\pseudo}(\G_s,\G_t)$ as the subset of paths between $\G_s$ and $\G_t$ such that every consecutive pair of graphs on the path differ by unit pseudo-rank, namely
\begin{multline*}
\sP_{\Lp,\pseudo}(\G_s,\G_t):=\big\{(\G_s=:\G_0, \dots, \G_d:=\G_t) \in \sP_{\Lp}(\G_s, \G_t):\; \text{for $i=1,\dots,d$, }\\ |\pseudorank(\G_i) - \pseudorank(\G_{i-1})|=1 \big\}. 
\end{multline*}
In other words, $\sP_{\Lp,\pseudo}(\G_s,\G_t)$ only consists of paths through \emph{restricted neighborhoods}, i.e., those neighboring graphs that differ by exactly one edge between a certain pair of vertices $a,b$: the difference is either $a ~~~ b$ (no edge) versus $a \rightarrow b$, or $a \rightarrow b$ versus $a-b$.
We then define the distance
\begin{equation} \label{eqs:d-pseudo}
d_{\Lp,\pseudo}(\G_s,\G_t):= \min\{\mathrm{length}(p): p \in \sP_{\Lp,\pseudo}(\G_s,\G_t)\}, \quad \G_s, \G_t \in \g.
\end{equation}
Note that for any $\G_s, \G_t$, the set $\sP_{\Lp,\pseudo}(\G_s,\G_t)$ is non-empty because it contains the path that concatenates two maximal chains: first from $\G_s$ to $\Glst$, then from $\Glst$ to $\G_t$. Thus, by \cref{prop:mpdag_upper}, we conclude the following result.
\begin{corollary} \label{cor:mpdag_upper}
The distance $d_{\Lp,\pseudo}$ is a metric. Further, it holds that $d_{\Lp}(\G_s,\G_t) \leq d_{\Lp,\pseudo}(\G_s,\G_t)$ for every $\G_s, \G_t$.
\end{corollary}
\iffalse
\begin{proof}
By \cref{prop:mpdag_upper}, given any MPDAGs $\G_1, \G_2$, the set $\sP_{\Lp,\pseudo}(\G_1,\G_2)$ is non-empty because it contains the path that concatenates two maximal chains: first from $\G_1$ to $\Glst$, then from $\Glst$ to $\G_2$. This allows us to conclude both results.  
%The result then follows by definition. 
\end{proof}
\fi
Due to the ease of enumerating graphs in the restricted neighborhood, $d_{\Lp,\pseudo}$ is easier to compute than $d_{\Lp}$, $d_{\Lp,\du}$ or $d_{\Lp,\ud}$. We also compare it with SHD in the next result. 

\begin{proposition}[MPDAG distance to SHD relationship] \label{prop:shd_pseudo} 
For any MPDAGs $\G_1$ and $\G_2$, it holds that $\SHD_1(\G_1,\G_2) \leq \SHD_2(\G_1,\G_2) \leq d_{\Lp,\pseudo}(\G_1,\G_2)$.
\end{proposition}

See also \cref{fig:mpdag-numerical} in the Appendix that compares these distances computed for polytree MPDAGs over 5 vertices. 
In \cref{sec:A-star}, we will use $\SHD_2$ as a lower bound in the A* algorithm for computing $d_{\Lp, \pseudo}$. 
Note that neither of the two standard SHDs lower bound the original $d_{\Lp}$. In the example of \cref{fig:mpdag_illus}, we have $d_{\Lp, \pseudo}(\G_1, \G_3) = 1$ but $\SHD_1(\G_1, \G_3) = \SHD_2(\G_1, \G_3) = n-1$.

\subsubsection{Restriction to polytrees} \label{sec:polytree-mpdags}
We consider again restricting the class of graphs to $\mathfrak{G}_{\poly}$, which in this context denotes all MPDAGs that are polytrees over a vertex set $\sV$. 
This induces the subposet $\Lp_{\poly}=(\mathfrak{G}_{\poly},\preceq)$ and further the corresponding shortest-path distance $d_{\Lp_\poly}$ between every pair of graphs in $\mathfrak{G}_{\poly}$. 
Furthermore, let $d_{\Lp_{\poly},\pseudo}$ be the analogue of $d_{\Lp,\pseudo}$ in \cref{eqs:d-pseudo} but defined on the subposet $\Lp_{\poly}$ instead of $\Lp$. 
By an argument similar to the proof of \cref{cor:mpdag_upper} (there still exists a path by concatenating two maximal chains of polytrees), we have $d_{\Lp_\poly}(\G_s,\G_t) \leq d_{\Lp_{\poly},\pseudo}(\G_s,\G_t)$ for every polytree $\G_s, \G_t$.
In fact, the result below shows that this inequality is an equality. 

\begin{theorem}[Polytree MPDAG poset] \label{thm:poset-polytree-mpdags}
Let $\Lp_{\poly}$ be the model-oriented poset associated with $(\g_{\poly}, \Mmpdag)$, where $\g_{\poly}$ is the class of polytree MPDAGs over a finite vertex set $\sV$. We have the following results.
\begin{enumerate}
\item $\Glst$ is the empty graph. There is no $\Ggr$ when $|\sV|>2$.
\item $\Lp_{\poly}$ is graded with rank function equal to $\pseudorank(\cdot)$.
\item $\Lp_{\poly}$ is lower semimodular.
\item For all $\G_s,\G_t \in \g_{\poly}$, we have $d_{\Lp_{\poly},\du}(\G_s,\G_t) = d_{\Lp_{\poly}}(\G_s,\G_t) = d_{\Lp_{\poly},\pseudo}(\G_s,\G_t)$.
\item For all $\G_s,\G_t \in \g_{\poly}$, we have $\SHD_1(\G_s,\G_t) \leq \SHD_2(\G_s,\G_t) \leq d_{\Lp_{\poly},\pseudo}(\G_s,\G_t)$.
\end{enumerate}
\end{theorem}

\section{Algorithms for computing model-oriented distances} \label{sec:A-star}
%In this section, we develop algorithms for computing the model-based distance. We will first present a generic A* algorithm suited to any graph class. Then, we will specialize the algorithm to probabilistic CPDAGs and causal, polytree MPDAGs in a manner that exploits their respective poset properties to save computation. Numerical results will be presented to illustrate computation time and to provide a comparison with the structural Hamming distance. 

\subsection{Generic A* algorithm with branch and bound} \label{sec:general_procedure}
A* \citep{hart1968formal} is a widely-used search algorithm to find the shortest path between two vertices on a graph. 
By employing a \emph{heuristic function} to guide the search, it is typically more efficient than naive breadth-first or depth-first search. 
Suppose we want to find the distance between $\G_s$ (source) and $\G_t$ (target). 
Starting from $\G_s$, the algorithm maintains a priority queue (known as the \emph{open set}) for the graphs to be visited, sorted by the index $f(\G) := g(\G) + h(\G)$ for each graph $\G$ in the queue. 
Here, $g(\G)$ is the distance from $\G_s$ to $\G$ and $h(\G)$ is the \emph{heuristic} that estimates from below the distance from $\G$ to $\G_t$. 
For each iteration, a graph with the smallest index is popped from the queue and its neighbors are pushed to the queue, upon which their $f$ and $g$ values are updated if smaller values are found. 
The algorithm terminates when $\G_t$ is popped from the queue and returns $g(\G_t)$ as the distance between $\G_s$ and $\G_t$. 
When $\G_s$ and $\G_t$ are connected, A* is guaranteed to terminate. 
It can be shown that A* outputs the correct distance if the heuristic is \emph{admissible} in the sense that $0 \leq h(\G) \leq d_{\Lp}(\G,\G_t)$ for every $\G \in \mathfrak{G}$.
Depending on how the queue is implemented, A* reduces to breadth-first or depth-first search if a trivial heuristic $h \equiv 0$ is supplied. By using an admissible heuristic with smaller $d_{\Lp}(\G,\G_t) - h(\G)$, A* becomes more efficient. 

To further save computation with branch and bound, we also employ an upper bound $u(\G)$ on the distance from $\G$ to $\G_t$ ($u \equiv +\infty$ is a trivial upper bound). 
By using $u(\cdot)$ to maintain $u^{\ast}$, the smallest upper bound on the distance between $\G_s$ and $\G_t$ witnessed so far, the algorithm can terminate early whenever $f(\G) \geq u^{\ast}$ due to the relation $f(\G) \leq d_{\Lp}(\G_s,\G_t) \leq u^{\ast}$. Moreover, when expanding the neighborhood of $\G$, those graphs with $f$ no smaller than $u^{\ast}$ can be pruned immediately. Naturally, one can use $d_{\Lp,\ud}$, $d_{\Lp,\du}$ or their minimum as the upper bound. We present this generic A* in \cref{alg:general_astar}, which requires three subroutines: lower bound $h(\cdot)$, upper bound $u(\cdot)$ and $\textsc{EnumNeighbors}()$ that enumerates the neighborhood of a graph in the model-oriented poset. 
\begin{theorem}[Algorithm correctness] \label{thm:alg}
Suppose $\g$ is a finite graph class and the associated model-oriented poset $\Lp=(\g, \preceq)$ is connected. For \cref{alg:general_astar}, suppose the subroutines $\textsc{EnumNeighbors}(\cdot)$, $u(\cdot)$ and $h(\cdot)$ satisfy the conditions specified at the beginning of the algorithm. Then, the algorithm will terminate and return $d_{\Lp}(\G_s, \G_t)$. 
\end{theorem}

\begin{algorithm}[!tb]
\DontPrintSemicolon
\SetKwFunction{Dict}{Dict}
\SetKwFunction{EnumNeighbors}{EnumNeighbors}
\SetKwFunction{PriorityQueue}{PriorityQueue}
\SetKwFunction{Pop}{Pop}
\SetKwFunction{Push}{Push}
\SetKwFunction{Keys}{Keys}
\SetKwInOut{Require}{Require}
\SetKwData{openSet}{openSet}
\SetKwData{fcurr}{f}
\SetKwData{gcurr}{g}
\SetKwData{ucurr}{u}
\SetKw{And}{and}
\SetKw{Or}{or}
\SetKw{Continue}{Continue}
\LinesNumbered

\KwIn{Graphs $\G_s,\G_t$ in a finite graph class $\g$ with model-oriented poset $\Lp=(\g, \preceq)$.}
\Require{Subroutine $\EnumNeighbors(\G)$ that outputs the neighbors of $\G$ in $\Lp$;\\
subroutine $h(\G)$ such that $0 \leq h(\G) \leq d_{\Lp}(\G,\G_t)$ for every $\G \in \g$; \\
subroutine $u(\G)$ such that $d_{\Lp}(\G,\G_t) \leq u(\G) \leq +\infty$ for every $\G \in \g$.}
\KwOut{Model-oriented distance $d_{\Lp}(\G_s,\G_t)$.}
\BlankLine
	$u^{\ast} \gets u(\G_s)$ \tcp*{Upper bound on $d_{\Lp}(\G_s,\G_t)$} 
    $g \gets \Dict[]$ \tcp*{Length of the shortest path from $\G_s$ found so far} 
    $g[\G_s] \gets 0$ \\
    $\openSet \gets \PriorityQueue\left((\G_s, 0), h(\G_s) \right)$ \tcp*{Item $(\G_s, g[\G_s])$ with index $g[\G_s]+h(\G_s)$}

    \While{$\openSet \neq \emptyset$}{
    $(\G,\gcurr), \fcurr \gets \Pop(\openSet)$  \tcp*{Pop an item with the smallest index}
    
    \If{$\fcurr \geq u^{\ast}$}{
    	\Return{$u^{\ast}$} \tcp*{Can terminate early because $\fcurr \leq d_{\Lp}(\G_s,\G_t) \leq u^{\ast}$}
    }
    
    \If{$\gcurr > g[\G]$}{ 
    	\Continue \tcp*{Skip a stale entry in the queue}
    }
    
    \If{$\G = \G_t$}{ 
    	\Return{\gcurr} \tcp*{Reached the target}
    }
    
\ForEach{$\G' \in \EnumNeighbors(\G)$}{
            $\gcurr' \gets \gcurr + 1$; \tcp*{Increment current distance by one}
            $\ucurr' \gets \gcurr' + u(\G')$ \\
            \If{$\ucurr' < u^{\ast}$}{
            $u^{\ast} \gets \ucurr'$  \tcp*{Tighten the upper bound} }
            \If{$\G' \notin \Keys(g)$ \Or $\gcurr' < g[\G']$}{
                $g[\G'] \gets \gcurr'$ \tcp*{Path through $\G'$ is created or shortened}
				$\fcurr' \gets \gcurr' + h(\G')$ %\at{Is $\fcurr'$ guaranteed to be a lower-bound? why?} %\Ric{No, because $g(\G)$ stores the length of the shortest path from $\G_s$ to $\G$ found so far and it can be bigger than $d_{\Lp}(\G_s,\G)$. But the point is that there must exist some graph in openSet whose $g$ equals its distance from $\G_s$ and hence its $f'$ is a genuine lower bound --- and this fact underlies the proof. See the updated proof of \cref{thm:alg}.}
				
				\If{$\fcurr' < u^{\ast}$}{
				$\Push(\openSet, (\G', \gcurr'), \fcurr')$ \tcp*{Push a potentially better solution}
				}
			}
		}
	}
\Return{$-1$}  \tcp*{Error: poset is not connected}
\caption{Generic A* algorithm with branch and bound for computing the model-oriented distance}
\label{alg:general_astar}
\end{algorithm}

\subsection{Numerical Results for CPDAGs} \label{sec:alg-cpdags}
We specialize our A* algorithm to CPDAGs; numerical experiments for MPDAGs can be found in \cref{app:MPDAG_numerical}. 
We describe the details of the algorithm including subroutines for enumerating neighbor, an admissible heuristic and an upper bound in \cref{sec:additional_algs}.

\begin{figure}[htbp]
\begin{center}
\includegraphics[width=.7\textwidth]{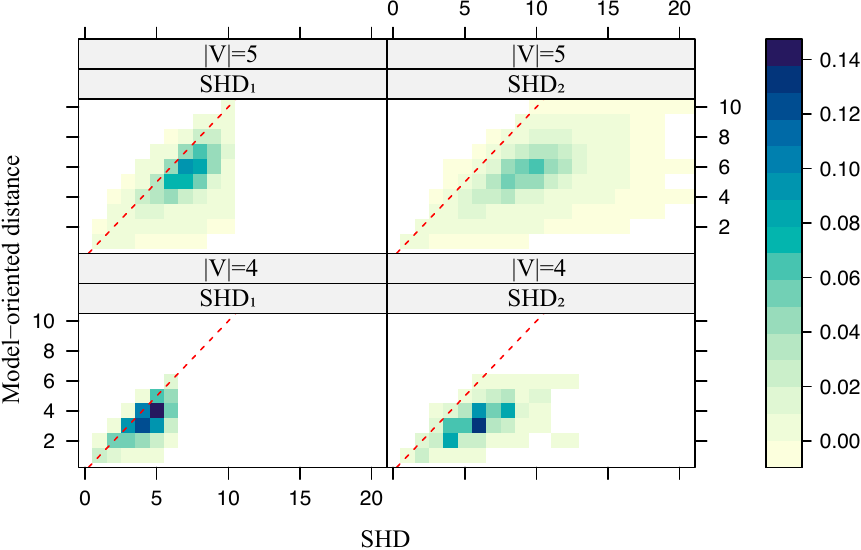}
\end{center}
\caption{Joint distribution of our distance and the structural Hamming distance computed over all pairs of CPDAGs over 4 or 5 vertices, where the identity line is drawn as dashed.}
\label{fig:cpdag_results}
\end{figure}

To demonstrate our algorithm, we report results from a small numerical experiment. 
There are 185 CPDAGs over 4 vertices and 8782 CPDAGs over 5 vertices \citep{gillispie2001enumerating}. 
We use our algorithm to compute the model-oriented distance between every pair of CPDAGs over 4 or 5 vertices. 
\cref{fig:cpdag_results} compares our distance with the structural Hamming distance: both $\SHD_1$ and $\SHD_2$ are typically larger than the model-oriented distance. 
In particular, \cref{fig:cpdag_results_4nodes} highlights some pairs of graphs with large discrepancy between our distance and the SHDs.  

We implement our algorithm in C++ and the results are obtained on an ARM64 machine with 4 CPU cores and 32 GB memory. 
The current version scales to a moderate number of vertices. 
For instance, for a pair of CPDAGs over 13 vertices randomly drawn from an Erd\"os--R\'enyi model with edge probability 0.2, the algorithm takes about 0.1 second on average to compute the distance.
It is worth mentioning that in this case $\g$ contains around $5 \times 10^{30}$ graphs \citep{oeis2025}, so the computation is only feasible by  algorithmically exploiting the structures of the poset. 
From our experiments, a significant amount of computation is saved by early termination upon attaining the upper bound (line 7 of \cref{alg:general_astar}). For CPDAGs over 4 or 5 vertices, around 96\% or 89\% of the pairs have $d_{\Lp} = \min(d_{\Lp,\ud}, d_{\Lp,\du})$.

\section{An application to covariate adjustment} \label{sec:robust-adj}
\cref{fig:robust} presents an application of our method to covariate adjustment in causal inference. 
Suppose we are interested in estimating the causal effect of $v_1$ on $v_2$ from observational data.
Suppose we specify a causal DAG $\G_0$ shown at the bottom of the Hasse diagram, where $v_3, v_4$ are the observed baseline covariates. 
According to the graph and the back-door criterion \citep{pearlBayesianAnalysisExpert1993}, the causal effect can be identified by adjusting for just $v_3$, or just $v_4$ or both $v_3$ and $v_4$ --- these three sets of covariates are the valid \emph{adjustment sets} under the specified graph. 
Yet, since the graph may be misspecified, it is of interest to determine an adjustment set that is most \emph{robust} to misspecifications, in particular directed or bidirected edges missing from the specification.  
The Hasse diagram depicts the model-oriented poset of $(\g, \Madmg)$, where $\g$ contains the specified graph and its supergraphs subject to the restriction that $v_3,v_4$ temporally precede $v_1$. 
For each graph in the diagram, the valid adjustment sets are color-coded.
We can see that the adjustment set $\{v_3,v_4\}$ is the most robust: it remains valid whenever the other sets do and is able to tolerate misspecifications as large as $d_{\Lp} = 5$. 
This sensitivity analysis depends on the fact that $d_{\Lp}$ is a metric so that the set of graphs $\{\G \in \g: d_{\Lp}(\G, \G_0) \leq d\}$ can be iteratively enlarged. 

%> sum.df
%       d    C1    C2  C1C2     N
%   <int> <int> <int> <int> <int>
%1:     0     1     1     1     1
%2:     1     5     4     7     8
%3:     2     8     6    19    28
%4:     3     5     4    23    56
%5:     4     1     1    12    70
%6:     5     0     0     2    56
%7:     6     0     0     0    28
%8:     7     0     0     0     8
%9:     8     0     0     0     1

\begin{figure}[htbp]
\definecolor{colNONE}{HTML}{A9A9A9}
\definecolor{colC1C2}{HTML}{FED976}
\definecolor{colC2C1C2}{HTML}{FD8D3C}
\definecolor{colC1C1C2}{HTML}{E31A1C}
\definecolor{colALL}{HTML}{800026}
\DeclareRobustCommand{\coloredcircle}[1]{%
    \tikz[baseline=-0.5ex]\draw[fill=#1] (0,0) circle (0.6ex);%
}
\centering
\includegraphics[width=1.\textwidth]{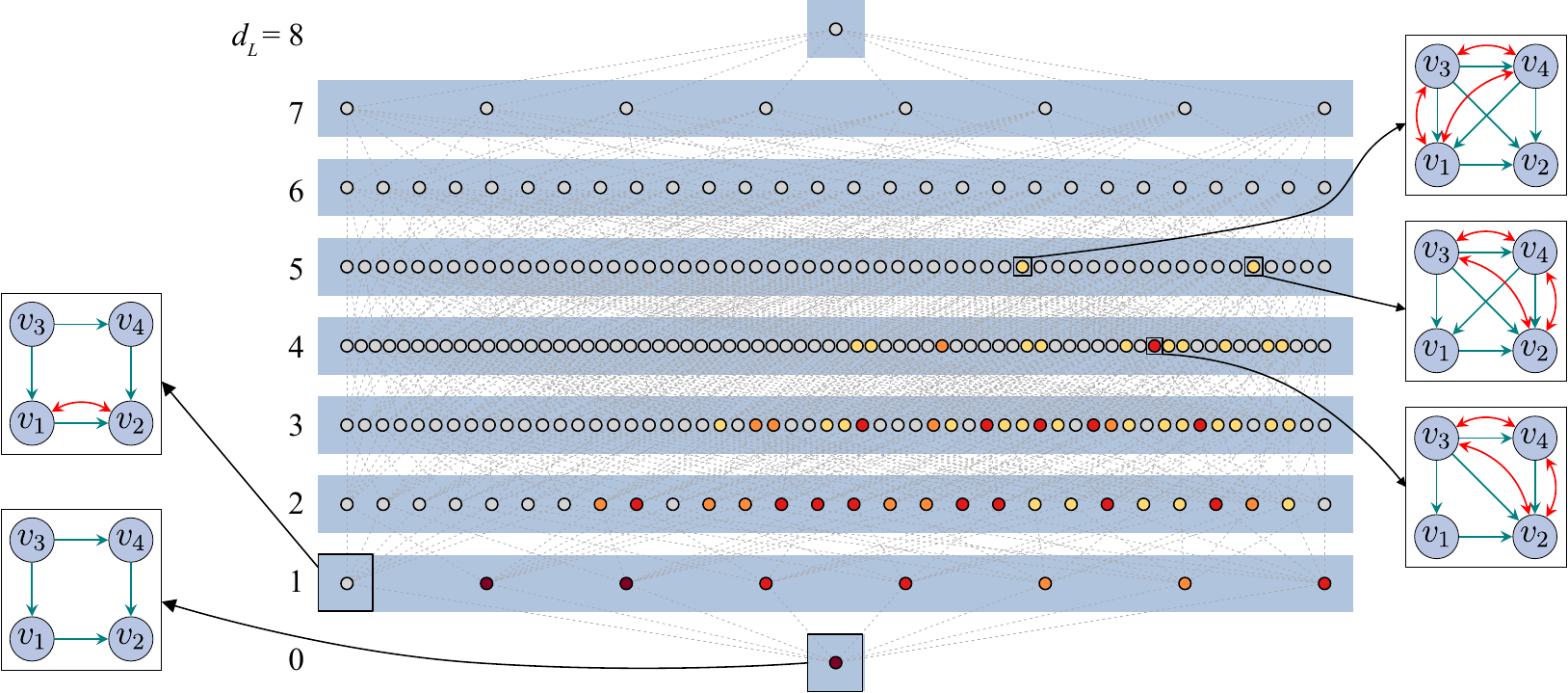}
\caption{An example on selecting an adjustment set that is robust to misspecification of the causal graph. In the Hasse diagram, the least element is the specified graph and each graph above represents a supergraph containing extra directed or bidirected edges. While adjustment sets $\{v_3\},\{v_4\},\{v_3,v_4\}$ are all valid for identifying the effect of $v_1$ on $v_2$ when the graph is correctly specified, the set $\{v_3,v_4\}$ is the most robust against misspecification. Valid adjustment sets are color-coded for each graph: \coloredcircle{colALL} all valid, \coloredcircle{colC2C1C2} $\{v_4\}$ and $\{v_3,v_4\}$, \coloredcircle{colC1C1C2} $\{v_3\}$ and $\{v_3,v_4\}$, \coloredcircle{colC1C2} $\{v_3,v_4\}$, \coloredcircle{colNONE} none.}
\label{fig:robust}
\end{figure}

\section{Discussion} \label{sec:discussion}
This paper introduces a model-oriented framework for defining a distance between graphs  relative to a class of graphs in consideration.
By treating each graph as a statistical (i.e., probabilistic or causal) model, we organize the class of graphs as a poset ordered by model inclusion. 
The poset induces a notion of neighborhood, which then yields a shortest path distance through neighbors as a metric in the space of graphs. 
By studying and exploiting the poset structures that are summarized in \cref{tab:summary}, we develop A* algorithms for computing the model-oriented distance that can scale up to a moderate number of vertices. 

By bridging graphical modeling with order theory, we open several avenues for future research. 
First, to compute the model-oriented distance between two MPDAGs, it remains an open problem to obtain an explicit graphical characterization of the neighborhood. 
Second, our distance can be extended to incorporate weights on the covering relations in the poset. In particular, one may introduce information-theoretic weights similar to those considered by \citet{meila2005comparing} for partitions. 
Third, it is of interest to consider model selection in the spirit of \citet{Taeb2023ModelSO}. 
Moreover, as an ongoing project, we are exploring ways to scale up the A* algorithm to larger graphs through better heuristics, upper bounds, memory management and parallelization. Finally, our distance is in principle applicable to many other statistical graphs, such as maximal ancestral graphs \citep{Richardson2002AncestralGM}, partial ancestral graphs \citep{spirtes2000causation} and local independence graphs for event data \citep{Didelez2007GraphicalMF,mogensen2020markov}.

%%%%%%%%%%%%%%%%%%%%%%%%%%%%%%%%%%%%%%%%%%%%%%
%% Support information, if any,             %%
%% should be provided in the                %%
%% Acknowledgements section.                %%
%%%%%%%%%%%%%%%%%%%%%%%%%%%%%%%%%%%%%%%%%%%%%%
\begin{acks}[Acknowledgments]
The authors thank undergraduate students Jian Kang, Carter Lembo and Arnav Mazumder at the University of Washington for helping with the implementation of our algorithms. FRG and AT would like to thank the Isaac Newton Institute for Mathematical Sciences, Cambridge, for support and hospitality during the programme \emph{Causal inference: From theory to practice and back again} (supported by EPSRC grant no EP/K032208/1) where part of the work on this paper was undertaken. AT acknowledges funding from NSF grant DMS-2413074.  
\end{acks}

% ----------------------------------
\begin{supplement}
\stitle{Supplement to ``Model-oriented graph distances via partially ordered sets''}
\sdescription{The supplement contains auxiliary lemmas, proofs and additional numerical results.}
\end{supplement}
% ----------------------------------
% ----------------------------------
\bibliographystyle{imsart-nameyear}
\bibliography{main.bib}

\begin{thebibliography}{66}
% BibTex style file: imsart-nameyear.bst, 2017-11-03
% Default style options (sort=1,type=nameyear).
% Used options (sort=1,type=nameyear).

\bibitem[\protect\citeauthoryear{Am\'endola
  et~al.}{2023}]{Amendola2021ThirdorderMV}
\begin{barticle}[author]
\bauthor{\bsnm{Am\'endola},~\bfnm{Carlos}\binits{C.}},
  \bauthor{\bsnm{Drton},~\bfnm{Mathias}\binits{M.}},
  \bauthor{\bsnm{Grosdos},~\bfnm{Alexandros}\binits{A.}},
  \bauthor{\bsnm{Homs},~\bfnm{Roser}\binits{R.}} \AND
  \bauthor{\bsnm{Robeva},~\bfnm{Elina}\binits{E.}}
(\byear{2023}).
\btitle{Third-order moment varieties of linear non-Gaussian graphical models}.
\bjournal{Information and Inference: A Journal of the IMA}
\bvolume{12}
\bpages{1405-1436}.
\end{barticle}
\endbibitem

\bibitem[\protect\citeauthoryear{Andersson, Madigan and
  Perlman}{1997}]{andersson1997characterization}
\begin{barticle}[author]
\bauthor{\bsnm{Andersson},~\bfnm{Steen~A.}\binits{S.~A.}},
  \bauthor{\bsnm{Madigan},~\bfnm{David}\binits{D.}} \AND
  \bauthor{\bsnm{Perlman},~\bfnm{Michael~D.}\binits{M.~D.}}
(\byear{1997}).
\btitle{A characterization of {M}arkov equivalence classes for acyclic
  digraphs}.
\bjournal{The Annals of Statistics}
\bvolume{25}
\bpages{505--541}.
\end{barticle}
\endbibitem

\bibitem[\protect\citeauthoryear{Brualdi, Graves and
  Lawrence}{1995}]{Brualdi1995CodesWA}
\begin{barticle}[author]
\bauthor{\bsnm{Brualdi},~\bfnm{Richard~A.}\binits{R.~A.}},
  \bauthor{\bsnm{Graves},~\bfnm{Janine~Smolin}\binits{J.~S.}} \AND
  \bauthor{\bsnm{Lawrence},~\bfnm{K.~Mark}\binits{K.~M.}}
(\byear{1995}).
\btitle{Codes with a poset metric}.
\bjournal{Discrete Mathematics}
\bvolume{147}
\bpages{57-72}.
\end{barticle}
\endbibitem

\bibitem[\protect\citeauthoryear{Chickering}{2002}]{chickering2002optimal}
\begin{barticle}[author]
\bauthor{\bsnm{Chickering},~\bfnm{David~Maxwell}\binits{D.~M.}}
(\byear{2002}).
\btitle{Optimal structure identification with greedy search}.
\bjournal{Journal of Machine Learning Research}
\bvolume{3}
\bpages{507--554}.
\end{barticle}
\endbibitem

\bibitem[\protect\citeauthoryear{Didelez}{2008}]{Didelez2007GraphicalMF}
\begin{barticle}[author]
\bauthor{\bsnm{Didelez},~\bfnm{Vanessa}\binits{V.}}
(\byear{2008}).
\btitle{Graphical models for marked point processes based on local
  independence}.
\bjournal{Journal of the Royal Statistical Society: Series B (Statistical
  Methodology)}
\bvolume{70}
\bpages{245--264}.
\end{barticle}
\endbibitem

\bibitem[\protect\citeauthoryear{Drton and Maathuis}{2017}]{drton2017structure}
\begin{barticle}[author]
\bauthor{\bsnm{Drton},~\bfnm{Mathias}\binits{M.}} \AND
  \bauthor{\bsnm{Maathuis},~\bfnm{Marloes~H}\binits{M.~H.}}
(\byear{2017}).
\btitle{Structure learning in graphical modeling}.
\bjournal{Annual Review of Statistics and Its Application}
\bvolume{4}
\bpages{365--393}.
\end{barticle}
\endbibitem

\bibitem[\protect\citeauthoryear{Evans}{2016}]{evans2016graphs}
\begin{barticle}[author]
\bauthor{\bsnm{Evans},~\bfnm{Robin~J}\binits{R.~J.}}
(\byear{2016}).
\btitle{Graphs for margins of Bayesian networks}.
\bjournal{Scandinavian Journal of Statistics}
\bvolume{43}
\bpages{625--648}.
\end{barticle}
\endbibitem

\bibitem[\protect\citeauthoryear{Evans}{2018}]{evans2018markov}
\begin{bincollection}[author]
\bauthor{\bsnm{Evans},~\bfnm{Robin}\binits{R.}}
(\byear{2018}).
\btitle{Markov properties for mixed graphical models}.
In \bbooktitle{Handbook of graphical models}
\bchapter{2},
\bpages{39--60}.
\bpublisher{CRC Press}.
\end{bincollection}
\endbibitem

\bibitem[\protect\citeauthoryear{Ferguson and Meyer}{2023}]{Ferguson}
\begin{barticle}[author]
\bauthor{\bsnm{Ferguson},~\bfnm{Daniel}\binits{D.}} \AND
  \bauthor{\bsnm{Meyer},~\bfnm{Fran{\c c}ois~G}\binits{F.~G.}}
(\byear{2023}).
\btitle{Theoretical analysis and computation of the sample Fr{\'e}chet mean of
  sets of large graphs for various metrics}.
\bjournal{Information and Inference: A Journal of the IMA}
\bvolume{12}
\bpages{1347-1404}.
\end{barticle}
\endbibitem

\bibitem[\protect\citeauthoryear{Foldes and
  Radeleczki}{2021}]{foldes2021distances}
\begin{barticle}[author]
\bauthor{\bsnm{Foldes},~\bfnm{Stephan}\binits{S.}} \AND
  \bauthor{\bsnm{Radeleczki},~\bfnm{S{\'a}ndor}\binits{S.}}
(\byear{2021}).
\btitle{On distances and metrics in discrete ordered sets}.
\bjournal{Mathematica Bohemica}
\bvolume{146}
\bpages{251--262}.
\end{barticle}
\endbibitem

\bibitem[\protect\citeauthoryear{Gillispie and
  Perlman}{2001}]{gillispie2001enumerating}
\begin{binproceedings}[author]
\bauthor{\bsnm{Gillispie},~\bfnm{Steven~B.}\binits{S.~B.}} \AND
  \bauthor{\bsnm{Perlman},~\bfnm{Michael~D.}\binits{M.~D.}}
(\byear{2001}).
\btitle{Enumerating Markov equivalence classes of acyclic digraph models}.
In \bbooktitle{Proceedings of the Seventeenth Conference in Uncertainty in
  Artificial Intelligence (UAI)}
\bpages{171--177}.
\end{binproceedings}
\endbibitem

\bibitem[\protect\citeauthoryear{Glymour, Zhang and
  Spirtes}{2019}]{glymour2019review}
\begin{barticle}[author]
\bauthor{\bsnm{Glymour},~\bfnm{Clark}\binits{C.}},
  \bauthor{\bsnm{Zhang},~\bfnm{Kun}\binits{K.}} \AND
  \bauthor{\bsnm{Spirtes},~\bfnm{Peter}\binits{P.}}
(\byear{2019}).
\btitle{Review of causal discovery methods based on graphical models}.
\bjournal{Frontiers in Genetics}
\bvolume{10}
\bpages{524}.
\end{barticle}
\endbibitem

\bibitem[\protect\citeauthoryear{Gr{\"{u}}ttemeier, Komusiewicz and
  Morawietz}{2021}]{Grttemeier2021OnTP}
\begin{binproceedings}[author]
\bauthor{\bsnm{Gr{\"{u}}ttemeier},~\bfnm{Niels}\binits{N.}},
  \bauthor{\bsnm{Komusiewicz},~\bfnm{Christian}\binits{C.}} \AND
  \bauthor{\bsnm{Morawietz},~\bfnm{Nils}\binits{N.}}
(\byear{2021}).
\btitle{On the parameterized complexity of polytree learning}.
In \bbooktitle{Proceedings of the Thirtieth International Joint Conference on
  Artificial Intelligence ({IJCAI})}
\bpages{4221--4227}.
\end{binproceedings}
\endbibitem

\bibitem[\protect\citeauthoryear{Guo, Perkovi{\'c} and
  Rotnitzky}{2023}]{guo2023variable}
\begin{barticle}[author]
\bauthor{\bsnm{Guo},~\bfnm{F.~Richard}\binits{F.~R.}},
  \bauthor{\bsnm{Perkovi{\'c}},~\bfnm{Emilija}\binits{E.}} \AND
  \bauthor{\bsnm{Rotnitzky},~\bfnm{Andrea}\binits{A.}}
(\byear{2023}).
\btitle{Variable elimination, graph reduction and the efficient g-formula}.
\bjournal{Biometrika}
\bvolume{110}
\bpages{739--761}.
\end{barticle}
\endbibitem

\bibitem[\protect\citeauthoryear{Han et~al.}{2016}]{han2016sparse}
\begin{barticle}[author]
\bauthor{\bsnm{Han},~\bfnm{Fang}\binits{F.}},
  \bauthor{\bsnm{Han},~\bfnm{Xiaoyan}\binits{X.}},
  \bauthor{\bsnm{Liu},~\bfnm{Han}\binits{H.}} \AND
  \bauthor{\bsnm{Caffo},~\bfnm{Brian}\binits{B.}}
(\byear{2016}).
\btitle{Sparse median graphs estimation in a high-dimensional semiparametric
  model}.
\bjournal{The Annals of Applied Statistics}
\bvolume{10}
\bpages{1397--1426}.
\end{barticle}
\endbibitem

\bibitem[\protect\citeauthoryear{Hart, Nilsson and
  Raphael}{1968}]{hart1968formal}
\begin{barticle}[author]
\bauthor{\bsnm{Hart},~\bfnm{Peter~E}\binits{P.~E.}},
  \bauthor{\bsnm{Nilsson},~\bfnm{Nils~J}\binits{N.~J.}} \AND
  \bauthor{\bsnm{Raphael},~\bfnm{Bertram}\binits{B.}}
(\byear{1968}).
\btitle{A formal basis for the heuristic determination of minimum cost paths}.
\bjournal{IEEE Transactions on Systems Science and Cybernetics}
\bvolume{4}
\bpages{100--107}.
\end{barticle}
\endbibitem

\bibitem[\protect\citeauthoryear{Hauser and
  B{\"u}hlmann}{2012}]{hauser2012characterization}
\begin{barticle}[author]
\bauthor{\bsnm{Hauser},~\bfnm{Alain}\binits{A.}} \AND
  \bauthor{\bsnm{B{\"u}hlmann},~\bfnm{Peter}\binits{P.}}
(\byear{2012}).
\btitle{Characterization and greedy learning of interventional Markov
  equivalence classes of directed acyclic graphs}.
\bjournal{Journal of Machine Learning Research}
\bvolume{13}
\bpages{2409--2464}.
\end{barticle}
\endbibitem

\bibitem[\protect\citeauthoryear{Hauser and {B{\"u}hlmann}}{2015}]{jointly}
\begin{barticle}[author]
\bauthor{\bsnm{Hauser},~\bfnm{A.}\binits{A.}} \AND
  \bauthor{\bsnm{{B{\"u}hlmann}},~\bfnm{P.}\binits{P.}}
(\byear{2015}).
\btitle{Jointly interventional and observational data: {E}stimation of
  interventional Markov equivalence classes of directed acyclic graphs}.
\bjournal{Journal of the Royal Statistical Society, Series B}
\bvolume{77}
\bpages{291--318}.
\end{barticle}
\endbibitem

\bibitem[\protect\citeauthoryear{Henckel, Perkovi{\'c} and
  Maathuis}{2022}]{henckel2022graphical}
\begin{barticle}[author]
\bauthor{\bsnm{Henckel},~\bfnm{Leonard}\binits{L.}},
  \bauthor{\bsnm{Perkovi{\'c}},~\bfnm{Emilija}\binits{E.}} \AND
  \bauthor{\bsnm{Maathuis},~\bfnm{Marloes~H}\binits{M.~H.}}
(\byear{2022}).
\btitle{Graphical criteria for efficient total effect estimation via adjustment
  in causal linear models}.
\bjournal{Journal of the Royal Statistical Society Series B: Statistical
  Methodology}
\bvolume{84}
\bpages{579--599}.
\end{barticle}
\endbibitem

\bibitem[\protect\citeauthoryear{Henckel, W{\"{u}}rtzen and
  Weichwald}{2024}]{henckel2024gadjid}
\begin{binproceedings}[author]
\bauthor{\bsnm{Henckel},~\bfnm{Leonard}\binits{L.}},
  \bauthor{\bsnm{W{\"{u}}rtzen},~\bfnm{Theo}\binits{T.}} \AND
  \bauthor{\bsnm{Weichwald},~\bfnm{Sebastian}\binits{S.}}
(\byear{2024}).
\btitle{Adjustment identification distance: {A} gadjid for causal structure
  learning}.
In \bbooktitle{Proceedings of the Fortieth Conference in Uncertainty in
  Artificial Intelligence (UAI)}
\bvolume{244}
\bpages{1569--1598}.
\end{binproceedings}
\endbibitem

\bibitem[\protect\citeauthoryear{Huang and
  Valtorta}{2006}]{huang2006identifiability}
\begin{binproceedings}[author]
\bauthor{\bsnm{Huang},~\bfnm{Yimin}\binits{Y.}} \AND
  \bauthor{\bsnm{Valtorta},~\bfnm{Marco}\binits{M.}}
(\byear{2006}).
\btitle{Identifiability in causal Bayesian networks: a sound and complete
  algorithm}.
In \bbooktitle{Proceedings of the Twenty-First National Conference on
  Artificial Intelligence (AAAI)}
\bpages{1149--1154}.
\end{binproceedings}
\endbibitem

\bibitem[\protect\citeauthoryear{{OEIS Foundation Inc.}}{2025}]{oeis2025}
\begin{bmisc}[author]
\bauthor{\bsnm{{OEIS Foundation Inc. }}}
(\byear{2025}).
\btitle{Number of essential graphs with $n$ nodes, Entry A007984 in The On-Line
  Encyclopedia of Integer Sequences}.
\end{bmisc}
\endbibitem

\bibitem[\protect\citeauthoryear{Jin, Zhang and
  Zhang}{2012}]{Jin2012OptimalityOG}
\begin{barticle}[author]
\bauthor{\bsnm{Jin},~\bfnm{Jiashun}\binits{J.}},
  \bauthor{\bsnm{Zhang},~\bfnm{Cun-Hui}\binits{C.-H.}} \AND
  \bauthor{\bsnm{Zhang},~\bfnm{Qi}\binits{Q.}}
(\byear{2012}).
\btitle{Optimality of graphlet screening in high dimensional variable
  selection}.
\bjournal{Journal of Machine Learning Research}
\bvolume{15}
\bpages{2723-2772}.
\end{barticle}
\endbibitem

\bibitem[\protect\citeauthoryear{Kallenberg}{2021}]{kallenberg2021foundations}
\begin{bbook}[author]
\bauthor{\bsnm{Kallenberg},~\bfnm{Olav}\binits{O.}}
(\byear{2021}).
\btitle{Foundations of Modern Probability},
\bedition{3rd} ed.
\bpublisher{Springer}.
\end{bbook}
\endbibitem

\bibitem[\protect\citeauthoryear{Ke, Jin and Fan}{2012}]{Ke2012CovariateAS}
\begin{barticle}[author]
\bauthor{\bsnm{Ke},~\bfnm{Zheng~Tracy}\binits{Z.~T.}},
  \bauthor{\bsnm{Jin},~\bfnm{Jiashun}\binits{J.}} \AND
  \bauthor{\bsnm{Fan},~\bfnm{Jianqing}\binits{J.}}
(\byear{2012}).
\btitle{Covariate assisted screening and estimation}.
\bjournal{The Annals of Statistics}
\bvolume{42}
\bpages{2202-2242}.
\end{barticle}
\endbibitem

\bibitem[\protect\citeauthoryear{Kendall}{1938}]{kendall1938new}
\begin{barticle}[author]
\bauthor{\bsnm{Kendall},~\bfnm{Maurice~G}\binits{M.~G.}}
(\byear{1938}).
\btitle{A new measure of rank correlation}.
\bjournal{Biometrika}
\bvolume{30}
\bpages{81--93}.
\end{barticle}
\endbibitem

\bibitem[\protect\citeauthoryear{Kocka, Bouckaert and
  Studen{\'{y}}}{2001}]{Kocka2001OnCI}
\begin{binproceedings}[author]
\bauthor{\bsnm{Kocka},~\bfnm{Tom{\'{a}}s}\binits{T.}},
  \bauthor{\bsnm{Bouckaert},~\bfnm{Remco~R.}\binits{R.~R.}} \AND
  \bauthor{\bsnm{Studen{\'{y}}},~\bfnm{Milan}\binits{M.}}
(\byear{2001}).
\btitle{On characterizing inclusion of Bayesian Networks}.
In \bbooktitle{Proceedings of the Seventeenth Conference in Uncertainty in
  Artificial Intelligence (UAI)}
\bpages{261--268}.
\end{binproceedings}
\endbibitem

\bibitem[\protect\citeauthoryear{Lauritzen}{1996}]{lauritzen1996graphical}
\begin{bbook}[author]
\bauthor{\bsnm{Lauritzen},~\bfnm{Steffen~L.}\binits{S.~L.}}
(\byear{1996}).
\btitle{Graphical Models}.
\bpublisher{Oxford University Press}.
\end{bbook}
\endbibitem

\bibitem[\protect\citeauthoryear{Lauritzen and
  Spiegelhalter}{1988}]{lauritzen1988local}
\begin{barticle}[author]
\bauthor{\bsnm{Lauritzen},~\bfnm{Steffen~L}\binits{S.~L.}} \AND
  \bauthor{\bsnm{Spiegelhalter},~\bfnm{David~J}\binits{D.~J.}}
(\byear{1988}).
\btitle{Local computations with probabilities on graphical structures and their
  application to expert systems}.
\bjournal{Journal of the Royal Statistical Society: Series B (Methodological)}
\bvolume{50}
\bpages{157--194}.
\end{barticle}
\endbibitem

\bibitem[\protect\citeauthoryear{Lunn et~al.}{2000}]{lunn2000winbugs}
\begin{barticle}[author]
\bauthor{\bsnm{Lunn},~\bfnm{David~J}\binits{D.~J.}},
  \bauthor{\bsnm{Thomas},~\bfnm{Andrew}\binits{A.}},
  \bauthor{\bsnm{Best},~\bfnm{Nicky}\binits{N.}} \AND
  \bauthor{\bsnm{Spiegelhalter},~\bfnm{David}\binits{D.}}
(\byear{2000}).
\btitle{WinBUGS - {A} Bayesian modelling framework: {C}oncepts, structure, and
  extensibility}.
\bjournal{Statistics and Computing}
\bvolume{10}
\bpages{325--337}.
\end{barticle}
\endbibitem

\bibitem[\protect\citeauthoryear{Meek}{1995}]{meek1995causal}
\begin{binproceedings}[author]
\bauthor{\bsnm{Meek},~\bfnm{Christopher}\binits{C.}}
(\byear{1995}).
\btitle{Causal inference and causal explanation with background knowledge}.
In \bbooktitle{Proceedings of the Eleventh Annual Conference on Uncertainty in
  Artificial Intelligence (UAI)}
\bpages{403--410}.
\end{binproceedings}
\endbibitem

\bibitem[\protect\citeauthoryear{Meila}{2005}]{meila2005comparing}
\begin{binproceedings}[author]
\bauthor{\bsnm{Meila},~\bfnm{Marina}\binits{M.}}
(\byear{2005}).
\btitle{Comparing clusterings: {A}n axiomatic view}.
In \bbooktitle{Proceedings of the Twenty-Second International Conference on
  Machine Learning (ICML)}
\bpages{577--584}.
\end{binproceedings}
\endbibitem

\bibitem[\protect\citeauthoryear{Mogensen and
  Hansen}{2020}]{mogensen2020markov}
\begin{barticle}[author]
\bauthor{\bsnm{Mogensen},~\bfnm{S{\o}ren~Wengel}\binits{S.~W.}} \AND
  \bauthor{\bsnm{Hansen},~\bfnm{Niels~Richard}\binits{N.~R.}}
(\byear{2020}).
\btitle{Markov equivalence of marginalized local independence graphs}.
\bjournal{The Annals of Statistics}
\bvolume{48}
\bpages{539--559}.
\end{barticle}
\endbibitem

\bibitem[\protect\citeauthoryear{Monjardet}{1981}]{Monjardet1981MetricsOP}
\begin{barticle}[author]
\bauthor{\bsnm{Monjardet},~\bfnm{Bernard}\binits{B.}}
(\byear{1981}).
\btitle{Metrics on partially ordered sets - A survey}.
\bjournal{Discrete Mathematics}
\bvolume{35}
\bpages{173-184}.
\end{barticle}
\endbibitem

\bibitem[\protect\citeauthoryear{Pearl}{1993}]{pearlBayesianAnalysisExpert1993}
\begin{barticle}[author]
\bauthor{\bsnm{Pearl},~\bfnm{Judea}\binits{J.}}
(\byear{1993}).
\btitle{Comment: graphical models, causality and intervention}.
\bjournal{Statistical Science}
\bvolume{8}
\bpages{266--269}.
\end{barticle}
\endbibitem

\bibitem[\protect\citeauthoryear{Pearl}{2009}]{pearl2009causality}
\begin{bbook}[author]
\bauthor{\bsnm{Pearl},~\bfnm{Judea}\binits{J.}}
(\byear{2009}).
\btitle{Causality},
\bedition{2nd} ed.
\bpublisher{Cambridge University Press}.
\end{bbook}
\endbibitem

\bibitem[\protect\citeauthoryear{Pearl and Verma}{1991}]{pearl1991inferred}
\begin{binproceedings}[author]
\bauthor{\bsnm{Pearl},~\bfnm{Judea}\binits{J.}} \AND
  \bauthor{\bsnm{Verma},~\bfnm{Thomas}\binits{T.}}
(\byear{1991}).
\btitle{A theory of inferred causation}.
In \bbooktitle{Proceedings of the {{Second International Conference}} on
  {{Principles}} of {{Knowledge Representation}} and {{Reasoning}}}.
\bseries{{{KR}}'91}
\bpages{441--452}.
\bpublisher{{Morgan Kaufmann Publishers Inc.}}, \baddress{{San Francisco, CA,
  USA}}.
\end{binproceedings}
\endbibitem

\bibitem[\protect\citeauthoryear{Perkovic}{2020}]{Perkovic2019IdentifyingCE}
\begin{binproceedings}[author]
\bauthor{\bsnm{Perkovic},~\bfnm{Emilija}\binits{E.}}
(\byear{2020}).
\btitle{Identifying causal effects in maximally oriented partially directed
  acyclic graphs}.
In \bbooktitle{Proceedings of the Thirty-Sixth Conference on Uncertainty in
  Artificial Intelligence (UAI)}
\bvolume{124}
\bpages{530--539}.
\end{binproceedings}
\endbibitem

\bibitem[\protect\citeauthoryear{Perrier, Imoto and
  Miyano}{2008}]{perrier2008finding}
\begin{barticle}[author]
\bauthor{\bsnm{Perrier},~\bfnm{Eric}\binits{E.}},
  \bauthor{\bsnm{Imoto},~\bfnm{Seiya}\binits{S.}} \AND
  \bauthor{\bsnm{Miyano},~\bfnm{Satoru}\binits{S.}}
(\byear{2008}).
\btitle{Finding optimal {B}ayesian network given a super-structure}.
\bjournal{Journal of Machine Learning Research}
\bvolume{9}
\bpages{2251--2286}.
\end{barticle}
\endbibitem

\bibitem[\protect\citeauthoryear{Peters and
  B{\"u}hlmann}{2015}]{peters2015structural}
\begin{barticle}[author]
\bauthor{\bsnm{Peters},~\bfnm{Jonas}\binits{J.}} \AND
  \bauthor{\bsnm{B{\"u}hlmann},~\bfnm{Peter}\binits{P.}}
(\byear{2015}).
\btitle{Structural intervention distance for evaluating causal graphs}.
\bjournal{Neural Computation}
\bvolume{27}
\bpages{771--799}.
\end{barticle}
\endbibitem

\bibitem[\protect\citeauthoryear{Rebane and Pearl}{1987}]{Rebane1987TheRO}
\begin{binproceedings}[author]
\bauthor{\bsnm{Rebane},~\bfnm{George}\binits{G.}} \AND
  \bauthor{\bsnm{Pearl},~\bfnm{Judea}\binits{J.}}
(\byear{1987}).
\btitle{The recovery of causal poly-trees from statistical data}.
In \bbooktitle{Proceedings of the Third Annual Conference on Uncertainty in
  Artificial Intelligence (UAI)}
\bpages{175--182}.
\end{binproceedings}
\endbibitem

\bibitem[\protect\citeauthoryear{Richardson}{2003}]{richardson2003markov}
\begin{barticle}[author]
\bauthor{\bsnm{Richardson},~\bfnm{Thomas}\binits{T.}}
(\byear{2003}).
\btitle{Markov properties for acyclic directed mixed graphs}.
\bjournal{Scandinavian Journal of Statistics}
\bvolume{30}
\bpages{145--157}.
\end{barticle}
\endbibitem

\bibitem[\protect\citeauthoryear{Richardson and
  Robins}{2013}]{richardson2013single}
\begin{btechreport}[author]
\bauthor{\bsnm{Richardson},~\bfnm{Thomas~S.}\binits{T.~S.}} \AND
  \bauthor{\bsnm{Robins},~\bfnm{James~M.}\binits{J.~M.}}
(\byear{2013}).
\btitle{Single world intervention graphs ({{SWIGs}}): {{A}} unification of the
  counterfactual and graphical approaches to causality}
\btype{Technical Report} No. \bnumber{128},
\bpublisher{{Center for the Statistics and the Social Sciences, University of
  Washington Series}}.
\end{btechreport}
\endbibitem

\bibitem[\protect\citeauthoryear{Richardson and
  Spirtes}{2002}]{Richardson2002AncestralGM}
\begin{barticle}[author]
\bauthor{\bsnm{Richardson},~\bfnm{Thomas~S.}\binits{T.~S.}} \AND
  \bauthor{\bsnm{Spirtes},~\bfnm{Peter}\binits{P.}}
(\byear{2002}).
\btitle{Ancestral graph Markov models}.
\bjournal{The Annals of Statistics}
\bvolume{30}
\bpages{962-1030}.
\end{barticle}
\endbibitem

\bibitem[\protect\citeauthoryear{Richardson
  et~al.}{2023}]{richardson2023nested}
\begin{barticle}[author]
\bauthor{\bsnm{Richardson},~\bfnm{Thomas~S}\binits{T.~S.}},
  \bauthor{\bsnm{Evans},~\bfnm{Robin~J}\binits{R.~J.}},
  \bauthor{\bsnm{Robins},~\bfnm{James~M}\binits{J.~M.}} \AND
  \bauthor{\bsnm{Shpitser},~\bfnm{Ilya}\binits{I.}}
(\byear{2023}).
\btitle{Nested Markov properties for acyclic directed mixed graphs}.
\bjournal{The Annals of Statistics}
\bvolume{51}
\bpages{334--361}.
\end{barticle}
\endbibitem

\bibitem[\protect\citeauthoryear{Rios, Moffa and
  Kuipers}{2025}]{rios2023benchpressscalableversatileworkflow}
\begin{barticle}[author]
\bauthor{\bsnm{Rios},~\bfnm{Felix~L.}\binits{F.~L.}},
  \bauthor{\bsnm{Moffa},~\bfnm{Giusi}\binits{G.}} \AND
  \bauthor{\bsnm{Kuipers},~\bfnm{Jack}\binits{J.}}
(\byear{2025}).
\btitle{Benchpress: a versatile platform for structure learning in causal and
  probabilistic graphical models}.
\bjournal{Journal of Statistical Software}
\bvolume{114}
\bpages{1--43}.
\end{barticle}
\endbibitem

\bibitem[\protect\citeauthoryear{Robins}{1986}]{robins1986new}
\begin{barticle}[author]
\bauthor{\bsnm{Robins},~\bfnm{James~M.}\binits{J.~M.}}
(\byear{1986}).
\btitle{A new approach to causal inference in mortality studies with a
  sustained exposure period-application to control of the healthy worker
  survivor effect}.
\bjournal{Mathematical Modelling}
\bvolume{7}
\bpages{1393--1512}.
\end{barticle}
\endbibitem

\bibitem[\protect\citeauthoryear{Rotnitzky and
  Smucler}{2020}]{rotnitzky2020efficient}
\begin{barticle}[author]
\bauthor{\bsnm{Rotnitzky},~\bfnm{Andrea}\binits{A.}} \AND
  \bauthor{\bsnm{Smucler},~\bfnm{Ezequiel}\binits{E.}}
(\byear{2020}).
\btitle{Efficient adjustment sets for population average causal treatment
  effect estimation in graphical models}.
\bjournal{Journal of Machine Learning Research}
\bvolume{21}
\bpages{1--86}.
\end{barticle}
\endbibitem

\bibitem[\protect\citeauthoryear{Scutari, Graafland and
  Guti{\'e}rrez}{2019}]{scutari2019learns}
\begin{barticle}[author]
\bauthor{\bsnm{Scutari},~\bfnm{Marco}\binits{M.}},
  \bauthor{\bsnm{Graafland},~\bfnm{Catharina~Elisabeth}\binits{C.~E.}} \AND
  \bauthor{\bsnm{Guti{\'e}rrez},~\bfnm{Jos{\'e}~Manuel}\binits{J.~M.}}
(\byear{2019}).
\btitle{Who learns better Bayesian network structures: Accuracy and speed of
  structure learning algorithms}.
\bjournal{International Journal of Approximate Reasoning}
\bvolume{115}
\bpages{235--253}.
\end{barticle}
\endbibitem

\bibitem[\protect\citeauthoryear{Sepehr and Materassi}{2019}]{Sepehr2019AnAT}
\begin{binproceedings}[author]
\bauthor{\bsnm{Sepehr},~\bfnm{Firoozeh}\binits{F.}} \AND
  \bauthor{\bsnm{Materassi},~\bfnm{Donatello}\binits{D.}}
(\byear{2019}).
\btitle{An algorithm to learn polytree networks with hidden nodes}.
In \bbooktitle{Proceedings of the Thirty-Third Annual Conference on Neural
  Information Processing Systems (NeuRIPS)}
\bpages{15084--15093}.
\end{binproceedings}
\endbibitem

\bibitem[\protect\citeauthoryear{Shpitser and
  Pearl}{2006}]{shpitser2006identification}
\begin{binproceedings}[author]
\bauthor{\bsnm{Shpitser},~\bfnm{Ilya}\binits{I.}} \AND
  \bauthor{\bsnm{Pearl},~\bfnm{Judea}\binits{J.}}
(\byear{2006}).
\btitle{Identification of joint interventional distributions in recursive
  {semi-Markovian} causal models}.
In \bbooktitle{Proceedings of the Twenty-First National Conference on
  Artificial Intelligence (AAAI)}
\bpages{1219--1226}.
\end{binproceedings}
\endbibitem

\bibitem[\protect\citeauthoryear{Shpitser, Richardson and
  Robins}{2022}]{shpitser2022multivariate}
\begin{binbook}[author]
\bauthor{\bsnm{Shpitser},~\bfnm{Ilya}\binits{I.}},
  \bauthor{\bsnm{Richardson},~\bfnm{Thomas~S}\binits{T.~S.}} \AND
  \bauthor{\bsnm{Robins},~\bfnm{James~M}\binits{J.~M.}}
(\byear{2022}).
\btitle{Multivariate counterfactual systems and causal graphical models}
In \bbooktitle{Probabilistic and Causal Inference: The Works of Judea Pearl}
\bchapter{41},
\bpages{813--852}.
\bpublisher{Association for Computing Machinery}.
\end{binbook}
\endbibitem

\bibitem[\protect\citeauthoryear{Spirtes, Glymour and
  Scheines}{2000}]{spirtes2000causation}
\begin{bbook}[author]
\bauthor{\bsnm{Spirtes},~\bfnm{Peter}\binits{P.}},
  \bauthor{\bsnm{Glymour},~\bfnm{Clark}\binits{C.}} \AND
  \bauthor{\bsnm{Scheines},~\bfnm{Richard}\binits{R.}}
(\byear{2000}).
\btitle{Causation, Prediction, and Search},
\bedition{2nd} ed.
\bpublisher{MIT Press}.
\end{bbook}
\endbibitem

\bibitem[\protect\citeauthoryear{Stanley}{2011}]{stanley2011enumerative}
\begin{bbook}[author]
\bauthor{\bsnm{Stanley},~\bfnm{Richard~P.}\binits{R.~P.}}
(\byear{2011}).
\btitle{Enumerative Combinatorics},
\bedition{2nd} ed.
\bseries{Cambridge Studies in Advanced Mathematics}
\bvolume{1}.
\bpublisher{Cambridge University Press}.
\end{bbook}
\endbibitem

\bibitem[\protect\citeauthoryear{Studen{\`y}}{1992}]{studeny1992conditional}
\begin{binproceedings}[author]
\bauthor{\bsnm{Studen{\`y}},~\bfnm{Milan}\binits{M.}}
(\byear{1992}).
\btitle{Conditional independence relations have no finite complete
  characterization}.
In \bbooktitle{Information Theory, Statistical Decision Functions and Random
  Processes. Transactions of the Eleventh Prague Conference vol. B}
\bpages{377--396}.
\end{binproceedings}
\endbibitem

\bibitem[\protect\citeauthoryear{Taeb, B{\"u}hlmann and
  Chandrasekaran}{2024}]{Taeb2023ModelSO}
\begin{barticle}[author]
\bauthor{\bsnm{Taeb},~\bfnm{Armeen}\binits{A.}},
  \bauthor{\bsnm{B{\"u}hlmann},~\bfnm{Peter}\binits{P.}} \AND
  \bauthor{\bsnm{Chandrasekaran},~\bfnm{Venkat}\binits{V.}}
(\byear{2024}).
\btitle{Model selection over partially ordered sets}.
\bjournal{Proceedings of the National Academy of Sciences of the United States
  of America}
\bvolume{121}
\bpages{8}.
\end{barticle}
\endbibitem

\bibitem[\protect\citeauthoryear{Tramontano, Monod and
  Drton}{2022}]{Tramontano}
\begin{binproceedings}[author]
\bauthor{\bsnm{Tramontano},~\bfnm{Daniele}\binits{D.}},
  \bauthor{\bsnm{Monod},~\bfnm{Anthea}\binits{A.}} \AND
  \bauthor{\bsnm{Drton},~\bfnm{Mathias}\binits{M.}}
(\byear{2022}).
\btitle{Learning linear non-Gaussian polytree models}.
In \bbooktitle{Proceedings of the Thirty-Eighth Conference on Uncertainty in
  Artificial Intelligence, (UAI)}
\bvolume{180}
\bpages{1960--1969}.
\end{binproceedings}
\endbibitem

\bibitem[\protect\citeauthoryear{Trotter}{1992}]{Fishburn1993CombinatoricsAP}
\begin{bbook}[author]
\bauthor{\bsnm{Trotter},~\bfnm{William~T}\binits{W.~T.}}
(\byear{1992}).
\btitle{Combinatorics and Partially ordered Sets: Dimension Theory}.
\bpublisher{Johns Hopkins University Press}.
\end{bbook}
\endbibitem

\bibitem[\protect\citeauthoryear{Tsamardinos, Brown and
  Aliferis}{2006}]{tsamardinos2006max}
\begin{barticle}[author]
\bauthor{\bsnm{Tsamardinos},~\bfnm{Ioannis}\binits{I.}},
  \bauthor{\bsnm{Brown},~\bfnm{Laura~E}\binits{L.~E.}} \AND
  \bauthor{\bsnm{Aliferis},~\bfnm{Constantin~F}\binits{C.~F.}}
(\byear{2006}).
\btitle{The max-min hill-climbing {Bayesian} network structure learning
  algorithm}.
\bjournal{Machine Learning}
\bvolume{65}
\bpages{31--78}.
\end{barticle}
\endbibitem

\bibitem[\protect\citeauthoryear{Verma and
  Pearl}{1990}]{Verma1990EquivalenceAS}
\begin{binproceedings}[author]
\bauthor{\bsnm{Verma},~\bfnm{Thomas}\binits{T.}} \AND
  \bauthor{\bsnm{Pearl},~\bfnm{Judea}\binits{J.}}
(\byear{1990}).
\btitle{Equivalence and synthesis of causal models}.
In \bbooktitle{Proceedings of the Sixth Annual Conference on Uncertainty in
  Artificial Intelligence (UAI)}
\bpages{255--270}.
\end{binproceedings}
\endbibitem

\bibitem[\protect\citeauthoryear{Viinikka, Eggeling and
  Koivisto}{2018}]{viinikka2018intersection}
\begin{binproceedings}[author]
\bauthor{\bsnm{Viinikka},~\bfnm{Jussi}\binits{J.}},
  \bauthor{\bsnm{Eggeling},~\bfnm{Ralf}\binits{R.}} \AND
  \bauthor{\bsnm{Koivisto},~\bfnm{Mikko}\binits{M.}}
(\byear{2018}).
\btitle{Intersection-validation: {A} method for evaluating structure learning
  without ground truth}.
In \bbooktitle{Proceedings of the Twenty-First International Conference on
  Artificial Intelligence and Statistics (AISTATS)}
\bvolume{84}
\bpages{1570--1578}.
\end{binproceedings}
\endbibitem

\bibitem[\protect\citeauthoryear{Wahl and Runge}{2025}]{wahl2025metric}
\begin{binproceedings}[author]
\bauthor{\bsnm{Wahl},~\bfnm{Jonas}\binits{J.}} \AND
  \bauthor{\bsnm{Runge},~\bfnm{Jakob}\binits{J.}}
(\byear{2025}).
\btitle{Separation-based distance measures for causal graphs}.
In \bbooktitle{Proceedings of The Twenty-Eighth International Conference on
  Artificial Intelligence and Statistics (AISTATS)}
\bvolume{258}
\bpages{3412--3420}.
\end{binproceedings}
\endbibitem

\bibitem[\protect\citeauthoryear{Wang, Kolar and
  Drton}{2025}]{wang2025confidence}
\begin{barticle}[author]
\bauthor{\bsnm{Wang},~\bfnm{Y~Samuel}\binits{Y.~S.}},
  \bauthor{\bsnm{Kolar},~\bfnm{Mladen}\binits{M.}} \AND
  \bauthor{\bsnm{Drton},~\bfnm{Mathias}\binits{M.}}
(\byear{2025}).
\btitle{Confidence sets for causal orderings}.
\bjournal{Journal of the American Statistical Association}
\bvolume{just-accepted}
\bpages{1--25}.
\end{barticle}
\endbibitem

\bibitem[\protect\citeauthoryear{Wien{\"{o}}bst, Bannach and
  Liskiewicz}{2021}]{wienobst2021extendability}
\begin{binproceedings}[author]
\bauthor{\bsnm{Wien{\"{o}}bst},~\bfnm{Marcel}\binits{M.}},
  \bauthor{\bsnm{Bannach},~\bfnm{Max}\binits{M.}} \AND
  \bauthor{\bsnm{Liskiewicz},~\bfnm{Maciej}\binits{M.}}
(\byear{2021}).
\btitle{Extendability of causal graphical models: {A}lgorithms and
  computational complexity}.
In \bbooktitle{Proceedings of the Thirty-Seventh Conference on Uncertainty in
  Artificial Intelligence (UAI)}
\bvolume{161}
\bpages{1248--1257}.
\end{binproceedings}
\endbibitem

\bibitem[\protect\citeauthoryear{Wright}{1934}]{wright1934method}
\begin{barticle}[author]
\bauthor{\bsnm{Wright},~\bfnm{Sewall}\binits{S.}}
(\byear{1934}).
\btitle{The method of path coefficients}.
\bjournal{The Annals of Mathematical Statistics}
\bvolume{5}
\bpages{161--215}.
\end{barticle}
\endbibitem

\bibitem[\protect\citeauthoryear{Zhao}{2025}]{zhao2025statistical}
\begin{barticle}[author]
\bauthor{\bsnm{Zhao},~\bfnm{Qingyuan}\binits{Q.}}
(\byear{2025}).
\btitle{On statistical and causal models associated with acyclic directed mixed
  graphs}.
\bjournal{arXiv preprint arXiv:2501.03048}.
\end{barticle}
\endbibitem

\end{thebibliography}
% ----------------------------------
\newpage
\begin{appendix}
\appendixnumbering
\crefalias{section}{appendix}
\crefalias{subsection}{appendix}
\crefalias{subsubsection}{appendix}

\section{Proofs: model maps and associated graphs} \label{sec:pf}
\subsection{Injective model maps} \label{sec:inj}
We show that the models $\Mug, \Mcpdag, \Mdag, \Mmpdag$, as defined in \cref{sec:models}, are injective in the sense that no two different graphs represent the same model.   

\begin{assumption} \label{ass:inj}
$\sV$ is a finite vertex set. For every $v \in \sV$, $\mathcal{X}_v$ is either infinite or finite with two or more elements. 
\end{assumption}

\begin{theorem}
Under \cref{ass:inj}, we have the following results.
\begin{enumerate}[(i)]
\item $\Mug: \g = \{\text{all UGs over vertex set $\sV$}\} \rightarrow 2^{\Mo_+}$ is injective.
\item $\Mcpdag: \g = \{\text{all CPDAGs over vertex set $\sV$}\} \rightarrow 2^{\Mo}$ is injective.
\item $\Madmg: \g = \{\text{all ADMGs over vertex set $\sV$}\} \rightarrow 2^{\Mc}$ is injective. 
\item $\Mdag: \g = \{\text{all DAGs over vertex set $\sV$}\} \rightarrow 2^{\Mc} $ is injective.
\item $\Mmpdag: \g = \{\text{all MPDAGs over vertex set $\sV$}\} \rightarrow 2^{\Mc} $ is injective. 
\end{enumerate}
\end{theorem}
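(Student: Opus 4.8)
The plan is to prove each part by \emph{recovering} the underlying graph from the model it represents: for each class I exhibit model-level invariants --- conditional independences read off the observational margin, or direct effects read off interventional distributions --- that pin down the graph's defining features, and I check this recovery on witness distributions built under \cref{ass:inj}. In every case the forward implication (a graphical feature forces a model constraint) follows directly from the Markov properties collected in \cref{sec:models}, while the reverse implication (a model constraint forces a graphical feature) is where witnesses are needed. Constructing sufficiently generic witnesses is the recurring technical obstacle.

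For $\Mug$ (part (i)) I recover the edge set: two vertices $i,j$ are non-adjacent in $\G$ exactly when $X_i \indep X_j \mid X_{\sV \setminus \{i,j\}}$ holds for every $P \in \Mug(\G)$. One direction is the global Markov property in \cref{eq:m-ug}, since non-adjacent $i,j$ are separated by $\sV\setminus\{i,j\}$; for the converse I build a strictly positive density with a genuine pairwise interaction across the edge $\{i,j\}$ (a log-linear factor using two distinct states of $\mathcal{X}_i$ and $\mathcal{X}_j$, guaranteed by \cref{ass:inj}) that violates this independence. Since the edge set determines a UG, distinct UGs yield distinct models. For $\Mcpdag$ (part (ii)) I use that a CPDAG is uniquely determined by its skeleton and v-structures \citep{Verma1990EquivalenceAS,andersson1997characterization}, i.e., by the Markov equivalence class. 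The independences holding for all $P \in \Mcpdag(\G)$ are precisely the d-separations of any represented $\D$: the inclusion ``$\supseteq$'' is \cref{eq:m-cpdag}, and ``$\subseteq$'' follows once I exhibit a distribution faithful to $\D$. Distinct CPDAGs induce distinct d-separation relations, hence distinct sets of universal independences, hence distinct models.

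For $\Mdag$ (part (iii)) the reading is causal, so distinct but Markov-equivalent DAGs must still be separated; I therefore recover the parent sets rather than merely the d-separations. The invariant is: $a \in \Pa_{\D}(b)$ if and only if the law of the interventional variable $X_b(x_{\sV \setminus \{b\}})$ depends on the coordinate $x_a$ for some $P \in \Mdag(\D)$. If $a \notin \Pa_{\D}(b)$, property (1) of the causal Markov condition forces $X_b(x_{\sV\setminus\{b\}}) = X_b(x_{\Pa_{\D}(b)})$ for every $P$, removing the dependence; if $a \in \Pa_{\D}(b)$, I construct independent one-step-ahead building blocks realizing a genuine dependence of $X_b$ on $x_a$, again using two states from \cref{ass:inj}. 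The parent sets determine $\D$, giving injectivity. For $\Mmpdag$ (part (iv)) I reduce to part (iii). Writing $[\G]$ for the set of DAGs represented by $\G$, it suffices to recover $[\G]$, since the MPDAG is the graph whose skeleton is the common skeleton and whose edges are directed exactly when oriented identically across $[\G]$. I recover $[\G]$ as the maximal DAG causal models contained in $\Mmpdag(\G) = \bigcup_{\D \in [\G]} \Mdag(\D)$. The crux is the lemma: if $\Mdag(\D'') \subseteq \Mmpdag(\G)$ then $\D''$ is a directed subgraph of some $\tilde{\D} \in [\G]$. To see this, take $P \in \Mdag(\D'')$ realizing all direct effects of $\D''$; since $P \in \Mmpdag(\G)$ it lies in some $\Mdag(\tilde{\D})$, and the causal Markov property for $\tilde{\D}$ forces every parent relation witnessed by $P$ to appear in $\tilde{\D}$, whence $\D'' \subseteq \tilde{\D}$. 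Because all members of $[\G]$ share a single skeleton (they are Markov equivalent), no strict supergraph of a member can be represented, so the maximal represented DAG models are exactly $\{\Mdag(\D): \D \in [\G]\}$; this recovers $[\G]$, and hence $\G$.

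The main obstacle, common to all four parts, is the construction of witness distributions generic enough to make the reverse implications hold uniformly across both the continuous and the finite-with-at-least-two-states cases of \cref{ass:inj}: faithful distributions for the CPDAG argument, and distributions realizing all direct effects for the DAG and MPDAG arguments. I expect to handle the continuous case with linear-Gaussian or additive-noise structural equations and the discrete case with explicitly chosen conditional probability tables, verifying in each case that the intended (in)dependences and direct effects are genuinely present. Once this genericity is secured, the recovery maps above invert the four model maps and \cref{cond:inject} follows.
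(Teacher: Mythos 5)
Your proposal is correct and follows essentially the same strategy as the paper's proof: recover the graph from model-level invariants (pairwise independences for UGs, d-separations via a faithful witness for CPDAGs, parent sets via interventional witnesses for DAGs), with the witness constructions under \cref{ass:inj} left at the same level of detail as in the paper. The only organizational difference is in part (iv), where you reconstruct the full set $[\G]$ as the maximal DAG submodels of $\Mmpdag(\G)$, whereas the paper argues more directly by invoking completeness of Meek's rules to find a DAG $\D_1$ represented by one MPDAG but not the other and then reusing the part-(iii) witness $P'$ to separate the two models; both arguments rest on the same two ingredients, so this is a repackaging rather than a different route.
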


\begin{proof} 
The proof for $|\sV| = 1$ is trivial. In what follows, we assume $|\sV| \geq 2$. 
\begin{enumerate}[(i)]
\item For every $\G = (\sV, \sE) \in \g$, $\Mug(\G)$ can be characterized by the pairwise Markov property \citep[\S3.2.1]{lauritzen1996graphical}
\[ (u,v) \notin \sE \implies X_u \indep X_v \mid X_{\sV \setminus \{u,v\}}, \quad \forall u, v \in \sV, u \neq v. \]
Further, under \cref{ass:inj}, there exists $P \in \Mug(\G)$ that is faithful to $\G$, i.e., 
\[ (u,v) \in \sE \implies X_u \centernot{\indep} X_v \mid X_{\sV \setminus \{u,v\}} \text{ under $P$}, \quad \forall u, v \in \sV. \]
This shows that the map must be injective. 

\item This directly follows from the characterization of Markov equivalent DAGs; see \citet{andersson1997characterization}.

\item Fix $\G \in \g$. For showing injectivity, it suffices to exhibit a law $P_0 \in \Madmg(\G)$ such that $P_0 \notin \Madmg(\G')$ for every $\G' \neq \G$. 
Fix $x \in \mathcal{X}$ and consider $(X_v(x): v \in \sV)$, namely the naturally occurring values of $X$ under the intervention that imposes $x_v$ on $X_v$ for each $v$. 
By (i) in \cref{eq:m-admg} and $|\sV| < \infty$, we have
\begin{equation} \label{eq:admg-inj-1}
P\left( X_v(x) = X_v(x_{\Pa(v)}),\; \forall v \in \sV \right) = 1, \quad \forall P \in \Madmg(\G).
\end{equation}
Further, we consider the set of non-degenerate laws
\[ \Gamma_1:=\left\{P \in \Madmg(\G): P\left(X_v(x) = X_v(x_{\sW}) \right) < 1, \quad \forall v \in \sV, \; \sW \subseteq \sV, \; \sW \neq \Pa(v) \right\}. \]
By (ii) in \cref{eq:m-admg}, we have 
\begin{equation} \label{eq:admg-inj-2}
\forall i, j \in \sV,\; i \neq j,\; i \not \leftrightarrow j \implies X_i(x) \indep X_j(x) \text{ under } P, \quad \forall P \in \Madmg(\G).
\end{equation}
Consider the set of faithful laws 
\[\Gamma_2:=\left\{ P \in \Madmg(\G): \forall i, j \in \sV,\; i \neq j,\; i \leftrightarrow j \implies X_i(x) \centernot{\indep} X_j(x) \text{ under } P \right\}. \]
Take any $P_0 \in \Gamma_1 \cap \Gamma_2 \neq \emptyset$ and observe that $P_0$ cannot be contained by $\Madmg(\G')$ for any other $\G' \in \g$ by \cref{eq:admg-inj-1,eq:admg-inj-2}.

\item This is a direct consequence of (iii) because $\Mdag = \Madmg$ for any DAG. 

\item Consider $\G_1, \G_2 \in \g$, $\G_1 \neq \G_2$. By the completeness of Meek's rules \citep{meek1995causal}, the sets of DAGs represented by $\G_1$ and $\G_2$ must be different. Without loss of generality, suppose there is a DAG $\D_1$ that is represented by $\G_1$ but not by $\G_2$. Fix $x \in \mathcal{X}$. By the proof of (iii), there exists $P' \in \Mdag(\D_1) \subseteq \Mmpdag(\G_1)$ such that 
\[\sW \neq \Pa_{\D_1}(v) \implies P'\left( X_v(x) = X_v(x_{\sW}) \right) < 1, \quad \forall v \in \sV, \; \sW \subseteq \sV.\]
It then follows that $P' \notin \Mdag(\D_2)$ for every DAG $\D_2$ represented by $\G_2$, and hence $P'  \notin \Mmpdag(\G_2)$. We have shown that $\Mmpdag(\G_1) \neq \Mmpdag(\G_2)$.
\end{enumerate}
\end{proof}

\subsection{Graphical characterization of model containment}
\label{sec:proof_ordering_mpdags}

\begin{proof}[Proof of \cref{prop:order-ug}]
The `$\Leftarrow$' direction directly follows from \cref{eq:m-ug}. For the `$\Rightarrow$' direction, we prove by contradiction. Suppose $\Mug(\G_1) \subseteq \Mug(\G_2)$ but $\G_1$ contains an edge $(u,v)$ that is not in $\G_2$. Then, there exists a distribution $P \in \Mug(\G_1)$ under which $X_u$ and $X_v$ are not conditionally independent given $X_{\sV \setminus \{u,v\}}$. However, $P \in \Mug(\G_2)$ contradicts the conditional independence posed by $\Mug(\G_2)$.
\end{proof}

\begin{proof}[Proof of \cref{prop:order-dags}]
By definition of the causal model associated with a DAG (see \cref{sec:causal-models}), if $\D_1$ is a subgraph of $\D_2$, then $\Mdag(\D_1) \subseteq \Mdag(\D_2)$. We now prove the other direction by contradiction. Suppose $\Mdag(\D_1) \subseteq \Mdag(\D_2)$ but $\D_1$ contains an edge $i \rightarrow j$ that is absent from $\D_2$. Recall that $\mathcal{X} = \prod_{v \in \sV} \mathcal{X}_v$ is the sample space.
Let us choose $\mathcal{X}_i = \mathbb{R}$ for every $i \in V$. We can choose a law $P \in \Mdag(\D_1)$ of $X(\cdot)$ such that $\E[X_j(x)] = x_i$ holds for every $x \in \mathcal{X}$. However, under $\Mdag(\D_2)$, $\E[X_j(x)]$ (when it exists) can only depend on $x_{\Pa_{\D_2}(j)}$, which contradicts $i \notin \Pa_{\D_2}(j)$.
\end{proof}

\begin{proof} [Proof of \cref{prop:order-admgs}]
The `if' part follows from the definition of $\Madmg$. We now show the `only if' part by contradiction.
Let us choose $\mathcal{X}_v = \mathbb{R}$ for every $v \in \sV$.
Suppose $\Madmg(\G_1) \subseteq \Madmg(\G_2)$ but there exists an edge between $i$ and $j$ present in $\G_1$ but not in $\G_2$. 
\begin{enumerate}
\item If the edge is $i \to j$, then we have $i \in \Pa_{\G_1}(j)$ and $i \notin \Pa_{\G_2}(j)$. Let us choose a law $P \in \Madmg(\G_1)$ of $X(\cdot)$ such that $\E[X_j(x)] = x_i$ holds for every $x \in \mathcal{X}$. However, this contradicts $P \in \Madmg(\G_2)$ since $\E[X_j(x)]$ can only depend on $\Pa_{\G_2}(j)$ under $\G_2$. 

\item If the edge is $i \leftrightarrow j$, then fix $x \in \mathcal{X}$ and choose a law $P \in \Madmg(\G_1)$ such that $X_i(x) \centernot{\indep} X_j(x)$. However, this contradicts $P \in \Madmg(\G_2)$ because $\G_2$ posits $X_i(x) \indep X_j(x)$. 
\end{enumerate}
\end{proof}

\begin{proof}[Proof of \cref{prop:order-mpdags}]
Recall that $\Mmpdag(\G_i) := \cup_{\D \in [\G_i]} \Mdag(\D)$ for $i=1,2$. It follows from the definition of $\Mmpdag$ and \cref{prop:order-dags} that, if for every $\D_1 \in [\G_1]$, there exists a certain $\D_2 \in [\G_2]$ such that $\D_1$ is a subgraph of $\D_2$, then $\Mmpdag(\G_1) \subseteq \Mmpdag(\G_2)$. We now prove the other direction. Let us choose $\mathcal{X}_i = \mathbb{R}$ for every $i \in V$. Fix any $\D_1 \in [\G_1]$. We want to show that $\D_1$ is a subgraph of some $\D \in [\G_2]$. We can choose a law $P \in \Mdag(\D_1)$ of $X(\cdot)$ such that for every $i \in V$, $x \in \mathcal{X}$, it holds that
\begin{equation} \label{eqs:pf-mpdags}
\E[X_i(x)] = \begin{cases} \beta_i^{\T} x_{\Pa_{\D_1}(i)}, & \quad \Pa_{\D_1}(i) \neq \emptyset \\ 0, &\quad \Pa_{\D_1}(i) = \emptyset \end{cases},
\end{equation}
where $\beta_i \in \mathbb{R}^{|\Pa_{\D_1}(i)|}$ is a fixed vector with non-zero coordinates. Since $P \in \Mdag(\D_1) \subseteq \Mmpdag(\G_1) \subseteq \Mmpdag(\G_2) = \cup_{\D \in \G_2} \Mdag(\D)$, there must exist $\D_2 \in [\G_2]$ such that $P \in \Mdag(\D_2)$. By definition, under $\Mdag(\D_2)$, $\E[X_i(x)]$ (when it exists) can only depend on $x_{\Pa_{\D_2}(i)}$. Hence, \cref{eqs:pf-mpdags} implies that $\Pa_{\D_1}(i) \subseteq \Pa_{\D_2}(i)$ for every $i \in V$. That is, $\D_1$ is a subgraph of $\D_2$. 
\end{proof}

% ==================================================
\section{Structural constraints of graph types} \label{sec:valid-graph}
In this appendix, we describe the conditions for a PDAG to be a valid CPDAG or MPDAG. 
\subsection{CPDAG} \label{sec:valid-cpdag}
For a PDAG $\G$ to be a valid CPDAG, it must also satisfy the following conditions \citep[Theorem 4.1]{andersson1997characterization}.
\begin{enumerate}[(i)]
\item It contains no partially directed cycle.
\item Each connected component spanned by undirected edges is chordal;
\item $v_1 \rightarrow v_2 - v_3$ does not occur as an induced subgraph of $\G$; 
\item Every directed edge $v_1 \rightarrow v_2$ in $\G$ occurs in at least one of following four structures as an induced subgraph of $\G$:
\begin{center}
\includegraphics[width=0.7\linewidth]{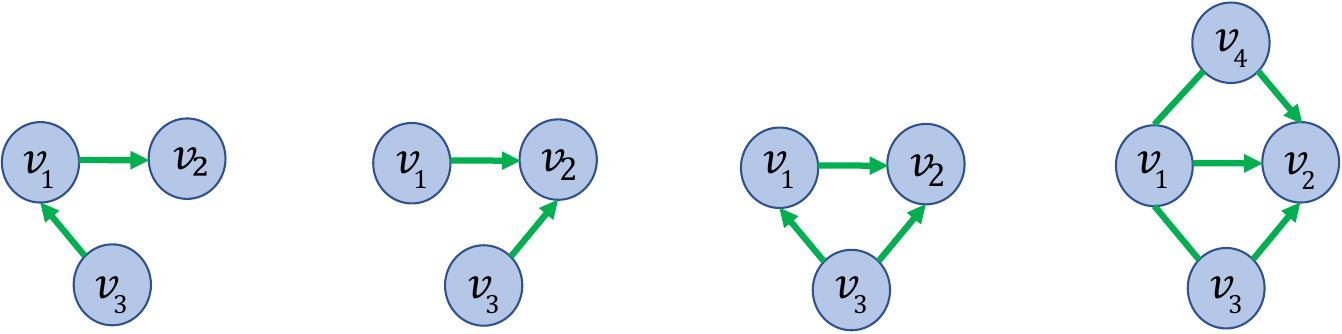}
\end{center}
\end{enumerate}

\subsection{MPDAG} \label{sec:valid-mpdag}
A PDAG $\G$ is a valid MPDAG if the following two conditions \citep{wienobst2021extendability} hold: 
\begin{enumerate}[(i)]
\item For each connected component $\sC $ spanned by undirected edges, the skeleton of $\G_{\sC}$ ($\G_{\sC}$ is the subgraph induced by $\sC$) is chordal;
\item The four structures in \cref{fig:dags}(b) do not occur as induced subgraphs of $\G$. 
\end{enumerate}
To construct an MPDAG from a CPDAG, one first orients certain undirected edges according to background knowledge, and then use the rules due to \citet{meek1995causal} to remove the forbidden structures.

\section{Proofs for generic poset results} \label{sec:proof-poset}
In this appendix, we prove the results in \cref{sec:m_distance_Graphs}.
\subsection{Proof of \cref{prop:graded}}
\label{proof:prop_graded}
Consider any path $p \in L(\G_s,\G_t)$ from $\G_s$ to $\G_t$ where $\G_s \preceq \G_t$. Let $(x_1:= \G_s, x_2,\dots,x_k:= \G_t)$ be the sequence of graphs specifying this path. By definition, for each $i = 2,\dots,k$, either $x_{i}$ covers $x_{i-1}$ or $x_{i-1}$ covers $x_i$. Since the poset is graded, we know that for each step, the rank increases or decreases by one, i.e., $|\rank(x_i)-\rank(x_{i-1})|=1$.  Note, by telescoping sum, we have:
$$\rank(\G_t)-\rank(\G_s) = \sum_{i=2}^k \rank(x_{i})-\rank(x_{i-1})$$
A term in the decomposition above is equal to $1$ when the change from $x_{i-1}$ to $x_i$ is an upward move in the poset ($x_{i-1} \preceq x_i$), and is $-1$ when the change from $x_{i-1}$ to $x_i$ is downward. Hence, 
\begin{eqnarray*}
\begin{aligned}
\rank(\G_t)-\rank(\G_s) &= \# \text{ upward moves} - \# \text{ downward moves} \\
&\leq \# \text{ upward moves} + \# \text{ downward moves} = \mathrm{length}(p).
\end{aligned}
\end{eqnarray*}
We have thus shown that the length of any path from $\G_s$ to $\G_t$ is greater than or equal $\rank(\G_t)-\rank(\G_s)$. Note that the lower-bound can be achieved by considering an upward path from $\G_s$ to $\G_t$ with the sequence $(x_1:= \G_s, x_2,\dots,x_k:= \G_t)$ where $x_{i}$ covers $x_{i-1}$ for every $i=2,3,\dots,k$. Here, the inequality above becomes an equality since $\# \text{ downward moves} = 0$.

\subsection{Proof of \cref{prop:du-up-exist}}
\begin{proof}
It is clear that the existence of a least element guarantees a down-up path between any pair of graphs in $\mathfrak{G}$, since all elements share a common lower bound. Conversely, suppose $\Lp$ does not have a least element. Then there must exist at least two minimal elements $\G_1,\G_2$ ($\G_1 \neq \G_2$), which share no common lower bound. 
Hence, no down-up path can exist between $\G_1$ and $\G_2$. 
The proof for the up-down paths follows similarly. 
\end{proof}

\subsection{Proof of \cref{prop:down_up_up_down_graded}}
\begin{proof}
Any down-up path between $\G_s$ and $\G_t$ moves downward from $\G_s$, and then upward from an inflection point $\G$ to $\G_t$. Since $\G \preceq \G_s$ and $\G \preceq \G_t$, Proposition~\ref{prop:graded} implies that the length of the path is $\rank(\G_s) + \rank(\G_t) - 2\rank(\G)$. Therefore, the shortest down-up path corresponds to an inflection point $\G$ that maximizes $\rank(\G)$ over all $\G \preceq \G_s, \G \preceq \G_t$. The proof for the up-down distance follows similarly.
\end{proof}

\subsection{Proof of \cref{cor:du-semilat}}
\begin{proof}
The result follows from the fact that $\G_s \meet \G_t$ ($\G_s \join \G_t$) has the maximum (minimum) rank among common lower (upper) bounds of $\G_s, \G_t$. 
\end{proof}

\subsection{Proof of \cref{corr:main}}
\begin{proof}
Since the poset is lower semimodular, we have from \cref{thm:semimod} that $d_{\Lp}(\G_s,\G_t) = d_{\Lp,\du}(\G_s,\G_t)$. By construction, $\Lp$ is a meet semi-lattice with $\G_s \wedge \G_t = (\sV, \sE_s^1 \cap \sE_t^1, \dots, \sE_s^k \cap \sE_t^k)$. Appealing to \cref{cor:du-semilat}, we then have $d_{\Lp}(\G_s,\G_t)  = \rank(\G_s)+\rank(\G_t)-2\rank(\G_s \wedge \G_t)=\sum_{i=1}^k \left(|\sE^i_s| + |\sE^i_t| - 2 |\sE^i_s \cap \sE^i_t|\right) =\sum_{i=1}^k |\sE^i_s \triangle \sE^i_t|$. 
\end{proof}

\subsection{Proof of Theorem~\ref{thm:semimod}} \label{proof:semimod}
In what follows, we prove that under \cref{cond:least} the following holds: 
\[ \text{$\Lp$ is lower semimodular} \iff  \text{down-up distance $d_{\Lp,\du}$ equals the model-oriented distance $d_{\Lp}$}.  \]
The case for the up-down distance follows from a dual argument. 

\begin{proof} 
\textbf{`$\Rightarrow$' direction.~} We prove by induction. Consider $\G_s, \G_t \in \g$. When $\G_s = \G_t$, both distances are zero; when $d_{\Lp}(\G_s, \G_t) = 1$, we must have either $\G_s \cover \G_t$ or $\G_s \coverby \G_t$ so $d_{\Lp,\du}(\G_s, \G_t) = 1$. Now, suppose $d_{\Lp,\du}(\G_s, \G_t) = d_{\Lp}(\G_s, \G_t)$ holds for all $\G_s, \G_t$ such that $d_{\Lp}(\G_s, \G_t) \leq k$ for a fixed $k \geq 1$. We argue by contradiction that the equality in distance must also hold for all $\G_s, \G_t$ with $d_{\Lp}(\G_s, \G_t) = k+1$, if any. Suppose this is not true for some $\G_s, \G_t \in \g$. Then, the set of shortest paths between $\G_s$ and $\G_t$ must not contain one of down-up shape. From the set, pick path $p = (\G_s = \G_0, \G_1, \dots, \G_d = \G_t)$ that has the smallest number of up-down segments, and this number is at least one because $p$ cannot be down-up. Let $(\G_a, \G_b, \G_c)$ be the maximal up-down segment that is first encountered when traversing from $\G_s$ to $\G_t$ on $p$. That is, $p$ is of the form
\[ \G_s=\G_0 \cover \cdots \cover \G_a \coverby \cdots \coverby \G_b \cover \cdots \cover \G_c \coverby \cdots \G_d = \G_t, \]
where $0 \leq a < b < c \leq d$. There are two cases. 
\begin{enumerate}[i.]
\item $a=0$ and $c=d$. Path $p$ is up-down (but not a chain). Using lower semimodularity, $p$ can be transformed into a down-up path $p'$ with the same length (see \cref{fig:semimod}) so we have a contradiction. 

\item $a>0$ or $c<d$. The subpath $p(\G_a, \G_c)$ has length smaller or equal to $k$. By our induction hypothesis, the subpath can be replaced by a down-up subpath without altering its length, but this leads to a shortest path between $\G_s, \G_t$ with one less up-down segment. This contradicts the definition of $p$. 
\end{enumerate}
Hence, we must have a contradiction and the result is proven by induction. 

 \textbf{`$\Leftarrow$' direction.~} Consider $\G_s,\G_t, \G \in \g$ such that $\G_s \neq \G_t$ and $\G \cover \G_s, \G_t$. By definition, we have $1 \leq d_{\Lp}(\G_s, \G_t) \leq 2$. 
Further, observe that $\G \cover \G_s, \G_t$ implies $\G_s$ and $\G_t$ are incomparable ($\G \cover \G_s \succ \G_t$ $\implies$ $\G$ cannot cover $\G_t$)  and hence $d_{\Lp}(\G_s,\G_t) > 1$.
Therefore, we have $d_{\Lp}(\G_s, \G_t)=2$. 
By our hypothesis, there exists a down-up path of length 2 between $\G_s$ and $\G_t$. Using again the fact that $\G_s,\G_t$ are incomparable, this path must be of the form $\G_s \cover \G' \coverby \G_t$ for some $\G' \in \g$, which concludes the proof. 
\end{proof}

\section{Posets properties for different graph classes}
This appendix contains proofs and additional results for the graph classes considered in \cref{sec:m_distance_Graphs}.

\subsection{DAGs} \label{app:dags}

\begin{proof}[Proof of \cref{prop:poset-dags}] \hfill
\begin{enumerate}
\item[1, 2 and 3.] They directly follow from \cref{prop:order-dags} and the fact that given an arbitrary $\D \in \g$, every subgraph of $\D$ is also in $\g$.
\setcounter{enumi}{3}

\item For any $\D_1 = (\sV, \sE_1)$ and $\D_2 = (\sV, \sE_2)$ in $\g$, we have $\D_1 \meet \D_2 = (\sV, \sE_1 \cap \sE_2)$.

\item To see lower semimodularity, suppose $\D_3 = (\sV, \sE_3)$ covers two different DAGs $\D_1 = (\sV, \sE_1)$ and $\D_2 = (\sV, \sE_2)$. By 3., we have $\sE_1 \not \subseteq \sE_2$, $\sE_2 \not \subseteq \sE_1$ and $|\sE_1 \triangle \sE_2| = 2$. It follows that $\D_1, \D_2 \cover \D_4:=(\sV, \sE_1 \cap \sE_2)$.
\end{enumerate}
\end{proof}

It is worth mentioning that when $|\sV| > 1$, $\Lp$ is neither a join semi-lattice nor upper semimodular. To see this, consider two DAGs that have an edge in opposite directions. They have no common upper bound by \cref{prop:order-dags}.

\subsection{CPDAGs} \label{app:cpdags}
\subsubsection{Proof of \cref{prop:poset-cpdags}}
\begin{proof}
\begin{enumerate}
\item This directly follows from definition of $\Mcpdag$. 

\item We first show the `$\Rightarrow$' direction. Suppose $\G_1 \coverby \G_2$, which implies $\G_1 \prec \G_2$. By {3.} of \cref{prop:order-cpdags}, there exists a sequence of DAGs $\D_0, \D_1, \dots, \D_k$ ($k \geq 1$) with $\D_0 \in [\G_1]$, $\D_k \in [\G_2]$ and every adjacent pair corresponds to either operation (i) or (ii). Let the subsequence $\D_{j_1}, \D_{j_2}, \dots, \D_{j_l}$ $(1 \leq j_1 < \dots < j_l \leq k)$ be those result from operation (ii). First, observe that $l \geq 1$ because otherwise only operation (i) is involved and $\D_0, \D_k$ must be Markov equivalent, contradicting $\G_1 \prec \G_2$. Then, we argue that $l < 2$. To see this, suppose $l \geq 2$. Let $\G_3$ be the CPDAG that represents the Markov equivalence class of $\D_{j_1}$. It then follows that $\G_1 \prec \G_3 \prec \G_2$, which contradicts $\G_1 \coverby \G_2$. Hence, we conclude that $l=1$. Because operation (i) stays in the Markov equivalence class, we have $\D_{j_1-1} \in [\G_1]$, $\D_{j_1} \in [\G_2]$ and $\D_{j_1-1}$ is a subgraph of $\D_{j_1}$ with exactly one edge fewer. 

\smallskip \noindent We now show the `$\Leftarrow$' direction. For $\G_s = (\sV, \sE_s^{-}, \sE_s^{\rightarrow})$, $\G_t = (\sV, \sE_t^{-}, \sE_t^{\rightarrow})$, suppose $\D_1 \in [\G_s]$, $\D_2 \in [\G_t]$ and $\D_1$ is a subgraph of $\D_2$ with exactly one edge fewer, which implies 
\[(|\sE_t^{-}| + |\sE_t^{\rightarrow}|) - (|\sE_s^{-}| + |\sE_s^{\rightarrow}|) = 1.\]
We want to show $\G_s \coverby \G_t$. By {3.} of \cref{prop:order-cpdags}, we know $\G_s \prec \G_t$. Suppose $\G_s$ is not covered by $\G_t$. Then, the chain can be `saturated' as $\G_s = \G_1 \coverby \G_2 \coverby \dots \coverby \G_k = \G_t$ with $k > 2$. Applying the result in the `$\Rightarrow$' direction above to every adjacent pair in the chain, we have 
\[ (|\sE_t^{-}| + |\sE_t^{\rightarrow}|) - (|\sE_s^{-}| + |\sE_s^{\rightarrow}|) = (k-1), \]
contradicting the previous display. Hence, we have $\G_s \coverby \G_t$. 

\item By {2.}, every covering relation corresponds to increasing the number of edges by one. Further, by {1.}, the least element $\Glst$ has no edge. This fulfills the definition of $\rank(\cdot)$. 

\end{enumerate}
\end{proof}

We now show that the model-oriented poset need not be a meet or join semi-lattice in general. Let $\Lp$ be the poset for CPDAGs over $\sV = \{1,2,3\}$. Consider the graphs
\[ \G_1: 1 - 2 \phantom{-} 3, \quad \G_2: 1 \phantom{-} 2 - 3, \quad \G_3: 1 \rightarrow 2 \leftarrow 3, \quad \G_4: 1 - 2 - 3.\]
By {2.} of \cref{prop:poset-cpdags}, we have $\G_1, \G_2 \coverby \G_3, \G_4$. To see that $\Lp$ is not a join semi-lattice, observe that the upper bounds on $\G_1$ and $\G_2$ consist of $\G_3,\G_4$ and the full graph $\Ggr$, where $\G_3, \G_4 \prec \Ggr$ but $\G_3
$ and $\G_4$ are incomparable. Similarly, to see that $\Lp$ is not a meet semi-lattice, observe that the lower bounds on $\G_3$ and $\G_4$ consists of $\G_1$, $\G_2$ and the empty graph $\Glst$. However, while $\G_1, \G_2 \succ \Glst$, $\G_1$ and $\G_2$ are incomparable. 

Further, we show that the poset need not be upper or lower semimodular. Specifically, let $\Lp'$ be the model-oriented poset for CPDAGs over $\sV' = \{1,2,3,4\}$. For upper semimodularity, consider graphs
\[ \G_1': \begin{array}{ccc}
1 & \leftarrow & 2 \\
 \uparrow &    & |  \\
4 & -           & 3   
\end{array}, \qquad \G_2': \begin{array}{ccc}
1 & \rightarrow & 2 \\
| &             & \uparrow  \\
4 &     -       & 3   
\end{array}, \qquad \G_3': \begin{array}{ccc}
1 &  & 2 \\
|  &   & |  \\
4 & -  & 3   
\end{array}.\]
By {2.} of \cref{prop:poset-cpdags}, we know $\G_1', \G_2' \cover \G_3'$. If $\Lp'$ is upper semimodular, then there must exist a graph that covers both $\G_1'$ and $\G_2'$. Suppose $\G'_4 \cover \G_1', \G_2'$. By the same proposition, the \emph{skeleton} of $\G'_4$ must be either that of $\G_5'$ or that of $\G_6'$:
\[ \G_5': \begin{array}{ccc}
1 & - & 2 \\
|  & \diagdown   & |  \\
4 &      -      & 3   
\end{array}, \qquad \G_6': \begin{array}{ccc}
1 & - & 2 \\
|  & \diagup   & |  \\
4 &      -      & 3   
\end{array}.\]
Observe that $\G_4'$ cannot have a v-structure at $1$ because if so, then every DAG in $[\G_4']$ has $1 \leftarrow 2$ and thus cannot be a supergraph of $\G_2'$. By the same logic, we can see that $\G_4'$ has no v-structure. Consequently, $\G_4'$ is identical to either $\G_5'$ or $\G_6'$. However, by model containment, $\G_4'$ must satisfy both $1 \not \indep 3 \mid 2,4$ (holds in $\G_2'$) and $2 \not \indep 4 \mid 1,3$ (holds in $\G_1'$). One can check that these two conditions cannot simultaneously hold under $\G_5'$ or $\G_6'$.

Now we argue that $\Lp'$ is not lower semimodular. For this purpose, consider again $\G_5'$ and $\G_6'$ above.
\cref{prop:poset-cpdags} implies that the full graph covers $\G_5'$ and $\G_6'$. If $\Lp$ is lower semimodular, then there must exist $\G_7'$ that is covered by both $\G_5'$ and $\G_6'$. By the same result, the skeleton of $\G_7'$ is a 4-cycle and hence $\G_7'$ must be either 
\[ \begin{array}{ccc}
\circ & \rightarrow & \circ \\
\downarrow  &   & \uparrow  \\
\circ &      \leftarrow      & \circ   
\end{array} \quad \text{or} \quad \begin{array}{ccc}
\circ & \rightarrow & \circ \\
|  &   & \uparrow  \\
\circ &      -      & \circ   
\end{array}\]
up to graph isomorphism. One can check that neither can be covered by both $\G_5'$ and $\G_6'$. 

\subsubsection{A zigzag example} \label{app:cpdag_ex}
\cref{fig:zigzag} gives an example where the shortest path between two CPDAGs is a zigzag, neither up-down nor down-up. 
\begin{figure}[!ht]
    \includegraphics[width=0.6\columnwidth]{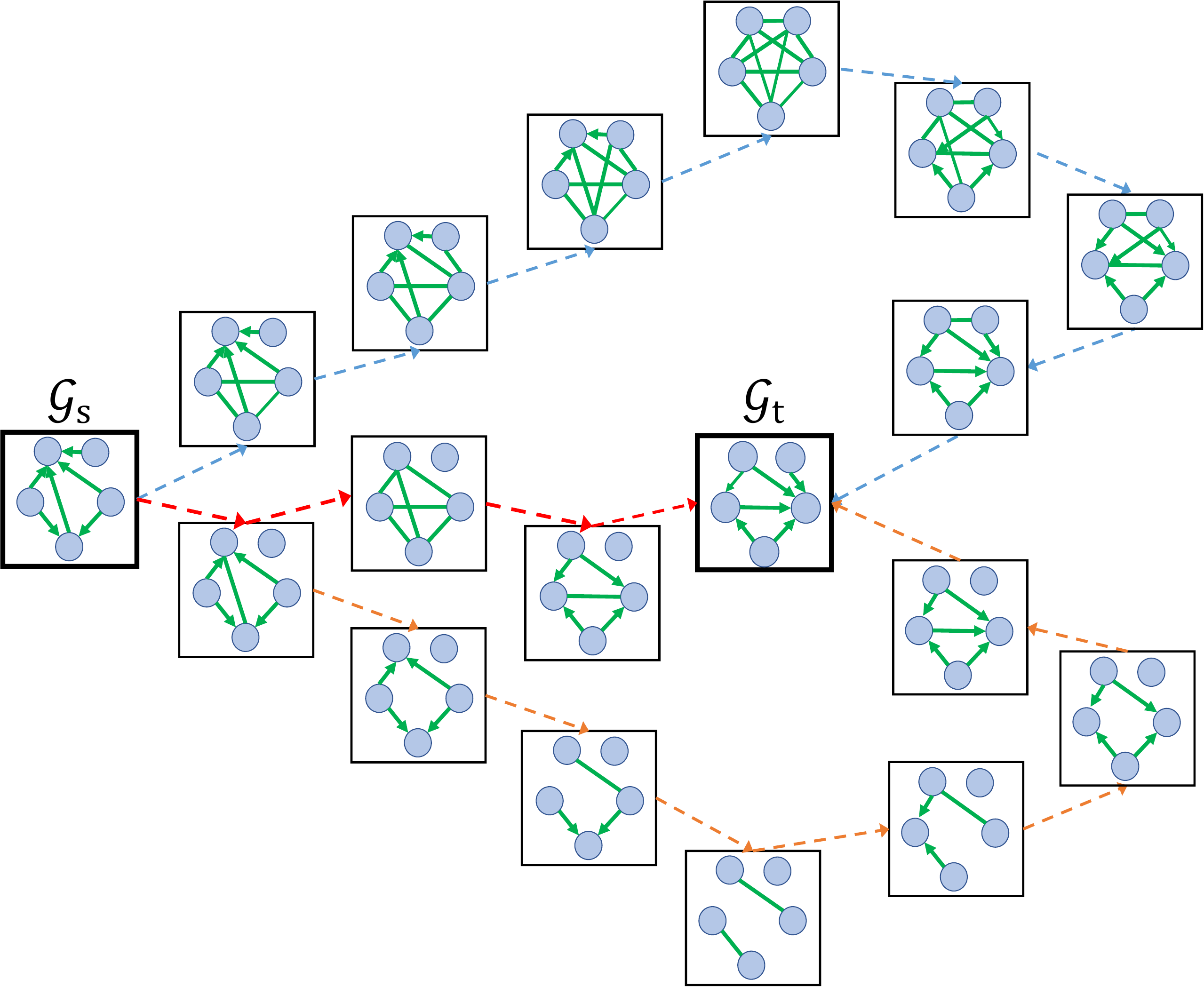}
    \caption{The model-oriented distance between probabilistic CPDAGs $\G_s$ and $\G_t$ is 4, as shown by the shortest path drawn in red. Meanwhile, the up-down (blue path) and the down-up (orange path) distances are both 8.}
    \label{fig:zigzag}
\end{figure}

\subsubsection{Lower and upper bounds on the model-oriented distance}
\label{app:upper_lower}
To formulate the lower bound, we need some notation. 
For a PDAG $\G$, we use $\sk(\G)$ to denote its \emph{skeleton}, which is the set of edges upon undirecting all the edges in $\G$. 
We also use $\vstr(\G)$ to denote the set of triplets $(a,b,c)$ that form a v-structure at $b$ in $\G$ (see \cref{sec:graph_background} for its definition); here we do not distinguish between $(a,b,c)$ and $(c,b,a)$. 
Let us fix two CPDAGs $\G_s$ and $\G_t$ over a vertex set $\sV$. Define the set 
\begin{multline*}
\sS(\G_s,\G_t) := \Big\{(a,b,c) \subseteq \sV \times \sV \times \sV: (a,b),(b,c) \in \sk(\G_s) \cap \sk(\G_t),\\(a,c) \not \in \sk(\G_s) \cup \sk(\G_t), \text{ and } (a,b,c) \in \vstr(\G_s) \triangle \vstr(\G_t) \Big\},
\end{multline*}
which consists of v-structures that appear in only one of the two graphs. 
We \emph{partition} $\sS$ as
\[\sS(\G_s,\G_t) = \sS_1 \cup \dots \cup \sS_L, \quad L \geq 1, \] 
where two triplets $(a,b,c)$ and $(d,e,f)$ are placed in the same partition if they share any common vertex (e.g., $a=d$ or $a=e$). 

For a pair of vertices $(i,j)$, we define operations $\OP_{(i,j),\text{CE}}$ (cover edge) and $\OP_{(i,j),\text{ER}}$ (edge removal) that map a subset of $\sS(\G_s,\G_t)$ to a subset of $\sS(\G_s,\G_t)$ as follows: 
for $\sT \subseteq \sS(\G_s,\G_t)$, 
\[\OP_{(i,j),\text{CE}}(\sT) := \sT \setminus \{(i,k,j): k \in \sV\},\quad
\OP_{(i,j),\text{ER}}(\sT) := \sT \setminus \{(i,j,k),(j,i,k): k \in \sV\}. \]
Here, $\OP_{(i,j),\text{CE}}$ removes v-structures of the form $i \rightarrow k \leftarrow j$ by adding a \emph{covering edge} (see \cref{prop:order-cpdags}) between $i$ and $j$; $\OP_{(i,j),\text{ER}}$ removes v-structures of the form $i \rightarrow j \leftarrow k$ or $j \rightarrow i \leftarrow k$ by removing the edge between $i$ and $j$.
For each partition $\sS_l$, consider the minimum number of operations taken to eliminate all the v-structures in the partition, namely
\begin{equation} \label{eqs:mop}
m_{\text{op}}(\sS_l) := \min \left\{m \in \mathbb{N}: \exists~\mathrm{op}_1,\dots,\mathrm{op}_m,\; \mathrm{op}_1 \circ \dots \circ \mathrm{op}_m(\sS_l) = \emptyset \right\}, \quad l=1,\dots,L,
\end{equation}
where each $\mathrm{op}_k$ ($1\leq k \leq m$) is either $\OP_{(i,j),\text{CE}}$ or $\OP_{(i,j),\text{ER}}$ for some $(i,j)$ that can depend on $k$. Because the v-structures from two different sets of the partition have no common vertex by construction and this property is maintained by both operations, we conclude that 
\[ m_{\text{op}}(\sS(\G_s,\G_t)) = \sum_{l=1}^L m_{\text{op}}(\sS_l), \]
which lower bounds the number of steps taken to align the v-structures between $\G_s$ and $\G_t$. 

\begin{proposition}[CPDAG $d_{\Lp}$ bounds]\label{prop:cpdags}
For CPDAGs $\G_s,\G_t \in \g$, we have upper bound
\[d_{\Lp}(\G_s,\G_t) \leq \min\left\{d_{\Lp,\du}(\G_s,\G_t), d_{\Lp,\ud}(\G_s,\G_t) \right\},\]
where $d_{\Lp,\du}, d_{\Lp,\ud}$ can be computed using \cref{prop:down_up_up_down_graded}.
Meanwhile, we have lower bound
\[d_{\Lp}(\G_s,\G_t) \geq |\sk(\G_s) \triangle \sk(\G_t)| + 2 \, m_{\mathrm{op}}(\sS(\G_s,\G_t)). \]
\end{proposition}

The upper bound follows from the definition of $d_{\Lp}$ and the fact that both the down-up and up-down distances are well-defined for CPDAGs, as guaranteed by the existence of the least and the greatest element in its poset.

\begin{proof}[Proof of the lower bound in \cref{prop:cpdags}] 

Any path from $\G_s$ to $\G_t$ must involve removing edges that are in the skeleton of $\G_s$ and not in the skeleton of $\G_t$ and adding edges that are in the skeleton of $\G_t$ but not in the skeleton of $\G_s$. Since every edge addition or deletion increases the path length by one, we have that $d_{\Lp}(\G_s,\G_t) \geq |\sk(\G_s) \triangle \sk(\G_t)|$. 

The lower bound above does not account for v-structures that are present in one but not in the other. Specifically, suppose $\G_s$ has a v-structure $i \rightarrow j \leftarrow k$ with $i$ and $k$ not connected, and $\G_t$ does not have this v-structure, but has an edge between $i$ and $j$, an edge between $j$ and $k$, and no edge between $i$ and $k$. Thus, any path from $\G_s$ to $\G_t$ must remove this v-structure, which can only be done if and only if at some step at least one of the following happens: (i) the edge between $i$ and $j$ and/or between $j$ and $k$ is removed (this is the operation $\OP_{(i,j),\text{ER}}$); or (ii) there is a covering edge that is added between $i$ and $k$ (this is the operation $\OP_{(i,j),\text{CE}}$). For both (i) and (ii), an extra operation afterward is needed: for (i), it is to add back the edge; and for (ii), it is to remove the covered edge. Similar argument can be made if $\G_t$ has a v-structure that $\G_s$ does not.

The set $\sS(\G_s,\G_t)$ precisely defines the set of relevant triplets described in the previous paragraph.  Let $\mathrm{OP} := \{\OP_{(i,j),\text{CE}},\OP_{(i,j),\text{ER}} \forall i,j \in \mathrm{V}\}$. Our goal then becomes finding the smallest number of operations, from the set of operations $\mathrm{OP}$ such that, when applied consecutively to $S(\G_s,\G_t)$, the resulting set is empty. In other words, we obtain that:
$$d_{\Lp}(\G_s,\G_t) \geq |\mathrm{sk}(\G_s)\setminus \mathrm{sk}(\G_t)|+|\mathrm{sk}(\G_t)\setminus \mathrm{sk}(\G_s)|+2m_{\text{op}}(\sS(\G_s,\G_t)).$$
where 
$$m_{\mathrm{op}}(\sS):= \min\{m \in \mathbb{N}: \exists \mathrm{op}_1,\dots,\mathrm{op}_{m} \text{ such that }\mathrm{op}_1 \circ \dots \circ \mathrm{op}_m(\sS)=\emptyset\}.$$
Consider the partition $\sS(\G_s,\G_t) = \sS_1 \cup \dots \sS_L$. Since triplets in distinct partitions do not share any common vertices, we have that:
$$m_{\mathrm{op}}(\sS(\G_s,\G_t)) = \sum_{\ell=1}^L m_{\mathrm{op}}(\sS_{\ell}).$$
\end{proof}

\subsubsection{Additional results} \label{sec:algorithm_theory}
\begin{lemma} For the class of CPDAGs, and for any pair of CPDAGs $\G_1$ and $\G_2$, we have:
$$\max_{\G \preceq \G_1, \G \preceq \G_2}\rank(\G) \leq |\mathrm{sk}(\G_1)\cap \mathrm{sk}(\G_2)|.$$
\label{lemma:cpdag_rho}
\end{lemma}
\begin{proof}[Proof of Lemma~\ref{lemma:cpdag_rho}]
Notice that for any $\G \preceq \G_1$, we have that $\mathrm{sk}(\G)\subseteq \mathrm{sk}(\G_1)$; otherwise, conditional dependencies of $\G$ are not a subset of those of $\G_1$. Similarly, we have that for any $\G \preceq \G_2$, $\mathrm{sk}(\G)\subseteq \mathrm{sk}(\G_2)$. This allows us to conclude that for $\G$ with $\G \preceq \G_1$ and $\G \preceq \G_2$, $\mathrm{sk}(\G)\subseteq \mathrm{sk}(\G_1) \cap \mathrm{sk}(\G_2)$. Since $\rank(\G) = |\mathrm{sk}(\G)|$, we have the desired result. 
\end{proof}

\begin{lemma} For the class of CPDAGs, and for any pair of CPDAGs $\G_1$ and $\G_2$, we have:
$$\min_{\G_1 \preceq \G, \G_2 \preceq \G}\rank(\G) \geq |\mathrm{sk}(\G_1)\cup \mathrm{sk}(\G_2)|.$$
\label{lemma:cpdag_rho2}
\end{lemma}
\begin{proof}[Proof of Lemma~\ref{lemma:cpdag_rho2}]
Based on the same argument as proof of Lemma~\ref{lemma:cpdag_rho}, we have that for 
$\G$ with $\G_1 \preceq \G$ and $\G_2 \preceq \G$, $\mathrm{sk}(\G_1) \cup \mathrm{sk}(\G_2) \subseteq \mathrm{sk}(\G)$. Since $\rank(\G) = |\mathrm{sk}(\G)|$, we have the desired result. 
\end{proof}

\subsubsection{Polytree CPDAGs} \label{sec:apdx-polytree-cpdag}

\begin{proof}[Proof of \cref{thm:poset-polytree-cpdags}] \hfill
\begin{enumerate}
\item This directly follows from definition. 

\item The `if' part follows from the definition of $\Mcpdag$ and now we show the `only if' part. 
First, note that $\sk(\G_1) \subseteq \sk(\G_2)$; otherwise conditional dependencies of $\G_1$ are not a subset of those of $\G_2$. 
Further, if $\G_1$ has a v-structure $i \rightarrow j \leftarrow k$, then $\G_2$ must also have this v-structure. 
Supposing not, then, since $\G_2$ cannot have cycles and its skeleton is a superset of $\G_1$, then $\G_2$ must have an edge between $i$ to $j$ and $j$ to $k$ without an edge between $i$ to $k$. 
This leads to a contradiction, however, since $\G_1$ encodes the conditional dependencies $X_i \not\indep X_j \mid X_{\sS}$ for every set $\sS$ that contains $k$, while $\G_2$ does not encode these dependencies. 
Similarly, if $\G_2$ has a v-structure $i \rightarrow j \leftarrow k$, then either $\G_1$ has the same v-structure or the edge $i$ to $j$ or $j$ to $k$ is missing in $\G_1$. 
Suppose not, that is $i$ and $j$, and $j$ and $k$ are connected but not in a v-structure form. 
Then, $\G_1$ implies the conditional dependency $X_i \not\indep X_j \mid \sS$ for every $\sS$ that does not contain $k$. 
However, $\G_2$ implies the conditional independence $X_i \indep X_j \mid X_{\sS}$ for some $\sS$ that does not contain $k$. 

Using the facts above, we conclude the desired result. 
Let $\sF = \{(i,j)\}$ be collection of vertices that are connected in $\G_2$ but not in $\G_1$. 
By the analysis above, any v-structure in $\G_1$ is also present in $\G_2$. Furthermore, every v-structure $i \rightarrow k \leftarrow j$  in $\G_2$ is either present in $\G_1$ or at least one of the edges between pair of vertices $(i,k)$ and $(j,k)$ is missing. 
Consider any DAG $\mathcal{D}_2$ represented by $\G_2$. Let $\mathcal{D}_1$ be the DAG that is obtained by deleting the edges in $\mathcal{D}_2$ among pairs of vertices $\sF$. 
By construction, $\mathcal{D}_1$ has the same skeleton as any DAG in $\G_1$. Since $\mathcal{D}_1$ is a polytree, every structure present in $\mathcal{D}_1$ or in $\G_1$ are also in $\G_2$. 
Suppose as a point of contradiction that there exists a v-structure $i \rightarrow k \leftarrow j$ that is in $\mathcal{D}_1$ and not in $\G_1$. 
Since this v-structure must also be present in $\G_2$, and from the previous statement, then one of the edges between pairs of vertices $(i,k)$ or $(j,k)$ must be missing in $\G_1$, which contradicts $\mathcal{D}_1$ and $\G_1$ have the same skeleton. 
A similar argument allows us to conclude that it is not possible that $\G_1$ has a v-structure that is not present in $\mathcal{D}_1$. 
Thus, $\mathcal{D}_1$ and $\G_1$ have the same skeleton and v-structures. 
We conclude that $\mathcal{D}_1$ must be a DAG represented by $\G_1$. 

\item Let $\rank(\cdot)$ be the number of edges. 
It suffices to show that $\rank(\G)+1 = \rank(\G')$ holds for every $\G \coverby \G'$ in $\Lp_{\poly}$. 
By the previous result, $\G \coverby \G'$ implies $\rank(\G') \geq \rank(\G) + 1$. 
For contradiction, suppose there exists $\G_1 \coverby \G_2$ with $\rank(\G_1)+1 < \rank(\G_2)$. 
By the previous result, for every DAG $\mathcal{D}_2 = (\mathrm{V},\sE_2)$ represented by $\G_2$, there exists a DAG $\mathcal{D}_1 = (\mathrm{V},\sE_1)$ represented by $\G_1$ such that $\sE_1 \subseteq \sE_2$. 
Since $\rank(\G_1)+1 < \rank(\G_2)$, there exists $\sE_3$, satisfying $\sE_1 \subset \sE_3 \subset \sE_2$. 
Let $\G_3$ be the CPDAG obtained by completing the DAG $\mathcal{D}_3 = (\mathrm{V},\sE_3)$. 
By construction, we have $\G_1 \preceq \G_3 \preceq \G_2$ with $\G_3$ being distinct from $\G_1$ and $\G_2$, contradicting $\G_1 \coverby \G_2$. 

\item Consider any $\G_1,\G_2,\G_3 \in \g_{\poly}$ with $\G_1 \coverby \G_3$ and $\G_2 \coverby \G_3$. We want to show that there is a graph that is covered by both $\G_1$ and $\G_2$. 
Consider any DAG $\mathcal{D}_3=  (\sV,\sE_3) \in [\G_3]$. 
By Result 2., there exist DAGs $\mathcal{D}_1 = (\sV,\sE_1) \in [\G_1]$ and $\mathcal{D}_2 = (\sV,\sE_2) \in [\G_2]$ with $\sE_1 \subseteq \sE_3$ and $\sE_2 \subseteq \sE_3$.
Applying Result 3., we have $\sE_3\setminus \sE_1 = e_1$ and $\sE_3 \setminus \sE_2=e_2$ for two separate edges $e_1, e_2$. 
Consider DAG $\tilde{\mathcal{D}} := (\sV, \sE_1 \cap \sE_2)$ and let $\tilde{\G}$ be the CPDAG such that $\tilde{\mathcal{D}} \in [\tilde{\G}]$. 
By construction, $\tilde{\G}$ is a polytree CPDAG that satisfies $\tilde{\G} \preceq \G_1$ and $\tilde{\G} \preceq \G_2$. 
Since $\rank(\tilde{\G}) = \rank(\G_1)-1=\rank(\G_2)-1$, we have $\tilde{\G} \coverby \G_1$ and $\tilde{\G} \coverby \G_2$. 

\item This follows from the previous result and \cref{thm:semimod}. 
\end{enumerate}
\end{proof}

\subsection{MPDAGs} \label{app:mpdags}
\subsubsection{Proofs for the MPDAG poset}
We first show that the model-oriented poset underlying MPDAGs is not a meet or join semi-lattice in general. 
Specifically, let $\Lp$ be the model-oriented poset underlying MPDAGs over $\sV=\{1,2,3\}$. Consider the following four MPDAGs:
\[ \G_1: \begin{array}{ccc}
1 & \rightarrow & 2 \\
  &    & \uparrow \\
 &           & 3   
\end{array}, \qquad \G_2:\begin{array}{ccc}
1 & - & 2 \\
  &    & |\\
 &           & 3   
\end{array}, \qquad \G_3:\begin{array}{ccc}
1 & \rightarrow & 2 \\
  &    & \\
 &           & 3   
\end{array} \qquad \G_4:\begin{array}{ccc}
1 & \phantom{-} & 2 \\
  &    & \uparrow\\
 &           & 3   
\end{array},\]
which satisfy $\G_3, \G_4 \prec \G_1, \G_2$. The upper bounds of $\G_3$ and  $\G_4$ consist of $\G_1, \G_2$ and $\Ggr$, where $\G_1$ and $\G_2$ are incomparable --- this shows that $\Lp$ is not a join semi-lattice. Similarly, the lower bounds of $\G_1$ and $\G_2$ consist of $\G_3, \G_4$ and $\Glst$, where $\G_3, \G_4$ are incomparable --- this shows that $\Lp$ is not a meet semi-lattice.

Then, we show that $\Lp$ is neither lower nor upper semimodular. Consider graphs 
\[ \G_1': \begin{array}{ccc}
1 & \rightarrow & 2 \\
  &    & \downarrow\\
 &           & 3   
\end{array}, \qquad \G_2':\begin{array}{ccc}
1 &  & 2 \\
  &  \nwarrow  & \downarrow\\
 &           & 3   
\end{array}, \qquad \G_3':\begin{array}{ccc}
1 &\phantom{-} & 2 \\
  &    & \downarrow\\
 &           & 3   
\end{array},\]
which satisfy $\G_3' \coverby \G_1', \G_2'$. However, there is no graph that covers both  $\G_1'$ and $\G_2'$ due to the acyclicity constraint, establishing that $\Lp$ is not upper semimodular. Further, consider the following graphs: 
\[ \G_1'': \begin{array}{ccc}
1 & - & 2 \\
  &   & |\\
4 &           & 3   
\end{array}, \qquad \G_2'':\begin{array}{ccc}
1 &{-} & 2 \\
\uparrow  &  \nearrow &\downarrow \\
4 &  \rightarrow         & 3   
\end{array}, \qquad \G_3'':\begin{array}{ccc}
1 &{-} & 2 \\
\uparrow  &  \nearrow &| \\
4 &  \rightarrow         & 3   
\end{array}.\]
By construction, $\G_1''$ and $\G_2''$ are both covered by $\G_3''$ (see \cref{sec:mpdags}). 
However, there exists no graph covered by both $\G_1''$ and $\G_2''$. To see why, the only two graphs that are covered by $\G_1''$ are $\G_1'''$ and $\G_2'''$ below:
\[ \G_1''': \begin{array}{ccc}
1 & \leftarrow & 2 \\
  &   & |\\
4 &           & 3   
\end{array},  \qquad \G_2''':\begin{array}{ccc}
1 &{-} & 2 \\
  &   &\downarrow \\
4 &          & 3   
\end{array}, \qquad \G : \begin{array}{ccc}
1 & - & 2 \\
 \uparrow & \nearrow   & \downarrow\\
4 &           & 3   
\end{array}.\]
Note that $\G_1'''$ does not satisfy $\G_1'''\preceq \G_2''$. Moreover, there exists the graph $\G$ which is intermediate between $\G_2'''$ and $\G_2''$. Hence, $\Lp$ is not lower semimodular. 

 Finally, we show that the MPDAG poset need not be graded.

\begin{proof}[Proof of \cref{prop:not_graded_mpdags}]
To prove that the poset $\Lp$ underlying MPDAGs is not graded, it suffices to find two maximal chains between a pair of graphs $\G_s$ and $\G_t$ of unequal lengths. 
We first show this under $|\sV|=4$.
\cref{fig:mpdag_not_graded_1} presents a sequence of graphs that specifies a maximal chain from $\G_s$ to $\G_t$. 
This chain has length $3$. 
Meanwhile, \cref{fig:mpdag_not_graded_2} presents another maximal chain from $\G_s$ to $\G_t$ that has length 5. 
This shows that $\Lp$ is not graded. When $|\sV| > 4$, the same argument works by adding in the remaining vertices as isolated vertices with degree zero. 
\end{proof}

\begin{figure}[!ht]
  \subfloat[$\G_1:=\G_s$]{\includegraphics[width=.2\columnwidth, angle = 0]{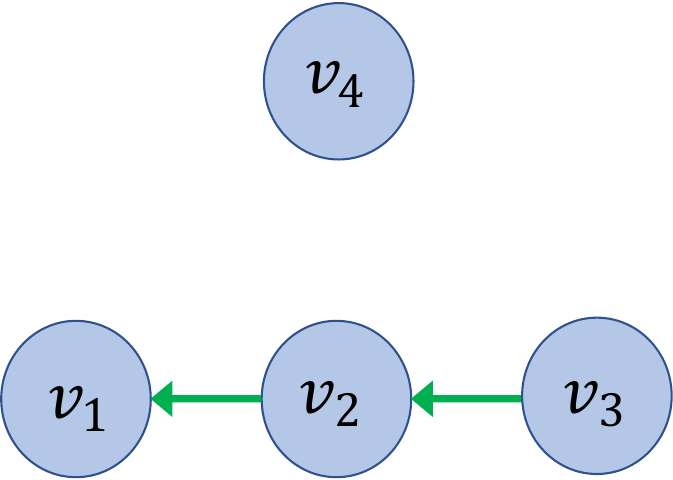}} \hspace{0.3in}\  \subfloat[$\G_2$]{\includegraphics[width=.2\columnwidth, angle = 0]{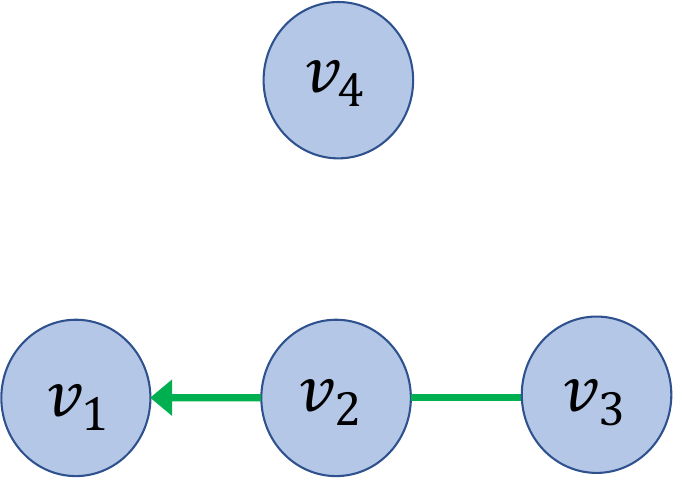}} \hspace{0.3in} 
        \subfloat[$\G_3$]{\includegraphics[width=.2\columnwidth, angle = 0]{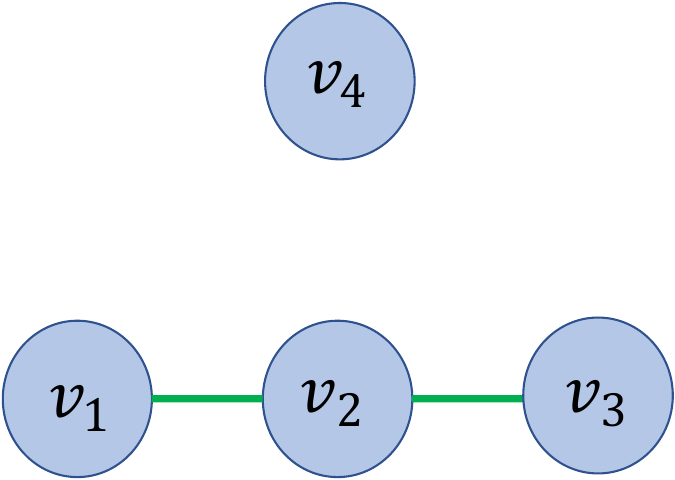}}\hspace{0.3in}  
        \subfloat[$\G_4:=\G_t$]{\includegraphics[width=.2\columnwidth, angle = 0]{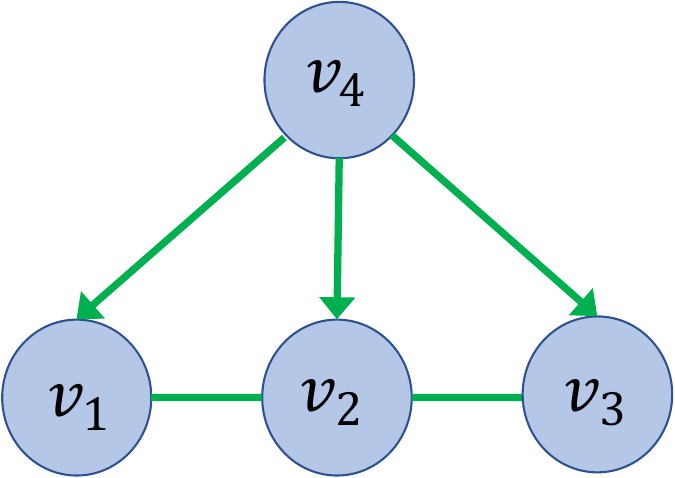}}
\caption{A maximal chain between MPDAGs $\G_s$ and $\G_t$, where each graph covers its predecessor. To see $\G_3 \coverby \G_{4}$, observe that (i) $\G_{3} \prec \G_{4}$ and (ii) removing any directed edge from $\G_{4}$ introduces a structure that is forbidden from any MPDAG (see \cref{fig:dags}), so no graph sits between $\G_{3}$ and $\G_{4}$. }
\label{fig:mpdag_not_graded_1}
\end{figure}

\begin{figure}[!ht]
  \subfloat[$\G_1:=\G_s$]{\includegraphics[width=.2\columnwidth, angle = 0]{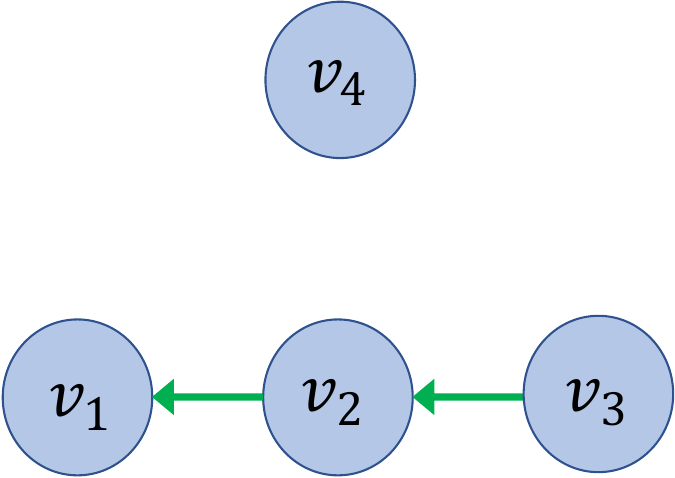}} \hspace{0.3in}\  \subfloat[$\G_2$]{\includegraphics[width=.2\columnwidth, angle = 0]{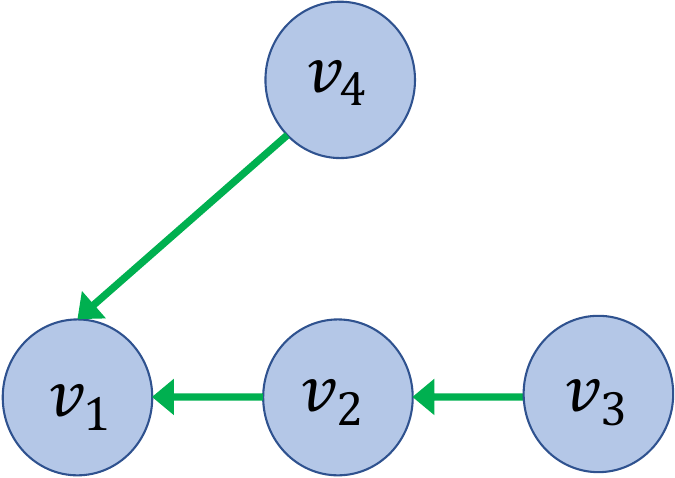}} \hspace{0.3in} 
        \subfloat[$\G_3$]{\includegraphics[width=.2\columnwidth, angle = 0]{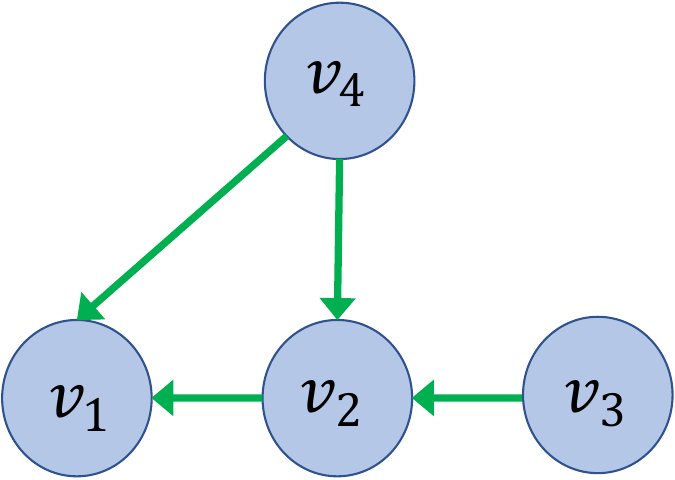}}\hspace{0.3in}  
        \subfloat[$\G_4$]{\includegraphics[width=.2\columnwidth, angle = 0]{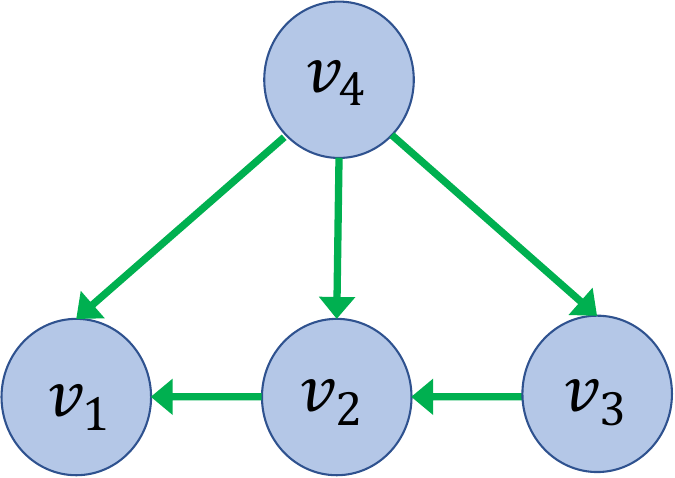}}\hspace{0.3in}
             \subfloat[$\G_5$]{\includegraphics[width=.2\columnwidth, angle = 0]{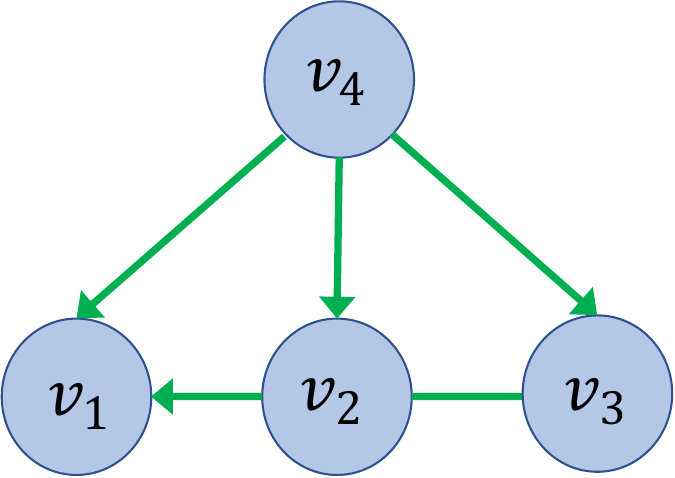}}\hspace{0.3in}
             \subfloat[$\G_6:=\G_t$]{\includegraphics[width=.2\columnwidth, angle = 0]{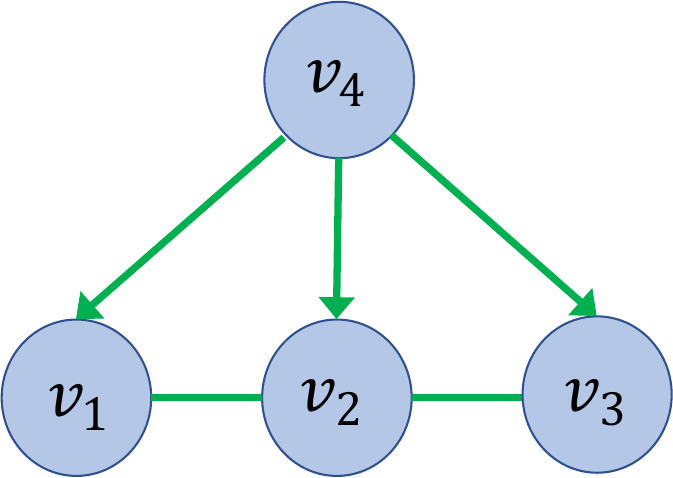}}
\caption{A maximal chain between MPDAGs $\G_s$ and $\G_t$, where each graph covers its predecessor.}
\label{fig:mpdag_not_graded_2}
\end{figure}

\subsubsection{Proof of \cref{prop:pseudo-covering}}
\label{proof:mpdag_pseudo}

\begin{proof}[Proof of \cref{prop:pseudo-covering}]
Given $\G_1 \preceq \G_2$, from \cref{prop:order-mpdags} it follows that every $v_1 \rightarrow v_2$ present in $\G_1$ corresponds to either $v_1 \rightarrow v_2$ or $v_1-v_2$ in $\G_2$, and every $v_3-v_4$ present in $\G_1$ corresponds to $v_3-v_4$ in $\G_2$. The first statement then follows from definition.

Further, suppose $\pseudorank(\G_1)+1=\pseudorank(\G_2)$ but there exists $\G$ that satisfies $\G_1 \prec \G \prec \G_2$. The skeletons of $\G_1$ and $\G_2$ are either identical or different by exactly one edge, which we discuss in three cases: (i) If their skeletons coincide, then $\G_2$ must have exactly one undirected edge that is directed in $\G_1$, so the intermediate $\G$ cannot exist. 
(ii) If $\sk(\G_1) = \sk(\G)$ has one fewer edge than $\sk(\G_2)$, then the extra edge in $\G_2$ must be directed and all the other edges in $\G_2$ are identical to those in $\G_1$. Because $\G_1 \prec \G$, $\G$ must have an undirected edge that corresponds to a directed edge in $\G_1$ and hence in $\G_2$, but this contradicts $\G \prec \G_2$. (iii) Similarly, we also have contradiction if $\sk(\G) = \sk(\G_2)$ has one more edge than $\sk(\G_1)$. Therefore, by ruling out the intermediate $\G$, we conclude $\G_1 \coverby \G_2$. 
\end{proof}

\subsubsection{Proof of \cref{prop:mpdag_upper}}
\label{proof:mpdag_upper}

We will use the following terminology in this subsection. 
A vertex set is called a \emph{clique} if every pair of vertices in the set are adjacent.
A vertex $v$ in $\G$ is called \emph{simplicial} if $\Nb_{\G}(v)$ is a clique. 
The topological order is a partial order on the set of vertices: for two vertices $u$ and $v$, we say $u \preceq v$ if $u \rightarrow \dots \rightarrow v$. 
Given a vertex set $\sS$ that contains a vertex $u$, we say $u$ is topologically minimal in $\sS$ if there exists no vertex $v$ in $\sS$ such that $v \prec u$. 
We first prove two supporting lemmas.

\begin{lemma} \label{lem:simplicial}
Let $\H$ be an MPDAG where every pair of vertices are connected by an undirected path. Suppose $u$ is a simplicial vertex and $u$ has a child $v$ with $u - s - v$ in $\H$. Then $v$ is also simplicial. 
\end{lemma}

\begin{figure}[htb]
\begin{tikzpicture}
\tikzset{node distance=0.8cm, >=stealth}
\begin{scope} %---------
\node[rv] (s) {$s$};
\node[rv, right=of s] (u) {$u$};
\node[rv, below=of u] (v) {$v$};
\node[rv, below=of s] (w) {$w$};
\draw[-, thick] (s) -- (u);
\draw[-, thick] (s) -- (v);
\draw[->, thick] (u) -- (v);
\draw[-, thick] (w) -- (v);
\node[below=1mm of w, xshift=6mm] (l) {(I)};
\end{scope} %---------
\begin{scope}[xshift=3cm] %---------
\node[rv] (s) {$s$};
\node[rv, right=of s] (u) {$u$};
\node[rv, below=of u] (v) {$v$};
\node[rv, below=of s] (w) {$w$};
\draw[-, thick] (s) -- (u);
\draw[-, thick] (s) -- (v);
\draw[->, thick] (u) -- (v);
\draw[->, thick] (w) -- (v);
\draw[->, thick] (w) -- (s);
\node[below=1mm of w, xshift=6mm] (l) {(II)};
\end{scope}
\begin{scope}[xshift=6cm] %---------
\node[rv] (s) {$s$};
\node[rv, right=of s] (u) {$u$};
\node[rv, below=of u] (v) {$v$};
\node[rv, below=of s] (w) {$w$};
\draw[-, thick] (s) -- (u);
\draw[-, thick] (s) -- (v);
\draw[->, thick] (u) -- (v);
\draw[->, thick] (v) -- (w);
\draw[-, thick] (w) -- (s);
\node[below=1mm of w, xshift=6mm] (l) {(III)(i)};
\end{scope}
\begin{scope}[xshift=9cm] %---------
\node[rv] (s) {$s$};
\node[rv, right=of s] (u) {$u$};
\node[rv, below=of u] (v) {$v$};
\node[rv, below=of s] (w) {$w$};
\node[rv, inner sep=0pt, left=of w, xshift=5mm, yshift=7mm] (w1) {$w_1$};
\draw[-, thick] (s) -- (u);
\draw[-, thick] (s) -- (v);
\draw[->, thick] (u) -- (v);
\draw[->, thick] (v) -- (w);
\draw[-, thick] (w) -- (w1);
\draw[->, thick] (v) -- (w1);
\node[below=1mm of w, xshift=6mm] (l) {(III)(ii)(a)};
\end{scope}
\begin{scope}[xshift=12cm] %---------
\node[rv] (s) {$s$};
\node[rv, right=of s] (u) {$u$};
\node[rv, below=of u] (v) {$v$};
\node[rv, below=of s] (w) {$w$};
\node[rv, inner sep=0pt, left=of w, xshift=5mm, yshift=7mm] (w1) {$w_1$};
\draw[-, thick] (s) -- (u);
\draw[-, thick] (s) -- (v);
\draw[->, thick] (u) -- (v);
\draw[->, thick] (v) -- (w);
\draw[-, thick] (w) -- (w1);
\draw[-, thick] (v) -- (w1);
\draw[->, thick] (u) -- (w1);
\node[below=1mm of w, xshift=6mm] (l) {(III)(ii)(b)};
\end{scope}

\end{tikzpicture}
\caption{Graphs that appear in the proof of \cref{lem:simplicial}.}
\label{fig:proof-simplicial}
\end{figure}

\begin{proof}
Because $u$ is simplicial, it suffices to show that $w \in \{u\} \cup \Nb(u)$ holds for every $w \in \Nb(v)$. For each $w \in \Nb(v)$, there are three cases as illustrated in \cref{fig:proof-simplicial}. For each case, the argument is based on the four induced subgraphs listed in \cref{fig:dags}(b) that are forbidden in any MPDAG. 
\begin{enumerate}[(I)]
\item $w \in \Sib(v)$. The result holds trivially if $w=s$. For a contradiction, suppose $w \neq s$ and $w \notin \Nb(u)$. Now observe that $u \rightarrow v - w$, as an induced subgraph, is forbidden.

\item $w \in \Pa(v)$. Suppose $u,w$ are not adjacent. To avoid forbidden structure $w \rightarrow v - s$, we have an edge between $s$ and $w$. Further, the edge cannot be undirected because otherwise the graph induced by $\{u,v,w,s\}$ is a forbidden structure. Therefore, the edge must be directed, in particular, as $w \rightarrow s$ due to the undirected edge $s-v$. However, this induces another forbidden structure $w \rightarrow s - u$. 

\item $w \in \Ch(v)$. Because every pair of vertices is connected by an undirected path in $\H$, let $p$ be a shortest undirected path that connects $w$ and $\{u,s,v\}$. We prove $w \in \Nb(u)$, in particular, $u \rightarrow w$, by induction on the length of $p$. 

\begin{enumerate}[(i)]
\item Base case: $p$ is of length 1. Observe that in this case we must have an undirected edge between $s$ and $w$. Without an edge between $u$ and $w$, the graph induced by $\{s,u,v,w\}$ is a forbidden structure. Hence, we have an edge between $u$ and $v$, in particular, $u \rightarrow w$ by acyclicity. 

\item Suppose the result holds when $p$ is of length $l$ or shorter for $l \geq 1$. Now we show that the result still holds when $p$ is of length $l+1$. We write the path as $p = \langle w \equiv w_0, w_1, \dots, w_{l+1} \rangle$ for $w_{l+1} \in \{s,u,v\}$. To avoid the forbidden structure $v \rightarrow w - w_1$, we know $v$ and $w_1$ are adjacent. There are two further cases. 

\begin{enumerate}[(a)]
\item $v \rightarrow w_1$. Observe that $w_1$ plays the role of $w$ and $w_1$ is connected to $\{s,u,v\}$ by an undirected path of length $l$. By our induction hypothesis, we have $u \rightarrow w_1$. To avoid the forbidden structure $u \rightarrow w_1 - w$, there must be an edge between $u$ and $w$. In particular, we have $u \rightarrow w$.

\item $v - w_1$. By case (I), we know $u$ and $w_1$ are adjacent. To prove by contradiction, suppose there is no edge between $u$ and $w$. Observe that the edge between $u$ and $w_1$ cannot be undirected since otherwise $\{u,v,w,w_1\}$ induces a forbidden structure. Further, due to $w_1 - v$, we cannot have $w_1 \rightarrow u$. Hence, we must have $u \rightarrow w_1$. However, we see a contradiction because this leads to a forbidden structure $u \rightarrow w_1 - w$. Hence, there must be an edge between $u$ and $w$, in particular, $u \rightarrow w$. 

\end{enumerate}

\end{enumerate}

\end{enumerate}
\end{proof}

\begin{figure}[t!]
\includegraphics[width=.6\columnwidth]{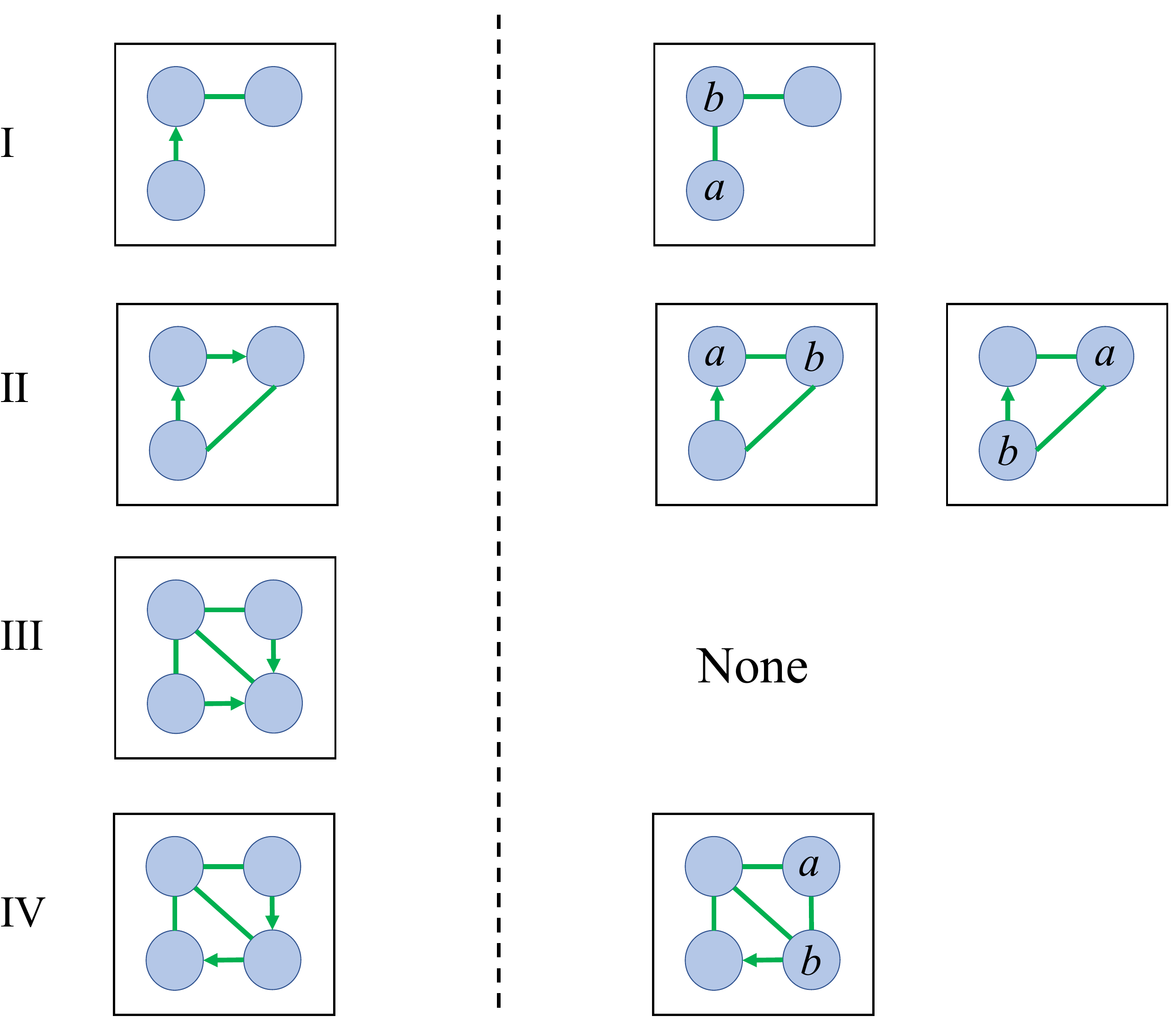}
\caption{Induced subgraphs that are forbidden in an MPDAG (left) and their corresponding pre-structures (right). Each pre-structure can exist as an induced subgraph in an MPDAG, but upon directing $a - b$ to $a \rightarrow b$, it becomes a forbidden structure.}
\label{fig:pre-forbid}
\end{figure}

\begin{lemma}[MPDAG single-edge orientation] \label{lem:one-by-one}
Let $\G$ be any MPDAG with one or more undirected edges. 
Then, there exists an undirected edge in $\G$ such that by directing it and leaving all the other edges unchanged, we obtain another MPDAG $\tilde{\G}$.
\end{lemma}
\begin{proof}[Proof of \cref{lem:one-by-one}]
The proof consists of two parts. First, we show that by choosing an undirected edge $a-b$ that meets certain criteria (assuming such $a-b$ exists in $\G$) and directing it to $a \rightarrow b$ and leaving everything else intact, we obtain an MPDAG $\tilde{\G}$. Second, we argue that an edge $a-b$ that meets our selection criteria always exists in $\G$. 

 We show the first part. 
We select an edge $a-b$ in $\G$ that meets the following criteria:
\begin{enumerate}[(i)]
\item $\Sib_{\G}(b) \subseteq \{a\} \cup \Sib_{\G}(a) \cup \Ch_{\G}(a)$, and 
\item there is no vertex $c$ such that $b \rightarrow c$ and $b - s - c$ exist in $\G$.
\end{enumerate}
Let $\tilde{\G}$ be a copy of $\G$ except that $a-b$ is directed to $a \rightarrow b$. 
We argue that $\tilde{\G}$ is an MPDAG by showing that it satisfies the graphical constraints listed in \cref{sec:valid-mpdag}.
The chordality constraint is satisfied because $\G$ is already an MPDAG and we have only directed an undirected edge. 
Thus, it remains to show that the four forbidden structures listed in \cref{fig:dags}(b) are not introduced. 
By inspecting these forbidden structures, we list in \cref{fig:pre-forbid} the only possible ways that a forbidden structure can be introduced upon directing a single edge: forbidden structure III cannot be introduced; for structures I, II and IV to be introduced, a corresponding pre-forbidden structure must exist in $\G$ and $a-b$ must be chosen as specified in the figure. 
To finish proving the first part, we argue that our selection criteria prevent $a-b$ from being chosen in such ways: 
\begin{itemize}
\item[Structure I.] This violates (i);
\item[Structure II.] The first case violates (i) and the second case violates (ii);
\item[Structure IV.] This violates (ii). 
\end{itemize}

We now prove the second part, namely the existence of $a-b$ in $\G$ that meets our selection criteria. 
By our assumption, $\G$ has one or more undirected edge. 
Accordingly, let $\sC$ be a (maximal) connected component spanned by undirected edges that has two or more vertices and let $\H := \G_{\sC}$.

We first choose $b$ in $\H$ that satisfies criterion (ii). 
By \cref{sec:valid-mpdag}, $\H$ is an MPDAG and the skeleton of $\H$ is chordal. 
By a classical result on chordal graphs \citep[Lemma 2.9]{lauritzen1996graphical}, there exists a simplicial vertex $b'$ in $\H$ with $\Sib_{\H}(b') \neq \emptyset$.
If $b'$ satisfies (ii), let $b := b'$. 
Otherwise, whenever $b' \rightarrow c$ and $b' - s - c$ holds in $\G$ (and hence in $\H$ by definition), replace $b'$ with $c$. 
We repeat this process until (ii) is satisfied and let $b$ be the final $b'$. 
Because $\H$ has no directed cycle, the process must terminate in a finite number of steps. 
Further, by \cref{lem:simplicial}, the process maintains the property that $b'$ is simplicial. Meanwhile, the fact that $\Sib_{\H}(b') \neq \emptyset$ is also maintained. 
Hence, the resulting $b$ is simplicial in $\H$ with $\Sib_{\H}(b) \neq \emptyset$.

Finally, we choose $a \in \Sib_{\H}(b)$ such that $a$ is topologically minimal in $\Sib_{\H}(b)$. 
It only remains to show that criterion (i) holds. 
Since $b$ is simplicial in $\H$, for each $u \in \Sib_{\H}(b)$, we have $u=a$ or $u \in \N_{\H}(a)$. 
Further, by the way that $a$ is chosen, we know $u \notin \Pa_{\H}(a)$. Hence, we have $u \in \{a\} \cup \Sib_{\H}(a) \cup \Ch_{\H}(a)$, i.e., 
\[ \Sib_{\H}(b) \subseteq \{a\} \cup \Sib_{\H}(a) \cup \Ch_{\H}(a). \]
By the fact that $\Sib_{\G}(b) = \Sib_{\H}(b)$, $\Sib_{\G}(a)=\Sib_{\H}(a)$ and $\Ch_{\H}(a) \subseteq \Ch_{\G}(a)$, we obtain 
\[ \Sib_{\G}(b) \subseteq \{a\} \cup \Sib_{\G}(a) \cup \Ch_{\G}(a), \]
i.e., criterion (i). 
\end{proof}

\begin{proof}[Proof of \cref{prop:mpdag_upper}]
Suppose $\G$ has $l$ undirected edges and $k$ directed edges, so we have $\pseudorank(\G) = 2l + k$. The maximal chain is given by 
\[ \G=:\G_{2l+k} \cover \G_{2(l-1)+k+1} \cover \dots \cover \G_{l+k} \cover \G_{l+k-1} \cover \dots \cover \G_{0} := \Glst, \]
which consists of two segments: (i) starting from $\G$, we repeatedly apply \cref{lem:one-by-one} to convert one undirected edge to a directed edge until we reach $\G_{l+k}$ that only contains directed edges; (ii) then, we remove directed edges iteratively in any order till we reach the empty graph. 
\end{proof}

% ==============
\subsubsection{Proof of Proposition~\ref{prop:shd_pseudo}} \label{appendix:shd_pseudo}

\begin{proof}
The proof is based on the following observation: for any pair of neighboring graphs $\G_1, \G_2 \in \mathfrak{G}$ with $\pseudorank(\G_2) - \pseudorank(\G_1) = 1$, the graphs $\G_1$ and $\G_2$ are identical except for a single edge: either a missing edge versus a directed edge, or a directed edge versus an undirected edge.

Consider a pair of vertices such that the edge between them differs in $\G_s$ and $\G_t$. 
Then, based on the observation above, for any path $p \in \sP_{\Lp,\pseudo}(\G_s, \G_t)$, then $p$ must go through some neighboring graphs $\G_i, \G_{i+1}$ such that they differ exactly at this edge.  
If the edge is missing in one graph and undirected in the other, or if the edge is directed in opposite orientations in the two graphs, then at least two such moves exist in the path. This implies that $d_{\Lp,\pseudo}(\G_s, \G_t)$ is lower bounded by $\SHD_2(\G_s,\G_t)$. 
\end{proof}

\subsubsection{Polytree MPDAGs} \label{sec:apdx-polytree-mpdag} 
We first prove an auxiliary lemma. 

\begin{lemma} \label{lem:polytree-mpdag-rm}
Let $\G$ be a polytree MPDAG. By removing any edge from $\G$, we obtain an MPDAG $\G'$ that satisfies $\G' \prec \G$.
\end{lemma}
\begin{proof}
Using the characterization of valid MPDAGs in \cref{sec:valid-mpdag}, $\G'$ is still an MPDAG: it is a PDAG, and further (i) the chordality condition is satisfied, and (ii) no forbidden structure is introduced. 
It then follows from \cref{prop:order-mpdags} that $\G' \prec \G$. 
\end{proof}

\begin{proof}[Proof of \cref{thm:poset-polytree-mpdags}] \hfill
\begin{enumerate}
\item This directly follows from definition.

 \item Let $\G_1,\G_2$ be polytree MPDAGs with $\G_1 \coverby \G_2$. 
By \cref{prop:pseudo-covering}, we have $\pseudorank(\G_1) +1 \leq \pseudorank(\G_2)$. For contradiction, suppose $\pseudorank(\G_1) +  1 < \pseudorank(\G_2)$, or equivalently, $\pseudorank(\G_1) +  2 \leq \pseudorank(\G_2)$. 
There are three cases to consider and we show that there is a contradiction in each case. 

\begin{itemize}
\smallskip \item[Case I] Two or more pairs of vertices are connected in $\G_2$ but not in $\G_1$. 
Let $(u,v)$ be such a pair. 
Using \cref{lem:polytree-mpdag-rm}, removing the edge between $u,v$ from $\G_2$ leads to an MPDAG $\tilde{\G}$ with $\tilde{\G} \prec \G_2$. 
It also follows from \cref{lem:polytree-mpdag-rm} that $\G_1 \prec \tilde{\G}$. This contradicts $\G_1 \coverby \G_2$. 

\smallskip \item[Case II] There is at least one pair of vertices that is connected via an undirected edge in $\G_2$ but is connected via a directed edge in $\G_1$, and there is at least one pair of vertices, say $(u,v)$, that are connected in $\G_2$ but not in $\G_1$. By an argument similar to Case I, removing the edge between $u,v$ from $\G_2$ yields an MPDAG $\tilde{\G}$ satisfying $\G_1 \prec \tilde{\G} \prec \G_2$ and hence a contradiction. 

\smallskip \item[Case III]  There are at least two pairs of vertices, say $(i,j)$ and $(k,l)$, that are connected by an undirected edge in $\G_2$ but by a directed edge in $\G_1$. 
Suppose $i \to j$ and $k \to l$ appear in $\G_1$. 
Let $\mathcal{C}$ be the CPDAG that corresponds to $\G_2$. 
Because $\G_2$ is an MPDAG, it must result from imposing a set of edge orientations $\mathcal{E}$ (can be empty) on $\mathcal{C}$ and completing Meek's rules. 
Let $\tilde{\G}$ be the MPDAG that represents $\mathcal{C}$ with orientations $\mathcal{E} \cup \{i \rightarrow j\}$. 
Following the argument from Case II, we have that $\G_1 \preceq \tilde{\G} \preceq \G_2$. 
Due to $i \rightarrow j$ in $\tilde{\G}$ but $i - j $ in $\G_2$, we know $\tilde{\G} \neq \G_2$.
To prove the result, we will show that $(i,j)$ and $(k,l)$ can be chosen in a way that ensures $\tilde{\G} \neq \G_1$. 

More concretely, we show that $(i,j)$ and $(k,l)$ can be chosen such that $k-l$ instead of $k \rightarrow l$ appears in $\tilde{\G}$. 
To see this, note that in a polytree, the only rule in \citet{meek1995causal} that can be applied when directing an edge is the following: if there is a structure of the form $u \rightarrow v - w$, direct the edge $v-w$ as $v\rightarrow w$. 
By construction of $\tilde{\G}$, $k - l$ becomes $k \rightarrow l$ in $\tilde{\G}$ only if it is compelled by this rule. 
Therefore, if $k \rightarrow l$ appears in $\tilde{\G}$, then there must exist an undirected path $i - j - \dots - k - l$ ($j$ may be identical to $k$) in $\G_2$ such that one or more applications of the rule propagates $i \rightarrow j$ to $k \rightarrow l$ in $\tilde{\G}$.
However, if we were to swap the choices of $(i,j)$ and $(k,l)$, then we would have $i \rightarrow j$ and $k -l $ in $\tilde{\G}$, which finishes the proof. 
\end{itemize}

\item Let $\G_1,\G_2,\G_3 $ be polytree MPDAGs with $\G_2,\G_3 \coverby \G_1$ and $\G_2 \neq \G_3$. 
By the previous result, we know 
\[\pseudorank(\G_2)=\pseudorank(\G_3)=\pseudorank(\G_1)-1.\]
Therefore, $\G_2$ and $\G_3$ differ from $\G_1$ by having either a single directed edge from $\G_1$ missing or having a single directed edge that is undirected in $\G_1$. Up to relabeling we therefore have three cases so let i) be the case where $\G_2$ and $\G_3$ both have an additional directed edge that is undirected in $\G_1$, ii) only $\G_2$ has such an edge and iii) neither do.
% Wit loss of generality we therefore three cases: i) both $\G_2$ and $\G_3$ have one more directed edge compared to $\G_1$, ii) $\G_1$ has one more directed edge while $\G_2$ has one fewer and iii) both $\G_2$ and $\G_3$ have one fewer directed edge. \Ric{I think description of i is clear but ii, iii are unclear to me} For each of the three cases we now construct an MPDAG $\G_4$ that is a polytree such that $\G_2,\G_3 \cover \G_4$.

For the first case, note that when directing an undirected edge in an MPDAG that is a polytree, the resulting graph is guaranteed to be a polytree and therefore chordal. It is also an MPDAG unless the directed edge is now part of a forbidden structure of the form $ \to v -$ since the other three forbidden structures cannot exist in a polytree. Since $\G_2$ and $\G_3$ are MPDAGs by assumption, it follows that the edge directed in $\G_2$ (or $\G_3$) when compared to $\G_1$ does not point into another undirected edge in $\G_1$ or $\G_2$, respectively $\G_3$. Therefore, we can direct the edge directed in $\G_3$ also in $\G_2$ to obtain $\G_4$. By construction, $\G_4 \coverby \G_2,\G_3$. For the second and third case, note that we can remove any directed edge from an MPDAG that is a polytree by Lemma \ref{lem:polytree-mpdag-rm}. In both Case ii) and iii), we can therefore construct $\G_4$ by deleting the directed edge missing in $\G_3$ from $\G_2$. By construction, $\G_4 \coverby \G_2,\G_3$. 

 \item From Result 2, we know $\sP_{\Lp_\poly,\pseudo} = \sP_{\Lp_\poly}$ and hence $d_{\Lp_{\poly},\pseudo}(\G_s,\G_t) = d_{\Lp_{\poly}}(\G_s,\G_t)$. 
Further, by Result 3 and \cref{thm:semimod}, we also have $d_{\Lp_{\poly}}(\G_s,\G_t)=d_{\Lp_{\poly},\du}(\G_s,\G_t)$.
\end{enumerate}
\end{proof}

% ==================
\section{Idiosyncrasies of the structural interventional distance and s/c-metric}
\subsection{Structural intervention distance}
\label{app:SID}

The structural intervention distance (SID) measures the distance between a candidate graph $\G_{\mathrm{guess}}$ and a ground-truth graph $\G_{\mathrm{true}}$ by counting the number of interventional distributions of the form $f(y \mid do(X = x))$ that would be incorrectly inferred if parent adjustment were applied according to $\G_{\mathrm{guess}}$, while the true causal model conforms to $\G_{\mathrm{true}}$. SID was originally proposed as an alternative to SHD, aiming to account for the fact that the graphs being compared are \emph{causal} rather than purely structural. In this sense, it represents an attempt to define a model-oriented distance specifically for causal DAGs.

However, as noted by \citet{peters2015structural}, SID is not a metric. The SID between a pair of DAGs is zero if and only if $\G_{\mathrm{guess}}$ is a subgraph of $\G_{\mathrm{true}}$. Consequently, SID does not satisfy the standard metric properties. It fails to ensure that $d_{\mathrm{SID}}(\G_1, \G_2) = 0$ if and only if $\G_1 = \G_2$, is not symmetric and does not obey the triangle inequality. It is, however, possible to symmetrize SID by averaging the two asymmetric distances for each pair of graphs. The resulting symmetrized SID satisfies the first two axioms of a metric, but as we will show with an example, it still fails to satisfy the triangle inequality.

\begin{figure}
\subfloat[$\G_1$]{
{\includegraphics[width=.12\textwidth]{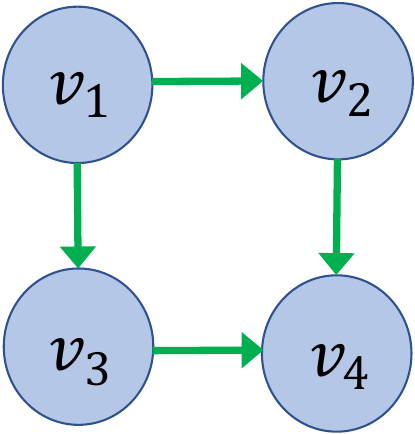}}
}
\hspace{0.3in}
\subfloat[$\G_2$]{
{\includegraphics[width=.12\textwidth]{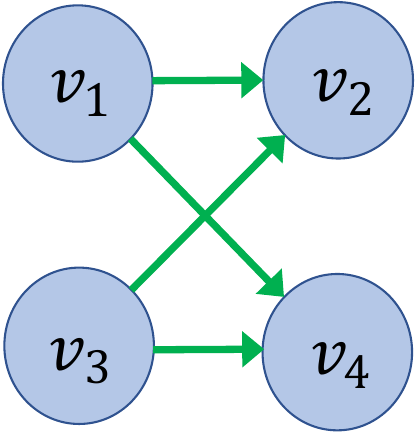}}
}
\hspace{0.3in}
\subfloat[$\G_3$]{
{\includegraphics[width=.12\textwidth]{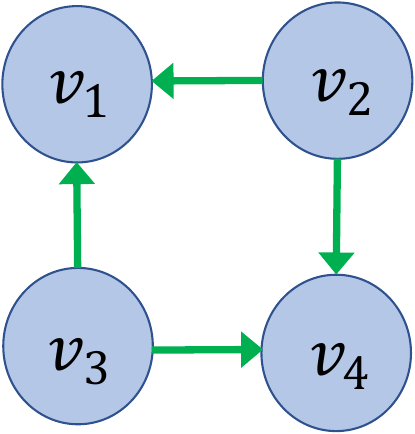}}
}
	\caption{Three DAGs for which symmetrized SID violates the triangle inequality  as shown in Example \ref{ex:sid counterexample}.}
	\label{fig:sid_counterexample}
\end{figure}

\begin{example}[Symmetrized SID violates triangle inequality]
\label{ex:sid counterexample}
Let $\G_1$, $\G_2$, and $\G_3$ be the three DAGs from \cref{fig:sid_counterexample}. 
Let $d_{\mathrm{SID}}$ denote the structural intervention distance for DAGs  \citep{peters2015structural}. Using the open source implementation for computing SID  \citep{henckel2024gadjid}, we obtain that $d_{\mathrm{SID}}(\G_1,\G_2)=3, d_{\mathrm{SID}}(\G_2,\G_1)=4,d_{\mathrm{SID}}(\G_2,\G_3)=5,d_{\mathrm{SID}}(\G_3,\G_2)=5,d_{\mathrm{SID}}(\G_1,\G_3)=9$ and $d_{\mathrm{SID}}(\G_3,\G_1)=9$. For the symmetrized version of SID, denoted $d_{\mathrm{sySID}}$, it therefore holds that 
\begin{align*}
    d_{\mathrm{sySID}}(G_1,G_2)+d_{\mathrm{sySID}}(G_2,G_3)= \frac{3+4}{2} + \frac{5+5}{2}=8.5 < 9 = \frac{9+9}{2}=d_{\mathrm{sySID}}(G_1,G_3),
\end{align*}
which shows that it does not satisfy triangle inequality and therefore fails to be a metric.
    
\end{example}

\subsection{s/c-metric}\label{app:sep-distance}

We interpret a probabilistic graphical model as the collection of all random variables satisfying the conditional independence constraints encoded by the graph. A closely related and popular alternative is to interpret a probabilistic graphical model directly as the collection of conditional independence statements encoded by the graph or more formally the set of all triples $(X,Y,Z)$ corresponding to a conditional independence statement of the form $X \indep Y \mid Z $ \citep{studeny1992conditional}. Based on this interpretation of a CPDAG as a collection of triples \citep{wahl2025metric} have proposed an alternative distance notion called the s/c-metric. It defines the distance between two CPDAGs as the weighted size of the symmetric difference of the two corresponding sets of triples. Concretely, each discordant CI statement of the form $X \indep Y \mid Z $ is weighted by the number of potential CI statements with conditioning sets of size equal to $|Z|$ times $(n-1)$, that is, if $|Z|=k$ then the weight is the inverse of $$\frac{n(n-1)^2}{2}\binom{n-2}{k}.$$
This ensures that the overall distance is a number between $0$ and $1$.
Similarly to our class of distances, the s/c-metric is a metric and model-oriented, albeit on a different model interpretation. It can also be generalized to any class of probabilistic graphical models that encode only conditional independence statements such as undirected graphs, albeit being expensive to compute.
% In fact it can be viewed as a minimal path distance on a Hasse diagram: Consider the set of all triples $X,Y,Z \subseteq \{1,\dots,n\}$ and the poset structure induced by subset inclusion on this set. This poset forms a lattice, with the meet and join between two sets of triples defined as their intersection and union, respectively. As a result, the minimal path distance on the corresponding Hasse diagram equals the down-up distance, which in turn equals the size of the symmetric difference between the two sets. 

However, the fact that the s/c-metric is implicitly defined on the set of all triples of subsets of the node set, while ignoring which of these correspond to a CPDAG or even valid conditional independence models for that matter leads to some idiosyncrasies. In particular, the s/c-metric between sparse graphs tends to be larger than between dense graphs, simply because the former encode more separation statements to begin with and as a result dropping an additional statement typically triggers a cascade of implications.
% Second, the separation distance between two graphs increases substantially when a disconnected node is added to both. We now illustrate these characteristics with an example.

\begin{example}
Let $\G_1$ denote the empty CPDAG, $\G_2$ a CPDAG with a single (undirected) edge between nodes $i$ and $j$, $\G_3$ the fully connected CPDAG, and $\G_4$ the same graph with one (undirected) edge removed, all on $n$ vertices.  Since all d-separation statements hold in $\G_1$, it can be represented by the set of all triples of subsets on the set of $n$ vertices. In $\G_2$, on the other hand, all statements hold except those of the form 
\[\mathrm{V}_i \indep_{d} \mathrm{V}_j \mid Z, \quad Z \subseteq V \setminus \{\mathrm{V}_i, \mathrm{V}_j\}.\]
As a result, the s/c-metric between $\G_1$ and $\G_2$ is 
$
1/(n(n-1)).
$ 
Furthermore, for every disconnected node we add to both $\G_1$ and $\G_2$, their distance increases further. 
% This is the case even though there is no valid conditional independence model between $\G_1$ and $\G_2$. 
The CPDAG $\G_3$, on the other hand, encodes no d-separation statements and therefore corresponds to the empty set, while $\G_4$ encodes the single statement  
$\mathrm{V}_i \indep \mathrm{V}_j \mid \mathrm{V} \setminus \{\mathrm{V}_i, \mathrm{V}_j\}$. Hence, the s/c-metric between $\G_3$ and $\G_4$ equals $1/(n(n-1)^2)$, that is, it is $(n-1)$ times smaller than that between $\G_1$ and $\G_2$. Adding a disconnected node to both graphs increases this distance yet further.

In contrast, $d_{\Lp}(\G_1,\G_2) = d_{\Lp}(\G_3,\G_4) = 1$, as $\G_2$ is a covering element of $\G_1$ and $\G_4$ of $\G_3$ in $\Lp$. This reflects the fact that none of the triples intermediate between the triple representing $\G_1$ and that representing $\G_2$ correspond to a CPDAG, or even to a valid conditional independence model. Furthermore, these distances remain unchanged if we add a disconnected node to all four graphs.
\end{example}

%-----------------------------------------------
\section{Algorithms and additional numerical results} \label{sec:adpx-alg}

\subsection{Proof of \cref{thm:alg}} \label{sec:proof-alg}
\begin{proof}
Because $\Lp$ is connected and $\g$ is finite, $d_{\Lp}$ is well-defined for every pair of graphs in $\g$. Suppose the algorithm does not terminate. Then, it must be due to graphs continuing to be pushed to \textsf{openSet} in line 22. However, this means line 19 of the algorithm is executed infinitely many times, where after each time $g[\G']$ is either created or strictly decreased. We have a contradiction because this implies the condition in line 18 must fail at some point. 

In what follows, we show that the algorithm returns the correct distance. 
Observe that the while loop cannot exit without returning either at line 8 or 12, because if so $\G_t$ is never visited and that contradicts the connectedness of $\Lp$. 
We first show that if the algorithm returns at line 8, $u^\ast$ upon return equals the desired distance.
To see this, we argue that there must exist a shortest path that is routed through a graph $\G'$ in $\textsf{openSet}$ for which it holds that $f(\G') \leq d_{\Lp}(\G_s,\G_t)$. 
Specifically, consider any shortest path $p^\ast$ from $\G_s$ to $\G_t$. 
Because $\G_t$ is not yet reached, we have $p^\ast \cap \textsf{openSet} \neq \emptyset$ and we choose $\G'$ to be the graph in this intersection that is pushed to \textsf{openSet} most recently. Indeed, for this graph on the frontier, we have $g[\G'] = d_{\Lp}(\G_s, \G')$ and hence $f(\G') = g[\G'] + h(\G') \leq d_{\Lp}(\G_s, \G') + d_{\Lp}(\G', \G_t) = d_{\Lp}(\G_s, \G_t)$  by the admissibility of $h$. 
Then, by the property of the priority queue, it follows that $\textsf{f} \leq f(\G') \leq  d_{\Lp}(\G_s,\G_t) \leq u^{\ast}$. 
Meanwhile, we also have $\textsf{f} \geq u^{\ast}$ by line 7 so we conclude $\textsf{f} = u^{\ast} = d_{\Lp}(\G_s,\G_t)$. 

Now we argue that \textsf{g} returned at line 12 equals $d_{\Lp}(\G_s,\G_t)$. Suppose this does not hold. 
Observe that $\textsf{g} = g[\G_t]$ when $\G_t$ was pushed into \textsf{openSet} and by construction $g[\G_t] \geq d_{\Lp}(\G_s,\G_t)$ holds, so we must have $\textsf{g} > d_{\Lp}(\G_s,\G_t)$. 
Let $p^{\ast}$ be any shortest path from $\G_s$ to $\G_t$. 
Similar to the argument in the preceding paragraph, let $\G^{\ast}$ be the graph on $p^\ast$ that was last pushed to \textsf{openSet} right before $\G_t$ was popped at line 6.
By construction of the algorithm and the fact that $p^{\ast}$ is a shortest path, we have $g[\G^{\ast}] = d_{\Lp}(\G_s,\G^{\ast})$. Further, using admissibility of $h$, we know 
\[ f(\G^{\ast}) = g[\G^{\ast}] + h(\G^{\ast}) \leq d_{\Lp}(\G_s,\G^{\ast}) + d_{\Lp}(\G^{\ast},\G_t) = d_{\Lp}(\G_s,\G_t). \]
However, because the priority queue pops the item with the smallest index, we also have $f(\G^{\ast}) \geq f(\G_t) = \textsf{g} + 0 > d_{\Lp}(\G_s, \G_t)$ and hence a contradiction. 
\end{proof}

\subsection{Specialization of Algorithm~\ref{alg:general_astar} to CPDAGs} 
\label{sec:additional_algs}

We specialize \cref{alg:general_astar} to $\g$, the class of CPDAGs over a vertex set $\sV$, by specifying the subroutines required by \cref{alg:general_astar}. 
By \cref{prop:cpdags}, an admissible heuristic can be constructed from 
\[ d_{\Lp}(\G,\G_t) \geq |\sk(\G) \triangle \sk(\G_t)| + 2m_{\mathrm{op}}(\sS(\G,\G_t)) \geq |\sk(\G) \triangle \sk(\G_t)|, \quad \G \in \g,\] 
where $|\sk(\G) \triangle \sk(\G_t)|$ and $m_{\mathrm{op}}$ can be computed using \cref{alg:lower_bound,alg:min_ops} listed below. By the same proposition, $u(\G)$ can be constructed from the upper bound $d_{\Lp}(\G,\G_t) \leq \min\left\{d_{\Lp,\du}(\G,\G_t), d_{\Lp,\ud}(\G,\G_t) \right\}$ for every $\G \in \g$. Next, we describe how to compute $d_{\Lp,\du}(\G,\G_t)$ and the same logic also applies to $d_{\Lp,\ud}(\G,\G_t)$. 

Because the poset for CPDAGs is graded, by \cref{prop:down_up_up_down_graded}, we have 
\[ d_{\Lp,\du}(\G,\G_t) = \rank(\G)+\rank(\G_t)-2 \rank(\G^\ast), \quad \G \in \g,\]
where $\rank(\cdot)$ is given by the total number of edges (i.e., $|\sk(\cdot)|$). 
Here, $\G^{\ast}$ is the inflection point, i.e., a graph with maximal rank among those that are below both $\G$ and $\G_t$. 
In fact, we can use \cref{alg:general_astar} to search for $d_{\Lp,\du}(\G,\G_t)$, only with the following adaptations: 
(i) $\textsc{EnumNeighbors}(\G')$ is restricted to the neighbors below, i.e., those graphs covered by $\G'$; 
(ii) the upper bound is set to 
\[ u(\G') := \begin{cases} \rank(\G_t) + \rank(\G'), & \quad \G' \not \preceq \G_t \\
\rank(\G_t) - \rank(\G'), & \quad \G' \preceq \G_t \end{cases},\]
where the first case corresponds to a path that first goes downward from $\G$ to $\Glst$ and then upward to $\G_t$; 
(iii) the heuristic is set to 
\[ h(\G'):= \rank(\G_t)+\rank(\G')-2|\mathrm{sk}(\G_t) \cap \mathrm{sk}(\G')|,\]
which is admissible as we show in \cref{sec:algorithm_theory}. 
Starting from $\G_s$, this algorithm searches downward until it finds the inflection point $\G^\ast \preceq \G_t$, upon which we have $u(\G^\ast) = h(\G^\ast) = \rank(\G_t) - \rank(\G^\ast)$ and the algorithm terminates immediately by line 7 of \cref{alg:general_astar}. 
For checking $\G' \preceq \G_t$, we use the following result due to \citet{Kocka2001OnCI}.

\begin{lemma}[Characterization of CPDAG containment]
For CPDAGs $\G,\G'$ with model map $\Mcpdag$, we have $\G' \preceq \G$ if and only if for any (and hence every) pair of DAGs $\D' \in [\G']$ and $\D \in [\G]$, it holds for every pair of vertices $(u,v)$ that
\[ \text{no edge between $u,v$ in $\D$} \implies \text{$u,v$ are d-separated by $\Pa_{\D}(u) \cup \Pa_{\D}(v)$ in $\D'$}.  \]
\end{lemma}

Finally, from \cref{prop:order-cpdags} it follows that for CPDAGs $\G, \G'$, we have $\G \coverby \G'$ if and only if there exist DAGs $\D \in [\G]$ and $\D' \in [\G']$ such that $\D$ can be transformed to $\D'$ through zero or more steps of covered edge reversal followed by a single edge addition. 
This can be turned into a subroutine that enumerates the neighbors of a CPDAG. 
In fact, the GES algorithm due to \citet{chickering2002optimal} has already implemented this subroutine: the first phase of GES uses a set of valid \textsc{Insert} operations to enumerate the neighbors above (i.e., those that cover the current graph), and the second phase use a set of valid \textsc{Delete} operations to enumerate the neighbors below (i.e., those that are covered by the current graph). The reader is referred to \citet[\S5]{chickering2002optimal} for a description of these operations.

%\cref{alg:lower_bound} computes the lower bound on the model-oriented distance between two CPDAGs given by \cref{prop:cpdags}. \cref{alg:min_ops} computes $m_{\text{op}}(\cdot)$ for a set of v-structure triplets. 

\begin{algorithm}[tbp]
\DontPrintSemicolon
\SetKwFunction{Push}{Push}
\SetKwFunction{FindMinOps}{FindMinOps}
\SetKwFunction{DisjointTriplets}{DisjointTriplets}
\SetKwInOut{Require}{Require}
\SetKw{And}{and}
\SetKw{Or}{or}
\SetKw{In}{in}
\SetKw{NotIn}{not in}
\SetKw{Continue}{Continue}
\LinesNumbered

\KwIn{CPDAGs $\G_s$ and $\G_t$ over a vertex set $\sV$}
\Require{Subroutines $\FindMinOps(\cdot)$ and $\DisjointTriplets(\cdot)$}
\KwOut{The lower bound on $d_{\Lp}(\G_s,\G_t)$ given by \cref{prop:cpdags}}
$\G_{s,\sk} \gets \sk(\G_s)$\\
$\G_{t,\sk} \gets \sk(\G_t)$\\
$d \gets |\G_{s,\sk} \triangle \G_{t,\sk}|$ \tcp*{skeleton difference}
$\sS^{\ast} \gets \{\}$ \tcp*{$\sS(\G_s,\G_t)$}
\ForEach{ $a \in \sV$}{
    \ForEach{ $b \in \sV$}{
        \ForEach{ $c \in \sV$}{
        	\If{$a = b$ \Or $a=c$ \Or $b=c$}{
				\Continue
			}
			\If{$(a,b) \in \G_{s,\sk} \cap \G_{t,\sk}$ \And $(b,c) \in \G_{s,\sk} \cap \G_{t,\sk}$ \And $(a,c) \notin \G_{s,\sk} \cup \G_{t,\sk}$}{
				\uIf{$(a \rightarrow b \leftarrow c$ \In $\G_s)$ \And $(a \rightarrow b \leftarrow c$ \NotIn $\G_t)$}{
					$\Push(\sS^{\ast}, (a,b,c))$
				} 
				\uElseIf{$(a \rightarrow b \leftarrow c$ \In $\G_t)$ \And $(a \rightarrow b \leftarrow c$ \NotIn $\G_s)$}{
				$\Push(\sS^{\ast}, (a,b,c))$
				}
			}
		}
	}
		
}
$h \gets 0$\\
\ForEach{$\sS \in \DisjointTriplets(\sS^\ast)$}{
        $h \gets h + 2\cdot \FindMinOps(\sS)$
}
\Return{$h + d$}
\caption{Computing the lower bound in \cref{prop:cpdags} on $d_{\Lp}(\G_s,\G_t)$.}
\label{alg:lower_bound}
\end{algorithm}

\begin{algorithm}[tbp]
\DontPrintSemicolon
\SetKwFunction{Push}{Push}
\SetKwFunction{Pop}{Pop}
\SetKwFunction{Dict}{Dict}
\SetKwData{Q}{Q}
\SetKwData{pairs}{pairs}
\SetKw{And}{and}
\SetKw{Or}{or}
\SetKw{Continue}{Continue}
\LinesNumbered

\KwIn{Set $\sS$ consisting of a collection of vertex triplets}
\KwOut{$m_{\text{op}}(\sS)$}
$\Q \gets \Dict()$ \tcp*{tracks state and step count}
$\Q[S] \gets 0$ \\

\While{$\Q$ is not empty}{
    $(S', k) \gets \Pop(Q)$ \\
    \If{$S' = \emptyset$}{
        \Return{$k$}
    }
    $\pairs \gets \{\}$ \\
    \ForEach{$(a,b,c) \in S'$}{
        $\Push(\pairs, (a,b))$; $\Push(\pairs, (a,c))$; $\Push(\pairs, (b,c))$ \\
    }
    \ForEach{$(i,j) \in \pairs$}{
        \ForEach{$\mathrm{op} \in 
        \left\{\mathrm{OP}_{(i,j),\mathrm{CE}}, \mathrm{OP}_{(i,j),\mathrm{ER}} \right\}$}{
            $S'' \gets \mathrm{op}(S')$ \\
            $\Q[S''] \gets k+1$
        }
    }
}
\Return{$-1$} \tcp*{No Solution}
\caption{Subroutine \textsc{FindMinOps} for computing $m_{\text{op}}(\cdot)$ in \cref{eqs:mop}.}
\label{alg:min_ops}
\end{algorithm}

\subsection{Specialization of \cref{alg:general_astar} to MPDAGs and numerical results}
\label{app:MPDAG_numerical}
We consider using \cref{alg:general_astar} to compute $d_{\Lp_\poly}$ for a pair of polytree MPDAGs, where $\Lp_\poly$ is the subposet induced by polytree MPDAGs over a fixed vertex set. 
This is feasible because we show in \cref{thm:poset-polytree-mpdags} that $d_{\Lp_\poly} = d_{\Lp_{\poly},\du}$, so we can again adapt \cref{alg:general_astar} to compute the down-up distance.   
By the same theorem, the neighborhood of a graph $\G$ in $\Lp_\poly$ precisely consists of those graphs that differ from $\G$ by unit pseudo-rank, which leads to a straightforward \textsc{EnumNeighbors} subroutine. 
Finally, $h(\G):=\SHD_2(\G,\G_t)$ is an admissible heuristic. 

We compute the model-oriented distance for every pair of polytree MPDAGs over 5 vertices (6679 graphs in total), which takes 0.004 second per pair on average. \cref{fig:mpdag-numerical} compares the $d_{\Lp,\pseudo}$ and the structural Hamming distances for polytree MPDAGs over 5 vertices: about 98\% of the pairs have $d_{\Lp_{\poly}} = \SHD_2$. \cref{fig:cpdag_results_4nodes}(d) shows an example where $d_{\Lp_{\poly}} = \SHD_2 + 6$.

\begin{figure}[htbp]
\centering
\includegraphics[width=0.7\textwidth]{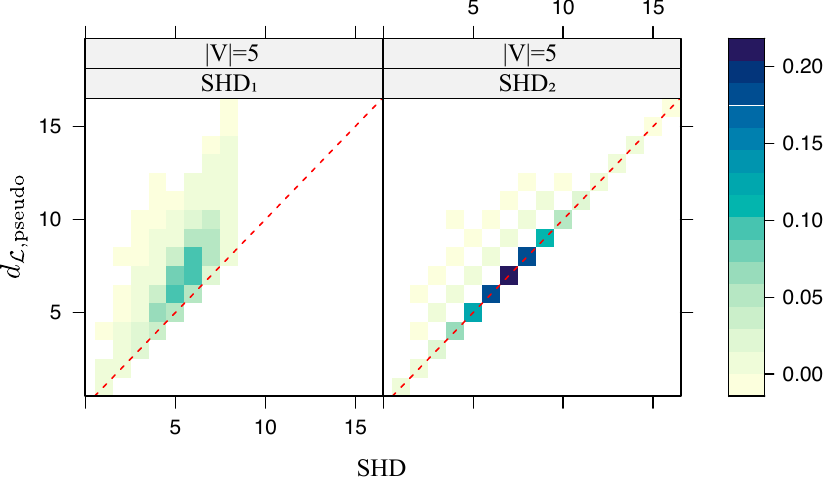}
\caption{The joint distribution of $d_{\Lp,\pseudo}$ \eqref{eqs:d-pseudo} and the structural Hamming distance computed for all pairs of polytree MPDAGs over 5 vertices, where the identity line is drawn as dashed.}
\label{fig:mpdag-numerical}
\end{figure}

\begin{figure}[htbp]
\centering
\begin{subfigure}[b]{0.28\columnwidth}
  \includegraphics[width=\linewidth]{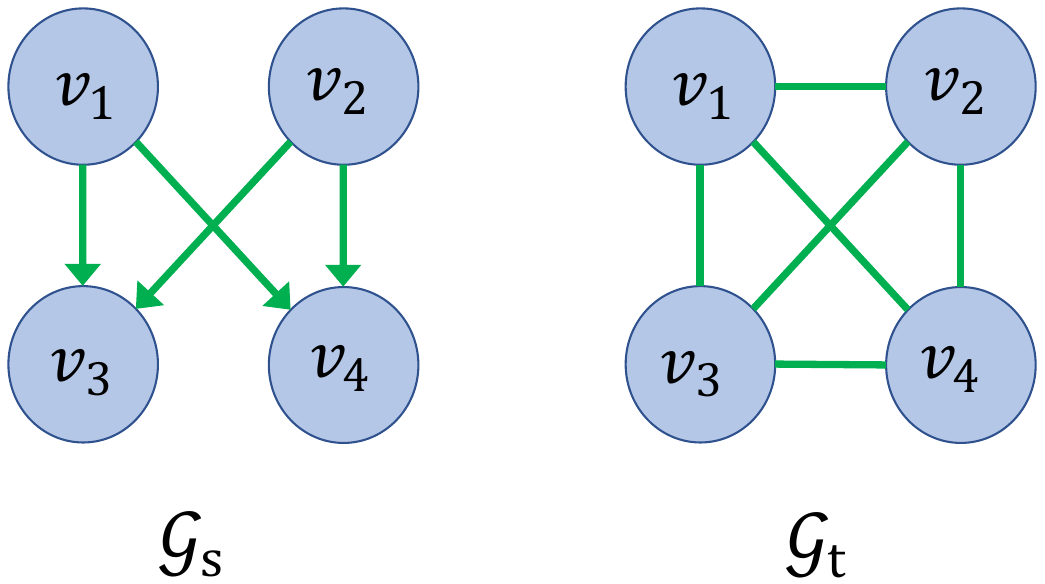}
  \caption{$d_{\Lp}=2$, $\SHD_1=6$, $\SHD_2=8$}
\end{subfigure}\quad\quad\quad
\begin{subfigure}[b]{0.28\columnwidth}
  \includegraphics[width=\linewidth]{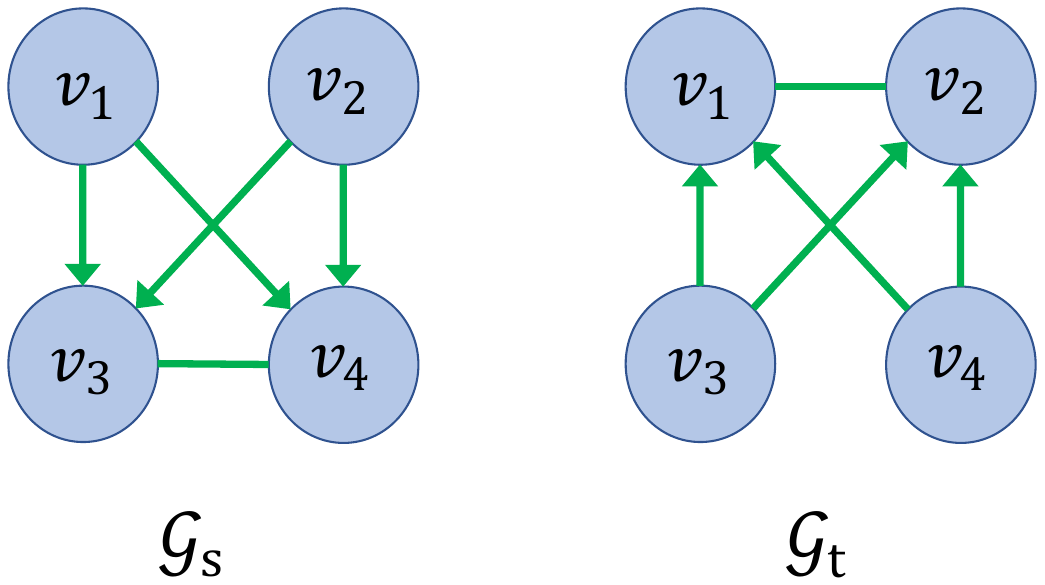}
  \caption{$d_{\Lp}=2$, $\SHD_1=6$, $\SHD_2=12$}
\end{subfigure}\quad\quad\quad
\begin{subfigure}[b]{0.28\columnwidth}
  \includegraphics[width=\linewidth]{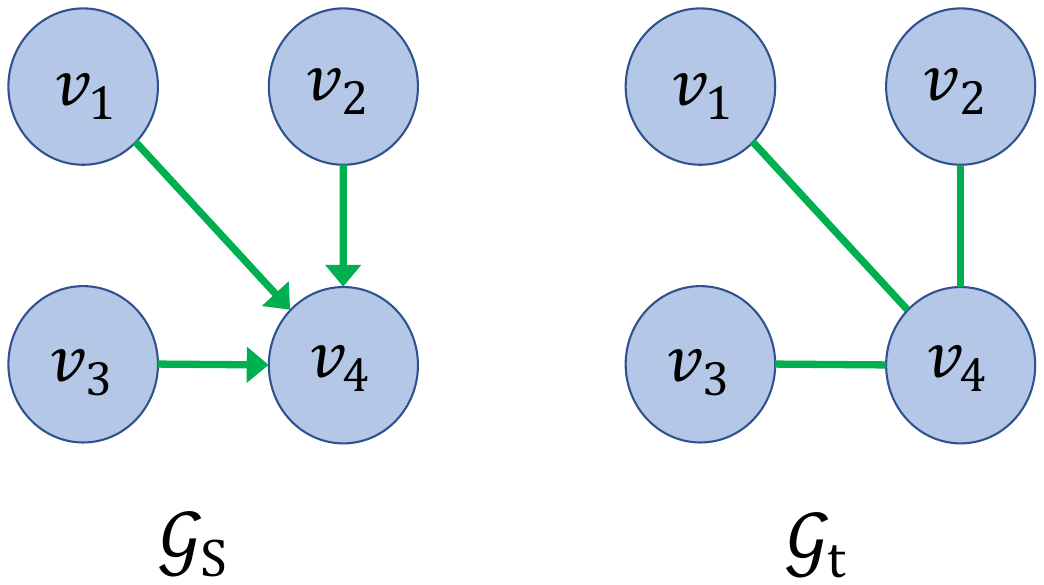}
  \caption{$d_{\Lp}=4$, $\SHD_1=3$, $\SHD_2=3$}
\end{subfigure}

\begin{subfigure}[b]{0.45\columnwidth}
  \includegraphics[width=\linewidth]{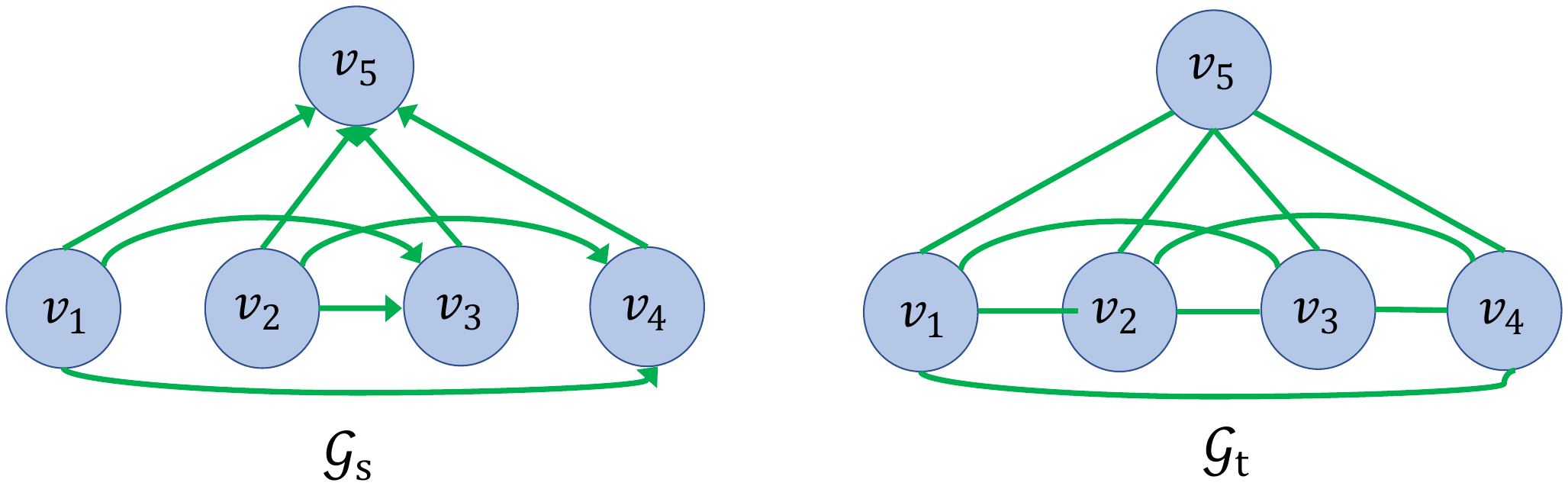}
  \caption{$d_{\Lp}=2$, $\SHD_1=10$, $\SHD_2=12$}
\end{subfigure}\quad\quad\quad
\begin{subfigure}[b]{0.45\columnwidth}
  \includegraphics[width=\linewidth]{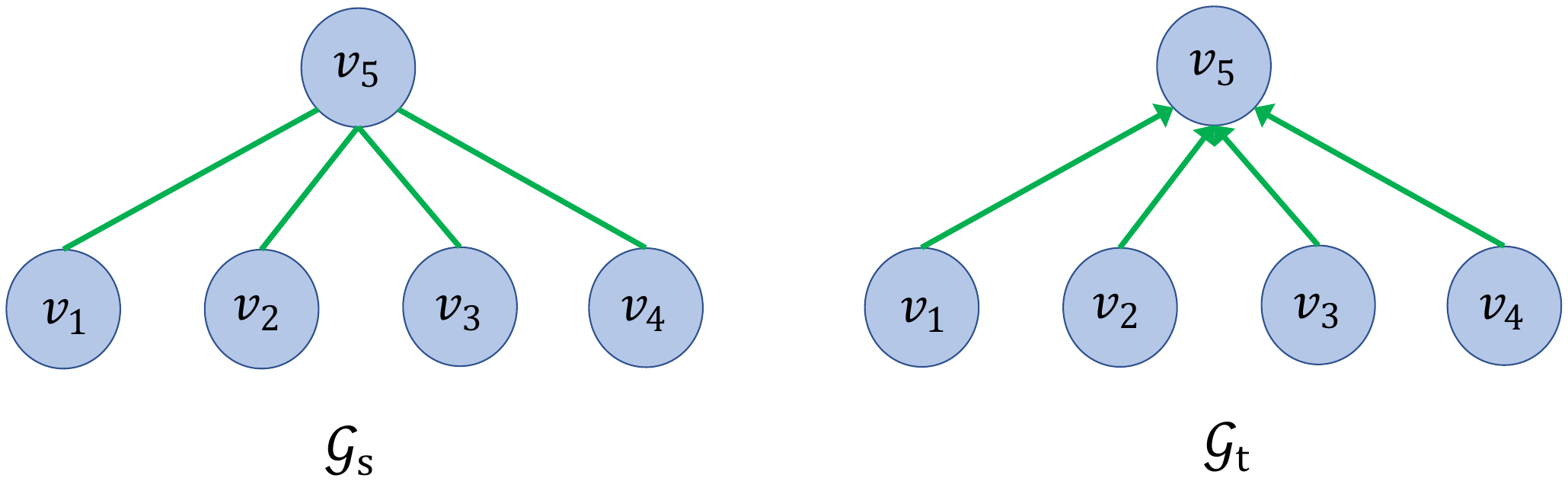}
  \caption{$d_{\Lp}=10$, $\SHD_1=4$, $\SHD_2=4$}
\end{subfigure}

\caption{Example graph pairs that exhibit large discrepancy between the model-oriented distance and the structural Hamming distance: (a-d) are CPDAGs, (e) are polytree MPDAGs.}
\label{fig:cpdag_results_4nodes}
\end{figure}

\end{appendix}

\end{document}